\newcommand{\nc}{\newcommand}
\nc{\Ls}{{L^\star}}
\nc{\myx}{{\mathtt x}}
\nc{\myy}{{\mathtt{y}}}
\nc{\pr}{{\rm pr}}
\nc{\bfeta}{{\boldsymbol \eta}}
\nc{\lp}{{\Big(}}
\nc{\rp}{{\Big)}}
\nc{\Lp}{{\bigg(}}
\nc{\Rp}{{\bigg)}}
\def\lrp#1#2{\lp \genfrac{}{}{0pt}{}{#1}{#2} \rp}
\def\Lrp#1#2{\Lp \genfrac{}{}{0pt}{}{#1}{#2} \Rp}
\def\lrpT#1#2{\lp \genfrac{}{}{0pt}{}{#1}{#2}\Big| T \rp}
\def\lrpTZ#1#2{\lp \genfrac{}{}{0pt}{}{#1}{#2}\Big| 0 \rp}
\def\LrpTZ#1#2{\Lp \genfrac{}{}{0pt}{}{#1}{#2}\bigg| 0 \Rp}
\def\KeyWord#1{$\backslash$\IfColor{$\!\!$\textRed{#1}\textBlack}{#1}$\!\!$}
\DeclareFontFamily{U}{shuffle}{}
\DeclareFontShape{U}{shuffle}{m}{n}{ <-8>shuffle7 <8->shuffle10}{}
\theoremstyle{plain}
\newtheorem{thm}{Theorem}[section]
\newtheorem{lem}[thm]{Lemma}
\newtheorem{prop}[thm]{Proposition}
\newtheorem{lem-defn}[thm]{Lemma-Definition}
\newtheorem{conj}[thm]{Conjecture}
\theoremstyle{definition}
\newtheorem{rem}[thm]{Remark}
\newtheorem{eg}[thm]{Example}
\DeclareMathOperator{\Li}{{Li}}
\DeclareMathOperator{\dep}{{dp}}
\DeclareMathOperator{\ord}{{ord}}
\nc{\setA}{{\mathsf A}}
\nc{\setB}{{\mathsf B}}
\nc{\setC}{{\mathsf C}}
\nc{\setD}{{\mathsf D}}
\nc{\setE}{{\mathsf E}}
\nc{\setF}{{\mathsf F}}
\nc{\setG}{{\mathsf G}}
\nc{\setH}{{\mathsf{H}}}
\nc{\setI}{{\mathsf I}}
\nc{\setJ}{{\mathsf J}}
\nc{\setK}{{\mathsf K}}
\nc{\setL}{{\mathsf L}}
\nc{\setM}{{\mathsf M}}
\nc{\setN}{{\mathsf N}}
\nc{\setO}{{\mathsf O}}
\nc{\setP}{{\mathsf P}}
\nc{\setQ}{{\mathsf Q}}
\nc{\setR}{{\mathsf R}}
\nc{\setS}{{\mathsf S}}
\nc{\setT}{{\mathsf T}}
\nc{\setU}{{\mathsf U}}
\nc{\setV}{{\mathsf V}}
\nc{\setW}{{\mathsf W}}
\nc{\setX}{{\mathsf X}}
\nc{\setY}{{\mathsf Y}}
\nc{\setZ}{{\mathsf Z}}
\nc{\z}{{\texttt{z}}}
\nc{\exref}[1]{\chapref\ref{#1}}
\nc{\per}[1]{\underset{#1}{\boldsymbol \pi}\,}
\nc{\QSym}{{\mathsf{QSym}}}
\nc{\tla}{\overset{\leftarrow}}
\nc{\MT}{{\rm MT}}
\nc{\XX}{{X}}
\nc{\gF}{{\varPhi}}
\nc{\gL}{{\Lambda}}
\nc{\ot}{\otimes}
\nc{\dual}{\ast}
\nc{\wht}{\widehat}
\nc{\bwg}{{\bigwedge}}
\nc{\wg}{{\wedge}}
\nc{\mmu}{{\boldsymbol{\mu}}}
\nc{\mal}{{{\scstl \maltese}}}
\nc{\fA}{{\mathfrak A}}
\nc{\hfA}{{\widehat{\mathfrak A}}}
\nc{\HH}{{\mathbb H}}
\nc{\ra}{\rightarrow}
\nc{\os}{{\overset}}
\nc{\GG}{{\mathbb G}}
\nc{\F}{{\mathbb F}}
\nc{\Z}{{\mathbb Z}}
\nc{\R}{{\mathbb R}}
\nc{\N}{{\mathbb N}}
\nc{\ZN}{{{\mathbb N}_0}}
\nc{\Q}{{\mathbb Q}}
\nc{\CC}{{\mathbb C}}
\nc{\V}{{\mathbb V}}
\nc{\CP}{{\mathbb{CP}}}
\nc{\Cnn}{{\mathbb C}_{\ge 0}}
\nc{\Cp}{{\mathbb C}_{>0}}
\nc{\MHSS}{MH${}^\star$S\xspace}
\nc{\MHSSs}{MH${}^\star$Ss\xspace}
\nc{\MZSV}{MZ${}^\star$V\xspace}
\nc{\MZSVs}{MZ${}^\star$Vs\xspace}
\nc{\FMZSV}{FMZ${}^\star$V\xspace}
\nc{\FMZSVs}{FMZ${}^\star$Vs\xspace}
\nc{\FESS}{FE${}^\star$S\xspace}
\nc{\FESSs}{FE${}^\star$Ss\xspace}
\nc{\DSh}{{\mathsf{DSh}}}
\nc{\ShC}{{\mathsf{ShC}}}
\nc{\MZV}{{\mathsf{MZ}}}
\nc{\SCV}{{\mathsf{SCV}}}
\nc{\nSCV}{{\widetilde{\mathsf{SCV}}}}
\nc{\FCV}{{\mathsf{FCV}}}
\nc{\qMZ}{{q\mathsf{MZ}}}
\nc{\FMZ}{{\mathsf{FMZ}}}
\nc{\CMZV}{{\mathsf{CMZV}}}
\nc{\ES}{{\mathsf{ES}}}
\nc{\FES}{{\mathsf{FES}}}
\nc{\qMZV}{\mathsf{qMZV}}
\nc{\grqMZV}{\mathsf{grqMZV}}
\nc{\gemn}{{\mathfrak n}}
\nc{\wtcalM}{{\widetilde\calM}}
\nc{\gam}{{\gamma}}
\nc{\gG}{{\Gamma}}
\nc{\om}{{\omega}}
\nc{\vep}{{\varepsilon}}
\nc{\ga}{{\alpha}}
\nc{\gl}{{\lambda}}
\nc{\gb}{{\beta}}
\nc{\gd}{{\delta}}
\nc{\gf}{{\varphi}}
\nc{\orgd}{{\vec \gd\,}}
\nc{\gs}{{\sigma}}
\nc{\gth}{{\theta}}
\nc{\gS}{{\Sigma}}
\nc{\gk}{{\kappa}}
\nc{\tgz}{{\tilde{\zeta}}}
\nc{\gO}{{\Omega}}
\nc{\sif}{{\mathcal S}}
\nc{\gt}{{\tau}}
\nc{\Lra}{\Longrightarrow}
\nc{\lra}{\longrightarrow}
\nc{\lmaps}{\longmapsto}
\nc{\fS}{{\mathsf S}}
\nc{\DD}{{\mathfrak D}}
\nc{\Llra}{\Longleftrightarrow}
\nc{\ol}{\overline}
\def\ola#1{\overset{\text{\raisebox{-4pt}{$\scriptscriptstyle \leftarrow$}}}{#1}}
\nc{\zq}{{\zeta_q}}
\nc{\us}{\underset}
\nc{\tn}{{\tilde{n}}}
\nc{\gD}{{\Delta}}
\nc{\bi}{{\bf i}}
\nc{\bfone}{{\bf 1}}
\nc{\bfzero}{{\bf 0}}
\nc{\bftwo}{{\bf 2}}
\nc{\bfa}{{\bf a}}
\nc{\bfb}{{\bf b}}
\nc{\bfc}{{\bf c}}
\nc{\bfd}{{\bf d}}
\nc{\bfe}{{\bf e}}
\nc{\bferev}{\ola{\bf e}}
\nc{\bff}{{\bf f}}
\nc{\bfg}{{\bf g}}
\nc{\bfh}{{\bf h}}
\nc{\bfi}{{\bf i}}
\nc{\bfj}{{\bf j}}
\nc{\bfjrev}{\ola{\bf j}}
\nc{\obfi}{{\overrightarrow{\boldsymbol \imath}}}
\nc{\obfj}{{\overrightarrow{\boldsymbol \jmath}}}
\nc{\obfk}{{\overrightarrow{\bf k}}}
\nc{\veps}{{\varepsilon}}
\nc{\bfn}{{\bf n}}
\nc{\bfl}{{\bf l}}
\nc{\bfk}{{\bf k}}
\nc{\bfm}{{\bf m}}
\nc{\bfo}{{\bf o}}
\nc{\bfp}{{\bf p}}
\nc{\bfq}{{\bf q}}
\nc{\bfr}{{\bf r}}
\nc{\bfs}{{\bf s}}
\nc{\bft}{{\bf t}}
\nc{\bfu}{{\bf u}}
\nc{\bfv}{{\bf v}}
\nc{\bfw}{{\bf w}}
\nc{\bfx}{{\bf x}}
\nc{\bfy}{{\bf y}}
\nc{\bfz}{{\bf z}}
\nc{\bfA}{{\bf A}}
\nc{\bfB}{{\bf B}}
\nc{\bfC}{{\bf C}}
\nc{\bfD}{{\bf D}}
\nc{\bfE}{{\bf E}}
\nc{\bfF}{{\bf F}}
\nc{\bfG}{{\bf G}}
\nc{\bfH}{{\bf H}}
\nc{\bfI}{{\bf I}}
\nc{\bfJ}{{\bf J}}
\nc{\bfK}{{\bf K}}
\nc{\bfL}{{\bf L}}
\nc{\bfM}{{\bf M}}
\nc{\bfN}{{\bf N}}
\nc{\bfO}{{\bf O}}
\nc{\bfP}{{\bf P}}
\nc{\bfQ}{{\bf Q}}
\nc{\bfR}{{\bf R}}
\nc{\bfS}{{\bf S}}
\nc{\bfT}{{\bf T}}
\nc{\bfU}{{\bf U}}
\nc{\bfV}{{\bf V}}
\nc{\bfW}{{\bf W}}
\nc{\bfX}{{\bf X}}
\nc{\bfY}{{\bf Y}}
\nc{\bfZ}{{\bf Z}}
\nc{\bbA}{{\mathbb A}}
\nc{\bbB}{{\mathbb B}}
\nc{\bbC}{{\mathbb C}}
\nc{\bbD}{{\mathbb D}}
\nc{\bbE}{{\mathbb E}}
\nc{\bbF}{{\mathbb F}}
\nc{\bbG}{{\mathbb G}}
\nc{\bbH}{{\mathbb H}}
\nc{\bbI}{{\mathbb I}}
\nc{\bbJ}{{\mathbb J}}
\nc{\bbK}{{\mathbb K}}
\nc{\bbL}{{\mathbb L}}
\nc{\bbM}{{\mathbb M}}
\nc{\bbN}{{\mathbb N}}
\nc{\bbO}{{\mathbb O}}
\nc{\bbP}{{\mathbb P}}
\nc{\bbQ}{{\mathbb Q}}
\nc{\bbR}{{\mathbb R}}
\nc{\bbS}{{\mathbb S}}
\nc{\bbT}{{\mathbb T}}
\nc{\bbU}{{\mathbb U}}
\nc{\bbV}{{\mathbb V}}
\nc{\bbW}{{\mathbb W}}
\nc{\bbX}{{\mathbb X}}
\nc{\bbY}{{\mathbb Y}}
\nc{\bbZ}{{\mathbb Z}}
\nc{\bba}{{\mathbb a}}
\nc{\bbb}{{\mathbb b}}
\nc{\bbc}{{\mathbb c}}
\nc{\bbd}{{\mathbb d}}
\nc{\bbe}{{\mathbb e}}
\nc{\bbf}{{\mathbb f}}
\nc{\bbg}{{\mathbb g}}
\nc{\bbh}{{\mathbb h}}
\nc{\bbi}{{\mathbb i}}
\nc{\bbk}{{\mathbb K}}
\nc{\bbl}{{\mathbb l}}
\nc{\bbm}{{\mathbb m}}
\nc{\bbn}{{\mathbb n}}
\nc{\bbo}{{\mathbb o}}
\nc{\bbp}{{\mathbb p}}
\nc{\bbq}{{\mathbb q}}
\nc{\bbr}{{\mathbb r}}
\nc{\bbs}{{\mathbb s}}
\nc{\bbt}{{\mathbb t}}
\nc{\bbu}{{\mathbb u}}
\nc{\bbv}{{\mathbb v}}
\nc{\bbw}{{\mathbb w}}
\nc{\bbx}{{\mathbb x}}
\nc{\bby}{{\mathbb y}}
\nc{\bbz}{{\mathbb z}}
\nc{\calA}{{\mathcal A}}
\nc{\calB}{{\mathcal B}}
\nc{\calC}{{\mathcal C}}
\nc{\calD}{{\mathcal D}}
\nc{\calE}{{\mathcal E}}
\nc{\calF}{{\mathcal F}}
\nc{\calG}{{\mathcal G}}
\nc{\calH}{{\mathcal H}}
\nc{\calI}{{\mathcal I}}
\nc{\calJ}{{\mathcal J}}
\nc{\calK}{{\mathcal K}}
\nc{\calL}{{\mathcal L}}
\nc{\calM}{{\mathcal M}}
\nc{\calN}{{\mathcal N}}
\nc{\calO}{{\mathcal O}}
\nc{\calP}{{\mathcal P}}
\nc{\calQ}{{\mathcal Q}}
\nc{\calR}{{\mathcal R}}
\nc{\calS}{{\mathcal S}}
\nc{\calT}{{\mathcal T}}
\nc{\calU}{{\mathcal U}}
\nc{\calV}{{\mathcal V}}
\nc{\calW}{{\mathcal W}}
\nc{\calX}{{\mathcal X}}
\nc{\calY}{{\mathcal Y}}
\nc{\calZ}{{\mathcal Z}}
 \nc{\cala}{{\mathcal a}}
\nc{\calb}{{\mathcal b}}
\nc{\calc}{{\mathcal c}}
\nc{\cald}{{\mathcal d}}
\nc{\cale}{{\mathcal e}}
\nc{\calf}{{\mathcal f}}
\nc{\calg}{{\mathcal g}}
\nc{\calh}{{\mathcal h}}
\nc{\cali}{{\mathcal i}}
\nc{\calj}{{\mathcal j}}
\nc{\calk}{{\mathcal k}}
\nc{\call}{{\mathcal l}}
\nc{\calm}{{\mathcal m}}
\nc{\caln}{{\mathcal n}}
\nc{\calo}{{\mathcal o}}
\nc{\calp}{{\mathcal p}}
\nc{\calq}{{\mathcal q}}
\nc{\calr}{{\mathcal r}}
\nc{\cals}{{\mathcal s}}
\nc{\calt}{{\mathcal t}}
\nc{\calu}{{\mathcal u}}
\nc{\calv}{{\mathcal v}}
\nc{\calw}{{\mathcal w}}
\nc{\calx}{{\mathcal x}}
\nc{\caly}{{\mathcal y}}
\nc{\calz}{{\mathcal z}}
\nc{\frakA}{{\mathfrak A}}
\nc{\frakB}{{\mathfrak B}}
\nc{\frakC}{{\mathfrak C}}
\nc{\frakD}{{\mathfrak D}}
\nc{\frakE}{{\mathfrak E}}
\nc{\frakF}{{\mathfrak F}}
\nc{\frakG}{{\mathfrak G}}
\nc{\frakH}{{\mathfrak H}}
\nc{\frakI}{{\mathfrak I}}
\nc{\frakJ}{{\mathfrak J}}
\nc{\frakK}{{\mathfrak K}}
\nc{\frakL}{{\mathfrak L}}
\nc{\frakM}{{\mathfrak M}}
\nc{\frakN}{{\mathfrak N}}
\nc{\frakO}{{\mathfrak O}}
\nc{\frakP}{{\mathfrak P}}
\nc{\frakQ}{{\mathfrak Q}}
\nc{\frakR}{{\mathfrak R}}
\nc{\frakS}{{\mathfrak S}}
\nc{\frakT}{{\mathfrak T}}
\nc{\frakU}{{\mathfrak U}}
\nc{\frakV}{{\mathfrak V}}
\nc{\frakW}{{\mathfrak W}}
\nc{\frakX}{{\mathfrak X}}
\nc{\frakY}{{\mathfrak Y}}
\nc{\frakZ}{{\mathfrak Z}}
\nc{\fraka}{{\mathfrak a}}
\nc{\frakb}{{\mathfrak b}}
\nc{\frakc}{{\mathfrak c}}
\nc{\frakd}{{\mathfrak d}}
\nc{\frake}{{\mathfrak e}}
\nc{\frakf}{{\mathfrak f}}
\nc{\frakg}{{\mathfrak g}}
\nc{\frakh}{{\mathfrak h}}
\nc{\fraki}{{\mathfrak i}}
\nc{\frakj}{{\mathfrak j}}
\nc{\frakk}{{\mathfrak k}}
\nc{\frakl}{{\mathfrak l}}
\nc{\frakm}{{\mathfrak m}}
\nc{\frakn}{{\mathfrak n}}
\nc{\frako}{{\mathfrak o}}
\nc{\frakp}{{\mathfrak p}}
\nc{\frakq}{{\mathfrak q}}
\nc{\frakr}{{\mathfrak r}}
\nc{\fraks}{{\mathfrak s}}
\nc{\frakt}{{\mathfrak t}}
\nc{\fraku}{{\mathfrak u}}
\nc{\frakv}{{\mathfrak v}}
\nc{\frakw}{{\mathfrak w}}
\nc{\frakx}{{\mathfrak x}}
\nc{\fraky}{{\mathfrak y}}
\nc{\frakz}{{\mathfrak z}}
\nc{\so}{{\mathfrak so}}
\nc{\slfour}{{\mathfrak sl}_4}
\nc{\one}{{\bf 1}}
\nc{\zero}{{\bf 0}}
\nc{\sha}{\shuffle}
\nc{\inj}{\hookrightarrow}
\nc{\zetas}{{\zeta^\star}}
\nc{\DLN}{{\mathsf{DivLog}_\N}}
\nc{\DLD}{{\mathsf{DivLog}_D}}
\nc{\invdots}{{.\text{\raisebox{0.2ex}{$\cdot$}} \text{\raisebox{0.9ex}{$\cdot$}} }}
\nc{\Sy}{{\mathcal S}}
\nc{\inv}{{\rm inv}}
\nc{\Rac}{{\mathcal R}}
\nc{\dd}{{\mathfrak d}}
\nc{\shD}{{\mbox{\cyr D}}}
\nc{\tz}{\tilde{\zeta}}
\nc{\x}{{\mathtt{x}}}
\nc{\y}{{\mathtt{y}}}
\nc{\Qxy}{\Q\langle \x,\y\rangle}
\nc{\recurO}{{{\mathfrak D}_{z,\barz}}}
\nc{\recurI}{{{\mathfrak I}_{z,\barz}}}
\nc{\app}{{\sharp}}
\nc{\barz}{{\bar{z}}}
\nc{\bB}{{\mathsf{B}}}
\nc{\angX}{{\langle\!\langle \setX\rangle\!\rangle}}
\nc{\revs}{\ola}
\nc{\res}{{\rm{res}}}
\nc{\sv}{{\rm{sv}}}
\nc{\tdx}{{\y}}
\nc{\setLs}{{\mathsf{Ls}}}
\nc{\eps}{{\epsilon}}
\nc{\lsetH}{{\hat{\setH}}}
\nc{\Sooo}{\mathsf{S}^0_{0,0}\,}
\nc{\Solo}{\mathsf{S}^0_{1,0}\,}
\nc{\Sool}{\mathsf{S}^0_{0,1}\,}
\nc{\Soll}{\mathsf{S}^0_{1,1}\,}
\nc{\Sllo}{\mathsf{S}^1_{1,0}\,}
\nc{\Slol}{\mathsf{S}^1_{0,1}\,}
\nc{\Sloo}{\mathsf{S}^1_{0,0}\,}
\nc{\Slll}{\mathsf{S}^1_{1,1}\,}
\nc{\Stot}{\mathsf{S}_\setX}
\nc{\Stotnx}{\mathsf{S}}
\nc{\Zooo}{\Phi^0_{0,0}\,}
\nc{\Zolo}{\Phi^0_{1,0}\,}
\nc{\Zool}{\Phi^0_{0,1}\,}
\nc{\Zloo}{\Phi^1_{0,0}\,}
\nc{\Zllo}{\Phi^1_{1,0}\,}
\nc{\Zlol}{\Phi^1_{0,1}\,}
\nc{\Zoll}{\Phi^0_{1,1}\,}
\nc{\vooo}{V^0_{0,0}\,}
\nc{\volo}{V^0_{1,0}\,}
\nc{\vool}{V^0_{0,1}\,}
\nc{\vloo}{V^1_{0,0}\,}
\nc{\vllo}{V^1_{1,0}\,}
\nc{\vlol}{V^1_{0,1}\,}
\nc{\voll}{V^0_{1,1}\,}
\nc{\vlll}{V^1_{1,1}\,}
\nc{\Fo}{F}
\nc{\lSooo}{\hat{\mathsf{S}}^0_{0,0}\,}
\nc{\lSolo}{\hat{\mathsf{S}}^0_{1,0}\,}
\nc{\lSool}{\hat{\mathsf{S}}^0_{0,1}\,}
\nc{\lSoll}{\hat{\mathsf{S}}^0_{1,1}\,}
\nc{\lSllo}{\hat{\mathsf{S}}^1_{1,0}\,}
\nc{\lSlol}{\hat{\mathsf{S}}^1_{0,1}\,}
\nc{\lSloo}{\hat{\mathsf{S}}^1_{0,0}\,}
\nc{\lSlll}{\hat{\mathsf{S}}^1_{1,1}\,}
\nc{\lStot}{{\hat{\mathsf{S}}_\setX}}
\nc{\lStotnx}{\hat{\mathsf{S}}}
\nc{\lZooo}{\hat{\Phi}^0_{0,0}\,}
\nc{\lZolo}{\hat{\Phi}^0_{1,0}\,}
\nc{\lZool}{\hat{\Phi}^0_{0,1}\,}
\nc{\lZloo}{\hat{\Phi}^1_{0,0}\,}
\nc{\lZllo}{\hat{\Phi}^1_{1,0}\,}
\nc{\lZlol}{\hat{\Phi}^1_{0,1}\,}
\nc{\lvooo}{\hat{V}^0_{0,0}\,}
\nc{\lvolo}{\hat{V}^0_{1,0}\,}
\nc{\lvool}{\hat{V}^0_{0,1}\,}
\nc{\lvloo}{\hat{V}^1_{0,0}\,}
\nc{\lvllo}{\hat{V}^1_{1,0}\,}
\nc{\lvlol}{\hat{V}^1_{0,1}\,}
\nc{\lvoll}{\hat{V}^0_{1,1}\,}
\nc{\lvlll}{\hat{V}^1_{1,1}\,}
\nc{\lZo}{\hat{\Phi}_0}
\nc{\lZp}{\hat{\Phi}_{\pi}}
\nc{\lZH}{\hat{\Phi}_{H}}
\nc{\lZs}{\hat{\Phi}_{s}}
\nc{\lF}{\hat{F}}
\nc{\lH}{{\hat{H}}}
\nc{\gFF}{{\chi}}
\nc{\gX}{{\varPsi}}
\nc{\gXs}{\gX^\star}
\nc{\cv}{{\rm cv}}
\nc{\myone}{{1}}%{{\mathtt{e}}}
\nc{\whH}{\widehat{H}}
\nc{\whR}{{R}}
\nc{\KZ}{{\rm KZ}}
\nc{\even}{{\rm ev}}
\nc{\odd}{{\rm od}}
\nc{\na}{\natural}
\nc{\tT}{\tilde{T}}
\nc{\db}{{\mathbb D}}
\nc{\BTF}{{F}}
\nc{\gbb}{{\beta}}
\nc{\bfzt}{{{\boldsymbol \zeta}}}
\nc{\tf}{{\tilde{f}}}
\nc{\sHq}{{\mathfrak h}}
\nc{\tHq}{{\tilde{H}}}
\nc{\stHq}{{\tilde{\sHq}}}
\nc{\hHq}{{\hat{H}}}
\nc{\shHq}{{\hat{\sHq}}}
\nc{\bigcc}{\operatorname{\phantom{a} \text{\raisebox{1pt}{$\scstl \circ$}}\hskip-2.2ex\bigsqcup}}
\nc{\hone}{{\widehat{1}}}
\nc{\genf}{\genfrac{[}{]}{0pt}{}}
\nc{\oll}[1]{\underline{#1}}
\nc{\hyf}{{\text{-}}}
\nc{\bt}{{\bf 2}}
\nc{\vbfp}{{\underline{\bfp}}}
\nc{\vbfs}{{\underline{\bfs}}}
\nc{\vbfm}{{\underline{\bfm}}}
\nc{\vbfw}{{\underline{\bfw}}}
\nc{\wbfp}{{\widetilde{\bfp}}}
\nc{\wbfs}{{\widetilde{\bfs}}}
\nc{\wbfw}{{\widetilde{\bfw}}}
\nc{\wbfm}{{\widetilde{\bfm}}}
\nc{\wdt}{{\widetilde{t}}}
\nc{\wdl}{{\widetilde{\gl}}}
\nc{\wdp}{{\widetilde{p}}}
\nc{\wwbfw}{{\overset{\text{\raisebox{-2pt}{$\approx$}}}{\bfw}}}
\nc{\wwbfp}{{\overset{\text{\raisebox{-2pt}{$\approx$}}}{\bfp}}}
\nc{\wwbfs}{{\overset{\text{\raisebox{-2pt}{$\approx$}}}{\bfs}}}
\nc{\wwbfm}{{\overset{\text{\raisebox{-2pt}{$\approx$}}}{\bfm}}}
\nc{\wwdp}{{\overset{\text{\raisebox{-2pt}{$\approx$}}}{\!p}}}
\nc{\wwdl}{{\overset{\text{\raisebox{-2pt}{$\approx$}}}{\!\gl}}}
\nc{\tb}{{\tilde{b}}}
\nc{\tB}{{\widetilde{B}}}
\nc{\tX}{{\widetilde{X}}}
\nc{\tY}{{\widetilde{Y}}}
\nc{\tbfs}{{\tilde{\bfs}}}
\nc{\tbft}{{\tilde{\bft}}}
\nc{\tbfu}{{\tilde{\bfu}}}
\nc{\ttbfs}{{\hat{\bfs}}}
\nc{\ttbft}{{\hat{\bft}}}
\nc{\ttbfu}{{\hat{\bfu}}}
\nc{\rrho}{{\hat{\rho}}}
\nc{\ggk}{{\hat{\gk}}}
\nc{\ggs}{{\hat{\gs}}}
\nc{\oI}{{\overline{I}}}
\nc{\bI}{{\bar{I}}}
\nc{\bJ}{{\bar{J}}}
\nc{\bK}{{\bar{K}}}
\nc{\bfgb}{{\boldsymbol \gb}}
\nc{\bfgl}{{\boldsymbol \gl}}
\nc{\wbfgl}{{\widetilde{\bfgl}}}
\nc{\wwbfgl}{{\overset{\text{\raisebox{-2pt}{$\approx$}}}{\bfgl}}}
\nc{\bfga}{{\boldsymbol \ga}}
\nc{\bfgs}{{\boldsymbol \gs}}
\nc{\bfxi}{{\boldsymbol \xi}}
\nc{\bfmu}{{\boldsymbol \mu}}
\nc{\bfnu}{{\boldsymbol \nu}}
\nc{\bftau}{{\boldsymbol \tau}}
\nc{\bfchi}{{\boldsymbol \chi}}
\nc{\Czeta}{{\calC}}
\nc{\SC}{{S}}
\nc{\htt}{{\rm ht}}
\nc{\uar}{{\uparrow}}
\nc{\dar}{{\downarrow}}
\nc{\RC}{{R}}
\nc{\QX}{{\Q\langle \setX\rangle}}
\nc{\QY}{{\Q\langle \setY\rangle}}
\nc{\CX}{{\CC\langle \setX\rangle}}
\nc{\CY}{{\CC\langle \setY\rangle}}
\nc{\RX}{{R\langle \setX\rangle}}
\nc{\RY}{{R\langle \setY\rangle}}
\nc{\QXX}{{\Q\langle\!\langle \setX\rangle\!\rangle}}
\nc{\QYY}{{\Q\langle\!\langle \setY\rangle\!\rangle}}
\nc{\CXX}{{\CC\langle\!\langle \setX\rangle\!\rangle}}
\nc{\CYY}{{\CC\langle\!\langle \setY\rangle\!\rangle}}
\nc{\RXX}{{R\langle\!\langle \setX\rangle\!\rangle}}
\nc{\RYY}{{R\langle\!\langle \setY\rangle\!\rangle}}
\nc{\lXr}{{\langle\!\langle \setX\rangle\!\rangle}}
\nc{\lYr}{{\langle\!\langle \setY\rangle\!\rangle}}
\nc{\Cat}[2]{\mathop{\mathrm{\bf Cat}}\limits_{#1}^{#2}}
\nc{\ones}{\{1\}}
\nc{\ud}{{\rm d}}
\newcommand {\be}{\myone} %{\mathbf{1}}
\title{Finite and symmetrized colored multiple zeta values}
\author[J.~Singer]{Johannes Singer}
\address{Department Mathematik, Friedrich-Alexander-Universit\"at Erlangen-N\"urnberg, Cauerstra\ss e 11, 91058 Erlangen, Germany}
\email{singer@math.fau.de}
\urladdr{www.math.fau.de/singer}
\author[J.~Zhao]{Jianqiang Zhao}
\address{ICMAT, C/Nicol\'as Cabrera, no.~13-15, 28049 Madrid, Spain}
\email{zhaoj@ihes.fr}
\urladdr{}
\date{}
\begin{document}

\maketitle

\begin{abstract}
Colored multiple zeta values are special values of multiple polylogarithms evaluated at $N$th roots of unity.
In this paper, we define both the finite and the symmetrized versions of these values and
show that they both satisfy the double shuffle relations.
Further, we provide strong evidence for an isomorphism connecting the two spaces
generated by these two kinds of values.
This is a generalization of a recent work of Kaneko and Zagier on finite and symmetrized multiple zeta
values and of the second author on finite and symmetrized Euler sums.
\end{abstract}

\tableofcontents

%%%%%%%%%%%%%%%%%%%%%%%%%%%%%%%%%%%%%%%%%%%%%%%%%%%%%%%%%%%%%%%%%%%%%%%%%%%%

\section{Introduction}
\label{sec:Intro}

Let $\gG_N$ be the group of $N$th roots of unity. For $\bfs:=(s_1,\dots,s_d) \in \mathbb{N}^d$ and $\bfeta:=(\eta_1,\dots,\eta_d)\in (\gG_N)^d$, we define
\emph{colored multiple zeta values} (CMZVs) by
\begin{align}\label{def:CMZV}
\zeta\lrp{\bfs}{\bfeta}:=\sum_{k_1>k_2>\cdots>k_d>0}
\frac{\eta_1^{k_1}\cdots\eta_d^{k_d}}{k_1^{s_1}\cdots k_d^{s_d}}.
\end{align}
We call $d$ the \emph{depth} and $|\bfs|:=s_1+\dots+s_d$ the \emph{weight}.
These objects were first systematically studied by Deligne, Goncharov, Racinet,
Arakawa and Kaneko \cite{Arakawa99,Deligne05,Goncharov01,Racinet02}.

It is not hard to see that the series in \eqref{def:CMZV} diverge if and only if $(s_1,\eta_1)=(1,1)$. By multiplying these series we get the so called stuffle (or quasi-shuffle) relations. Additionally, it turns out that these values can also be expressed by iterated integrals
which lead to the shuffle relations. Note that for $N=1$ we rediscover \emph{multiple zeta values} (MZVs)
\begin{align}\label{def:MZV}
 \zeta(\bfs):= \sum_{k_1>k_2>\cdots>k_d>0}
\frac{1}{k_1^{s_1}\cdots k_d^{s_d}} = \zeta\lrp{\bfs}{\{1\}^d},
\end{align}
where $\{s\}^d:=(s,\ldots,s)\in \mathbb{N}^d$. When $N=2$ the colored multiple zeta values are usually called \emph{Euler sums}.

In \cite{Ihara06} Ihara, Kaneko and Zagier defined the regularized MZVs in two different ways and then obtained the regularized double shuffle relations. Racinet \cite{Racinet02} and Arakawa and Kaneko \cite{Arakawa04} 
further generalized these regularized values to arbitrary levels, which we denote by $\displaystyle \zeta_\ast\lrpT{\bfs}{\bfeta}$ and $\displaystyle\zeta_\sha\lrpT{\bfs}{\bfeta}$, 
which are in general polynomials of $T$.  Using these values,
we define the \emph{symmetrized colored multiple zeta values} (SCVs) of level $N$ by
\begin{align}
\zeta_\ast^\Sy\lrp{\bfs}{\bfeta}:=&\, \sum_{j=0}^d
 (-1)^{s_1+\cdots+s_j} \ol{\eta_1}\cdots\ol{\eta_j}\,\zeta_\ast\lrpT{s_j,\ldots,s_1}{\ol{\eta_j},\ldots,\ol{\eta_1}} \zeta_\ast\lrpT{s_{j+1},\dots,s_d}{\eta_{j+1},\dots,\eta_d}, \label{equ:astSCV}\\
\zeta_\sha^\Sy\lrp{\bfs}{\bfeta}:=&\, \sum_{j=0}^d
 (-1)^{s_1+\dots+s_j} \ol{\eta_1}\cdots\ol{\eta_j}\,\zeta_\sha\lrpT{s_j,\dots,s_1}{\ol{\eta_j},\dots,\ol{\eta_1}} \zeta_\sha\lrpT{s_{j+1},\dots,s_d}{\eta_{j+1},\dots,\eta_d}  \label{equ:shaSCV}
\end{align}
for all $\bfs\in \mathbb{N}^d$ and $\bfeta \in (\gG_N)^d$. This definition includes as special cases the \emph{symmetrized MZVs} (when $N=1$) introduced by Kaneko and Zagier (see also Jarossay \cite{Jarossay14}) and the \emph{symmetrized Euler sums} (when $N=2$) established
in \cite{Zhao15} by the second author.

We will show that SCVs are actually independent of $T$ (Proposition \ref{prop:SCVconst}) and the two versions are essentially the same modulo $\zeta(2)$ (Theorem \ref{theo:moduloz2}). Furthermore, we prove that they satisfy both the stuffle relations (Theorem \ref{thm:astSCVmorphism}) and the shuffle relations (Theorem \ref{thm:shuffleSCV})
by using two different Hopf algebra structures, respectively.

Let $\calP$ be the set of rational primes and $\F_p$ the finite field of $p$ elements. Set
\begin{align*}
\calP(N):=\{p\in \calP \colon p\equiv -1 \pmod{N} \},
\end{align*}
which is of infinite cardinality with density $1/\varphi(N)$
by Chebotarev's Density Theorem \cite{Tschebotareff26}, where $\varphi(N)$ is
Euler's totient function. Further, we define
\begin{equation*}
    \F_p[\xi_N]:=\frac{\F_p[X]}{(X^N-1)}=
    \left\{ \sum_{j=0}^{N-1} c_j \xi_N^j\colon c_0,\dots,c_{N-1}\in \F_p\right\},
\end{equation*}
where $\xi_N:=\xi_{N,p}$ is a fixed primitive root of $X^N-1\in\F_p[X]$. Moreover, we denote
\begin{equation*}
\calA(N):=\prod_{p\in\calP(N)} \F_p[\xi_N] \bigg/\bigoplus_{p\in\calP(N)} \F_p[\xi_N].
\end{equation*}
We usually remove the dependence of $\xi_N$ on $p$ by abuse of notation. For convenience,
we also identify
$\prod_{p\in\calP(N),p>k} \F_p[\xi_N] \bigg/\bigoplus_{p\in\calP(N),p>k} \F_p[\xi_N]$
with $\calA(N)$ by setting the components $a_p=0$ for all $p\le k$.

Now we define the \emph{finite colored multiple zeta values} (FCVs) of level $N$ by
\begin{equation}\label{equ:defnFCV}
\zeta_{\calA(N)} \lrp{\bfs}{\bfeta}:=\left( \sum_{p>k_1>k_2>\cdots>k_d>0}
\frac{\eta_1^{k_1}\cdots\eta_d^{k_d}}{k_1^{s_1}\cdots k_d^{s_d}}\right)_{p\in\calP(N)} \in \calA(N)
\end{equation}
for all $\bfs\in\N^d$ and $\bfeta\in(\gG_N)^d$. Again, this definition includes as special cases the \emph{finite MZVs} (when $N=1$) and the \emph{finite Euler sums} (when $N=2$).

\begin{rem}\label{rem:subtleDefn}
In fact, we have abused the notation in \eqref{equ:defnFCV}. All $\eta_j=\eta_{j,p}$ depend on $p$ so that by a fixed choice $\bfeta\in (\gG_N)^d$ we really mean a fixed choice
of $(e_1,\dots,e_d)\in(\Z/N\Z)^d$ independent of $p$ such that $\eta_{j,p}:=\xi_{N,p}^{e_j}$
for all $p$.
\end{rem}

Similar to SCVs, we shall show that FCVs satisfy both the stuffle and the shuffle relations in Theorem \ref{thm:stuffleFCV} and Theorem \ref{thm:shuffleFCV}, respectively.

The primary motivation for this paper is the following conjectural relation between
SCVs and FCVs. Let $\CMZV_{w,N}$ (resp.\ $\FCV_{w,N}$,  resp.\ $\SCV_{w,N}$)
be the $\Q(\gG_N)$-space generated by CMZVs (resp.\ FCVs, resp.\ SCVs) of weight $w$
and level $N$. Set $\nSCV_{0,N}:=\Q(\gG_N)$ and define
$\nSCV_{w,N}:=\SCV_{w,N}+2\pi i\, \SCV_{w-1,N}$ for all $w\ge 1$.

\begin{conj}\label{conj:Main}
Let $N\ge 3$ and weight $w\ge 1$. Then:
\begin{itemize}
  \item [\upshape{(i)}] $\nSCV_{w,N}=\CMZV_{w,N}$ as vector spaces over $\Q(\xi_N)$.
  \item [\upshape{(ii)}] We have a $\Q(\gG_N)$-algebra isomorphism
\begin{align*}
    f\colon \FCV_{w,N} &\, \overset{\sim}{\lra} \frac{\CMZV_{w,N}}{2\pi i\, \CMZV_{w-1,N}} \\
     \zeta_{\calA(N)}\lrp{\bfs}{\bfeta} &\,\lmaps \ \zeta_{\sha}^\Sy\lrp{\bfs}{\bfeta}.
\end{align*}
\end{itemize}
\end{conj}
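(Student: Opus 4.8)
The plan is to prove the two parts together, since part~(i) reduces the target in part~(ii) to a quotient of $\SCV_{w,N}$ itself: granting (i), and using $\SCV_{w-1,N}\subseteq\CMZV_{w-1,N}$, the quotient $\CMZV_{w,N}/2\pi i\,\CMZV_{w-1,N}$ is precisely the image of $\SCV_{w,N}$ there, so both sides of the claimed isomorphism are realizations of the \emph{same} symbolic index $\lrp{\bfs}{\bfeta}$ --- one in the finite ring $\calA(N)$, one in the archimedean quotient --- and the real content is that these two realizations carry identical $\Q(\gG_N)$-linear relations. Accordingly I would introduce the formal algebra $\calD_{w,N}$: the free $\Q(\gG_N)$-module on indices $\lrp{\bfs}{\bfeta}$ of weight $w$, divided by the regularized double shuffle relations. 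The stuffle and shuffle theorems already established for each family (Theorems~\ref{thm:stuffleFCV}, \ref{thm:shuffleFCV} for FCVs and Theorems~\ref{thm:astSCVmorphism}, \ref{thm:shuffleSCV} for SCVs) say exactly that $\lrp{\bfs}{\bfeta}\mapsto\zeta_{\calA(N)}\lrp{\bfs}{\bfeta}$ and $\lrp{\bfs}{\bfeta}\mapsto\zeta_\sha^\Sy\lrp{\bfs}{\bfeta}$ descend to $\Q(\gG_N)$-algebra surjections $\pi_{\mathrm{fin}}\colon\calD_{w,N}\twoheadrightarrow\FCV_{w,N}$ and $\pi_{\mathrm{sym}}\colon\calD_{w,N}\twoheadrightarrow\CMZV_{w,N}/2\pi i\,\CMZV_{w-1,N}$, and the desired $f$ is then $\pi_{\mathrm{sym}}\circ\pi_{\mathrm{fin}}^{-1}$.

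For the inclusion $\nSCV_{w,N}\subseteq\CMZV_{w,N}$ in part~(i), Proposition~\ref{prop:SCVconst} shows each SCV is independent of $T$, hence a $\Q(\gG_N)$-combination of products of CMZVs of total weight $w$, so $\SCV_{w,N}\subseteq\CMZV_{w,N}$; the summand $2\pi i\,\SCV_{w-1,N}\subseteq 2\pi i\,\CMZV_{w-1,N}\subseteq\CMZV_{w,N}$ by the convention that the weight-one period $2\pi i$ raises weight by one. The harder reverse inclusion I would obtain by inverting the symmetrization \eqref{equ:shaSCV}: modulo $2\pi i\,\CMZV_{w-1,N}$ its $j=0$ term returns $\zeta_\sha\lrpT{\bfs}{\bfeta}$, which reduces to $\zeta\lrp{\bfs}{\bfeta}$, while the terms with $j\ge1$ are combinations of products of strictly shorter regularized values. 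Filtering by depth, this exhibits the passage $\{\zeta\lrp{\bfs}{\bfeta}\}\to\{\zeta_\sha^\Sy\lrp{\bfs}{\bfeta}\}$ as an explicit $\Q(\gG_N)$-transition matrix whose leading part is invertible \emph{precisely because $N\ge3$} --- the conjugation pairing $\eta\leftrightarrow\ol\eta$ is nondegenerate there, in contrast to $N=1,2$. Inverting it would express each $\zeta\lrp{\bfs}{\bfeta}$ through the $\zeta_\sha^\Sy$ and $2\pi i$ times weight-$(w-1)$ terms, the latter handled by induction on $w$; but the nonsingularity of the full matrix, once the within-depth contributions of the $j\ge1$ products are included, is itself only conjectural for $N\ge3$.

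With (i) in hand, part~(ii) is formal down to one point. The map $f=\pi_{\mathrm{sym}}\circ\pi_{\mathrm{fin}}^{-1}$ is an algebra homomorphism for the stuffle product because both realizations multiply by stuffle (in $\calA(N)$ and on the symmetrized side), and the $\sha$- and $\ast$-symmetrizations agree modulo $\zeta(2)$, hence modulo $2\pi i\,\CMZV_{w-1,N}$, by Theorem~\ref{theo:moduloz2}, so the $\sha$-normalization in the statement is harmless. Its surjectivity is exactly the reverse inclusion of part~(i). The one nonformal input is that $\pi_{\mathrm{fin}}$ and $\pi_{\mathrm{sym}}$ have the \emph{same} kernel inside $\calD_{w,N}$: well-definedness of $f$ needs $\ker\pi_{\mathrm{fin}}\subseteq\ker\pi_{\mathrm{sym}}$, and injectivity needs the reverse inclusion.

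This kernel coincidence is the main obstacle, and I do not expect an unconditional proof. The cleanest sufficient statement is that the double shuffle relations are \emph{complete} on each side, i.e.\ that both $\pi_{\mathrm{fin}}$ and $\pi_{\mathrm{sym}}$ are isomorphisms with $\calD_{w,N}$ their common model; but completeness of double shuffle is open already for finite and symmetric MZVs ($N=1$) and for Euler sums ($N=2$), so there is no hope of circumventing it. The realistic route is a dimension count: one matches $\dim_{\Q(\gG_N)}\FCV_{w,N}$ with $\dim_{\Q(\gG_N)}\CMZV_{w,N}-\dim_{\Q(\gG_N)}\CMZV_{w-1,N}$ using the conjectural Deligne--Goncharov dimension formula for $\CMZV_{w,N}$ together with a parallel recursion for the finite side; equality of these dimensions would force the two kernels to agree and upgrade $f$ to an isomorphism. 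What can be secured unconditionally is the existence of the two surjections $\pi_{\mathrm{fin}}$ and $\pi_{\mathrm{sym}}$ from $\calD_{w,N}$ together with the $N\ge3$ invertibility of the leading transition matrix in part~(i); the well-definedness and injectivity of $f$ both collapse onto the kernel coincidence, and that is exactly where the low-weight numerical verification supplies the evidence advertised in the introduction.
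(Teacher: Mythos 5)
The statement you were given is Conjecture~\ref{conj:Main}; the paper does not prove it and does not claim to. What the paper actually supplies is the easy inclusion $\nSCV_{w,N}\subseteq\CMZV_{w,N}$, the double shuffle relations on both sides (Theorems~\ref{thm:stuffleFCV}, \ref{thm:shuffleFCV}, \ref{thm:astSCVmorphism}, \ref{thm:shuffleSCV}), the observation that (i) implies surjectivity of $f$, low-weight numerical verification at levels $3$ and $4$, and the remark that a real proof is expected to require a $p$-adic generalized Drinfeld associator. Your reduction to a formal double shuffle algebra $\calD_{w,N}$ with two realizations, with both well-definedness and injectivity of $f$ collapsing onto coincidence of the two kernels, is a reasonable formalization of what the paper's theorems establish, and your identification of that kernel coincidence as the open core is consistent with the authors' own assessment.

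However, the one step you claim to secure unconditionally --- invertibility of the ``leading transition matrix'' for the reverse inclusion in part (i) --- is false, and the error is concrete. In the symmetrization \eqref{equ:shaSCV} it is not only the $j=0$ term that has full depth: the $j=d$ term $(-1)^{|\bfs|}\ol{\eta_1}\cdots\ol{\eta_d}\,\zeta_\sha\lrpT{s_d,\dots,s_1}{\ol{\eta_d},\dots,\ol{\eta_1}}$ does as well. Hence the depth-graded part of the passage $\{\zeta\lrp{\bfs}{\bfeta}\}\to\{\zeta_\sha^\Sy\lrp{\bfs}{\bfeta}\}$ is $I+S$, where $S$ sends $\lrp{\bfs}{\bfeta}$ to $(-1)^{|\bfs|}\pr(\ol{\bfeta})\lrp{\revs{\bfs}}{\revs{\ol{\bfeta}}}$; since $\pr(\ol{\bfeta})\pr(\bfeta)=1$ one has $S^2=I$, so $I+S$ annihilates the whole $(-1)$-eigenspace of $S$ and is singular for every $N$ (already in depth $1$ the $2\times 2$ block on $\lrp{s}{\eta},\lrp{s}{\ol{\eta}}$ has determinant $1-\eta\ol{\eta}=0$). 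This is not a technicality: the paper's own discussion after Conjecture~\ref{conj:level4FCVbasis} shows $\SCV_{1,3}$ is spanned by the single element $(1-\xi_3^2)\zeta\lrp{1}{\xi_3}$ modulo $2\pi i$, strictly smaller than $\CMZV_{1,3}$, which is precisely why the conjecture is phrased with $\nSCV_{w,N}=\SCV_{w,N}+2\pi i\,\SCV_{w-1,N}$ rather than with $\SCV_{w,N}$ alone. The reverse inclusion in (i) therefore cannot be obtained by inverting any transition matrix; it requires showing that the part of $\CMZV_{w,N}$ killed by symmetrization is recaptured by $2\pi i$ times weight $w-1$, which is open (known at level $1$ only by Yasuda's theorem cited in the paper). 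So your proposal correctly maps the terrain of part (ii) but overstates what is provable in part (i); nothing in it goes beyond the conjectural status the paper assigns to the statement.
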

It follows from this conjecture that
\begin{align*}
\frac{\CMZV_{w,N}}{2\pi i\, \CMZV_{w-1,N}}
\cong \frac{\nSCV_{w,N}}{2\pi i\, \nSCV_{w-1,N}}
\cong \frac{\SCV_{w,N}}{\big(2\pi i\, \SCV_{w-1,N}+\zeta(2)\SCV_{w-2,N}\big)\cap \SCV_{w,N}}.
\end{align*}
So the map $f$ in Conjecture~\ref{conj:Main} (ii) is surjective. The analogous result
for MZVs at level $1$ corresponding to Conjecture~\ref{conj:Main} (i) has
been proved by Yasuda \cite{Yasuda2014}.
For numerical examples supporting Conjecture~\ref{conj:Main} at level 3 and 4, see Section \ref{sec:numex}.
This conjecture should be regarded as a generalization of the corresponding conjectures
of Kaneko and Zagier for the MZVs and of the second author for the Euler sums \cite{Zhao15},
where $2\pi i$ is replaced by $\zeta(2)$ since the MZVs and Euler sums are all real numbers.
We are sure the final proof of Conjecture~\ref{conj:Main} will need the $p$-adic version
of the generalized Drinfeld associators whose coefficients should satisfy the same algebraic
relations as those of CMZVs plus the equation ``$2\pi i=0$'' when level $N\ge 3$.

\bigskip
{\bf{Acknowledgements.}}
The authors would like to thank the ICMAT at Madrid, Spain, for its warm hospitality and
gratefully acknowledge the support by the Severo Ochoa Excellence Program.

\bigskip

%%%%%%%%%%%%%%%%%%%%%%%%%%%%%%%%%%%%%%%%%%%%%%%%%%%%%%%%%%%%%%%%%%%%%%%%%%%%

\section{Algebraic framework}
\label{sec:algframe}
The study of MZVs with  word algebras was initiated by Hoffman \cite{Hoffman97}
and generalized  by Racinet \cite{Racinet02} to deal with colored MZVs, which
we now review briefly.

Fix a positive integer $N$ as the level. Define the alphabet $X_N:=\{x_\eta\colon \eta \in \gG_N\cup \{0\}\}$ and
let $X_N^\ast$ be the set of words over $X_N$ including the empty word $\be$.
Denoted by $\fA_N$ the free noncommutative polynomial algebra in $X_N$, i.e., the algebra of words on $X_N^\ast$.
The \emph{weight} of a word $\bfw \in \fA_N$, denoted by $|\bfw|$,
is the number of letters contained in $\bfw$, and its \emph{depth}, denoted by $\dep(\bfw)$,
is the number of letters $x_\eta$ ($\eta\in\gG_N$) contained in $\bfw$.

Further, let $\fA_N^1$ denote the subalgebra of $\fA_N$ consisting of words not ending with $x_0$. Hence, $\fA_N^1$ is generated by words of the form $y_{m,\mu}:=x_0^{m-1}x_\mu$ for $m\in \mathbb N$ and $\mu \in \gG_N$.
Define the alphabet $Y_N=\{y_{k,\mu}: k\in \mathbb N,\mu \in \gG_N\}$ and $Y_N^\ast$ is
the set of words (including the empty word) over $Y_N$.  Additionally,
let $\fA_N^0$ denote the subalgebra of $\fA_N^1$ with words not beginning with $x_1$ and not ending with $x_0$.
The words in $\fA_N^0$ are called \emph{admissible words.}

We now equip $\fA_N^1$ with a Hopf algebra structure $(\fA_N^1,\ast,\widetilde{\Delta}_\ast)$.
The \emph{stuffle product} $\ast \colon \fA_N^1 \otimes \fA_N^1 \to \fA_N^1$ is defined as follows:
\begin{enumerate}[(ST1)]
 \item $\be \ast w := w \ast \be := w$,
 \item $y_{m,\mu}u \ast y_{n,\nu}v :=y_{m,\mu}(u \ast y_{n,\nu}v)+y_{n,\nu}(y_{m,\mu}u \ast v) + y_{m+n,\mu\nu}(u\ast v)$,
\end{enumerate}
for any word $u,v,w\in \fA_N^1$,  $m,n\in \mathbb N$ and $\mu,\nu\in \gG_N$. Then linearly extend it to $\fA_N^1$.
The coproduct $\widetilde{\Delta}_\ast \colon \fA_N^1 \to \fA_N^1\otimes \fA_N^1$ is defined by deconcatenation:
\begin{align*}
 \widetilde{\Delta}_\ast(y_{s_1,\eta_1}\cdots y_{s_d,\eta_d}):= \sum_{j=0}^d y_{s_1,\eta_1}\cdots y_{s_j,\eta_j}\otimes y_{s_{j+1},\eta_{j+1}} \cdots  y_{s_d,\eta_d}.
\end{align*}
Note that $(\fA_N^0,\ast)$ is a sub-algebra  (but not a sub-Hopf algebra).

We also need another Hopf algebra structure $(\fA_N,\sha, \widetilde{\Delta}_\sha)$ which
will provide the shuffle relations. Here, the \emph{shuffle product} $\sha\colon \fA_N\otimes \fA_N\to \fA_N$
is defined as follows:
\begin{enumerate}[(SH1)]
 \item $\be \sha w := w \sha \be := w$,
 \item $au \sha b v := a(u\sha bv) + b(au\sha v)$,
\end{enumerate}
for any word $u,v,w \in \fA_N$ and $a,b\in X_N$. Then linearly extend it to $\fA_N$.
The coproduct $\widetilde{\Delta}_\sha \colon \fA_N \to \fA_N\otimes \fA_N$ is
defined (again) by deconcatenation:
\begin{align*}
 \widetilde{\Delta}_\sha (x_{\eta_1}\cdots x_{\eta_d}):= \sum_{j=0}^d x_{\eta_1}\cdots x_{\eta_j}\otimes x_{\eta_{j+1}}\cdots  x_{\eta_{d}},
\end{align*}
where $\eta_1,\ldots,\eta_d\in \gG_N\cup \{0\}$.
Note that both $(\fA_N^1,\sha)$ and $(\fA_N^0,\sha)$ are sub-algebras
(but not as sub-Hopf algebras).

Finally, we remark that both  $(\fA_N^1,\ast)$ and $(\fA_N,\sha)$ are commutative
and associative algebras.

%%%%%%%%%%%%%%%%%%%%%%%%%%%%%%%%%%%%%%%%%%%%%%%%%%%%%%%%%%%%%%%%%%%%%%%%%%%%
\section{Finite colored multiple zeta values}

We recall that finite MZVs and finite Euler sums are elements of the $\Q$-ring
\begin{align*}
\calA:=\prod_{p\in\calP} \F_p \bigg/\bigoplus_{p\in\calP} \F_p.
\end{align*}
Note that for $N=1,2$,  $\calA$ can be identified with $\calA(N)$
since all primes greater than 2 are odd and we can safely disregard the prime $p=2$.
By our choice of the primes in $\calP(N)$ it follows immediately from Fermat's Little Theorem
that the Frobenius endomorphism ($p$-power map at prime $p$-component)
is given by the \emph{conjugation} $(\ol{a_p})_p\in \calA(N)$, where
\begin{align}\label{equ:pPower=inv}
 \ol{\sum_{j=0}^{N-1} c_j \xi_N^j}:=\sum_{j=0}^{N-1} c_j \xi_N^{N-j}, \quad \quad (c_j\in \F_p).
\end{align}

The following lemma implies that $\calA(N)$ is in fact a $\Q(\gG_N)$-vector space.

\begin{lem}
The field $\Q(\gG_N)$ can be embedded into $\calA(N)$ diagonally.
\end{lem}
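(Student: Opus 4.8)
The plan is to write down the evident diagonal reduction map and check it is an injective ring homomorphism, after which the $\Q(\gG_N)$-vector space structure on $\calA(N)$ follows by transport of scalars. Since every element of $\Q(\gG_N)$ is a $\Q$-linear combination of the $N$th roots of unity $\xi_N^j$, I would define $\iota\colon \Q(\gG_N)\to\calA(N)$ on such a combination $z=\sum_j c_j\xi_N^j$ (with $c_j\in\Q$) by sending it to the class of the constant family $\big(\sum_j c_j\xi_{N,p}^j\big)_{p\in\calP(N)}$, where each rational $c_j$ is reduced modulo $p$. A fixed $z$ involves only finitely many $c_j$, whose denominators are divisible by only finitely many primes; discarding those primes is harmless because we quotient by $\bigoplus_{p}\F_p[\xi_N]$, so every coordinate is well defined. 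This is what ``diagonal'' means: the identical algebraic recipe is applied in each coordinate.

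Next I would verify that $\iota$ is a ring homomorphism. Additivity is immediate, and for multiplicativity the one structural relation to respect is the order relation of $\xi_N$: in $\Q(\gG_N)$ we have $\xi_N^N=1$, and by construction $\xi_{N,p}^N=1$ in every $\F_p[\xi_N]=\F_p[X]/(X^N-1)$, so products of roots of unity are matched coordinatewise. Here the choice of $\calP(N)$ is essential. For $p\equiv-1\pmod N$ one has $p\nmid N$, so $X^N-1$ is separable over $\F_p$ and the distinguished element $\xi_{N,p}$ has multiplicative order exactly $N$; moreover $N\mid p^2-1$, so the $N$th roots of unity are genuinely present, and, as recorded after \eqref{equ:pPower=inv}, the Frobenius acts as the conjugation $\ol{\phantom{a}}$, compatibly with complex conjugation on $\Q(\gG_N)$. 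Consequently the prime-by-prime reductions $\Z[\gG_N]\to\F_p[\xi_N]$ are ring maps that agree with the relations of $\Q(\gG_N)$ in all but finitely many coordinates, i.e.\ in $\calA(N)$.

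Granting that $\iota$ is a ring homomorphism, injectivity is automatic: $\Q(\gG_N)$ is a field, so $\ker\iota$ is $0$ or everything, and the latter is excluded because $\iota(1)=1\neq0$ in $\calA(N)$. This nonvanishing uses that $\calP(N)$ is infinite (Chebotarev's Density Theorem, as cited), so the constant family $(1)_p$ has infinite support and survives the quotient by $\bigoplus_p\F_p[\xi_N]$. The step I expect to be most delicate is precisely the second one, namely checking that the separate reductions assemble into a single well-defined ring homomorphism into the quotient, so that \emph{every} defining relation of $\Q(\gG_N)$ holds in almost all coordinates; this is exactly where restricting to $p\equiv-1\pmod N$ cannot be avoided, since for general primes the reduction of $\xi_N$ may fail to have order $N$ (e.g.\ when $p\mid N$) and the map would not be well defined. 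Once well-definedness and injectivity are in hand, pushing the scalar multiplication of $\Q(\gG_N)$ through $\iota$ exhibits $\calA(N)$ as a $\Q(\gG_N)$-vector space, as asserted.
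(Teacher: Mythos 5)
Your map is the same one the paper writes down (reduce each rational coefficient modulo $p$ and assemble the constant family, discarding finitely many bad primes), and your injectivity step --- $\Q(\gG_N)$ is a field, so a nonzero unital ring homomorphism out of it is injective --- is clean and correct \emph{provided} the map is a well-defined ring homomorphism. The genuine gap sits exactly at the step you yourself flag as delicate. You assert that ``the one structural relation to respect is $\xi_N^N=1$,'' but the defining relation of the \emph{field} $\Q(\gG_N)$ is the vanishing of the $N$th cyclotomic polynomial at $\xi_N$, which is strictly stronger than $\xi_N^N=1$ for $N>1$: the powers $1,\xi_N,\dots,\xi_N^{N-1}$ are $\Q$-linearly dependent (the field has degree $\varphi(N)<N$), so an element $z$ has many representations $\sum_j c_j\xi_N^j$ and your recipe must return the same answer for all of them. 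In the model $\F_p[\xi_N]=\F_p[X]/(X^N-1)$ that you explicitly invoke, it does not. Concretely, for $N=3$ one has $1+\xi_3+\xi_3^2=0$ in $\Q(\gG_3)$, whereas $1+X+X^2$ is nonzero in $\F_p[X]/(X^3-1)$ for every $p\in\calP(3)$ (it equals $3e$ for a nonzero idempotent $e$, and $p\neq3$), so the constant family $(1+X+X^2)_p$ survives the quotient by $\bigoplus_p\F_p[\xi_N]$ and is nonzero in $\calA(3)$. Hence the map is either not well defined on $\Q(\gG_N)$ or, if you fix representatives, not multiplicative. Separability of $X^N-1$ and the fact that $X$ has multiplicative order $N$ do not repair this: they make $X$ a generator of a cyclic group of order $N$ inside the group ring $\F_p[\Z/N\Z]$, not a root of the cyclotomic polynomial; indeed, for $N\ge3$ and $p\equiv-1\pmod N$ the cyclotomic polynomial has no root in the $\F_p$-factor of $\F_p[X]/(X^N-1)$ (that would force $N\mid p-1$, hence $N\mid 2$), so no choice of $\xi_{N,p}$ in that ring can satisfy the needed relation.

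The repair is to read $\xi_{N,p}$ as a genuine primitive $N$th root of unity in $\overline{\F_p}$, i.e.\ a root of the $N$th cyclotomic polynomial modulo $p$; since $p\equiv-1\pmod N$ gives $\ord$ of $p$ modulo $N$ equal to $2$ (for $N\ge3$), such a root lives in $\F_{p^2}$, and $\F_p[\xi_N]$ should be the corresponding field (equivalently $\F_p[X]$ modulo an irreducible factor of the cyclotomic polynomial), not $\F_p[X]/(X^N-1)$. With that reading every $\Q$-linear relation among the $\xi_N^j$ is a multiple of the cyclotomic polynomial and therefore reduces correctly modulo all but finitely many $p$, and the rest of your argument goes through. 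To be fair, the paper's own proof takes the same route and is equally silent on this point --- it simply asserts that the coefficientwise reduction ``is an embedding'' --- but your write-up commits explicitly to the justification ($\xi_N^N=1$ is the only relation) that fails, so the well-definedness of the ring homomorphism is a real missing piece rather than a routine check.
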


\begin{proof}
 The map $\phi\colon \mathbb Q \to \calA(N)$, $r\mapsto (\phi_p(r))_{p\in \calP(N)}$ given by $\phi(0)=(0)_p$ and
 \begin{align*}
  \phi_p(r):=\begin{cases}
              r \pmod p & \text{~if~} \ord_p(r)\geq 0, \\
              0 & \text{~otherwise},
             \end{cases}
 \end{align*}
embeds $\mathbb Q$ diagonally in $\calA(N)$ due to the fundamental theorem of arithmetic. The cyclotomic field $\mathbb Q(\gG_N)$ is given by
\begin{align*}
 \mathbb Q(\gG_N):=\left\{\sum_{j=0}^{N-1}a_j\xi_N^j: a_j\in \mathbb Q \right\},
\end{align*}
where $\xi_N\in\gG_N $ is a primitive element. Therefore $\varphi\colon \Q(\gG_N) \to \calA(N)$ defined by
\begin{align*}
 \sum_{j=0}^{N-1}a_j\xi_N^j \longmapsto \left(\sum_{j=0}^{N-1} \phi_p(a_j)\xi_N^j  \right)_{p\in \calP(N)}
\end{align*}
is an embedding.
\end{proof}

In this section, we study $\mathbb Q(\gG_N)$-linear relations among FCVs by developing a double shuffle picture.
First, similar to MZVs, the stuffle product is simply induced by the defining series of
FCVs \eqref{equ:defnFCV}. For example, we have
\begin{align*}
 \sum_{p>k>0}\frac{\alpha^k}{k^a} \sum_{p>l>01}\frac{\beta^l}{l^b}  = \sum_{p>k>l>0}\frac{\alpha^k\beta ^l}{k^al^b} +  \sum_{p>l>k>0}\frac{\alpha^k\beta ^l}{k^al^b} +  \sum_{p>k>0}\frac{(\alpha\beta)^k}{k^{a+b}}
\end{align*}
for $a,b\in \mathbb N$ and $\alpha,\beta \in \gG_N$.
On the other hand, the shuffle product is more involved than that for the MZVs
since apparently there is no integral representation for FCVs available. However, we
will deduce the shuffle relations by using the integral representation of the single variable
multiple polylogarithms.

Define the $\Q(\gG_N)$-linear map $\zeta_{\calA(N),\ast}\colon \fA_N^1\to\calA(N)$ by setting
\begin{align*}
 \zeta_{\calA(N),\ast}(w):=\zeta_{\calA(N)}\lrp{\bfs}{\bfeta}
\end{align*}
for any word $\displaystyle w=W\lrp{\bfs}{\bfeta}:=y_{s_1,\eta_1} \cdots y_{s_d,\eta_d}
\in \fA_N^1$, where $\bfs=(s_1,\ldots,s_d)\in \N^d$ and $\bfeta=(\eta_1,\dots,\eta_d)\in(\gG_N)^d$.

\begin{thm}\label{thm:stuffleFCV}
The map $\zeta_{\calA(N),\ast}\colon (\fA_N^1,\ast)\to\calA(N)$ is an algebra homomorphism.
\end{thm}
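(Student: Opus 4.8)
Since $\zeta_{\calA(N),\ast}$ is $\Q(\gG_N)$-linear by construction and sends the unit $\be$ of $(\fA_N^1,\ast)$ to the empty nested sum $(1)_p$, which is the unit of $\calA(N)$, the plan is to reduce the theorem to the single multiplicativity identity
\[
\zeta_{\calA(N),\ast}(u\ast v)=\zeta_{\calA(N),\ast}(u)\,\zeta_{\calA(N),\ast}(v),\qquad u,v\in\fA_N^1 ,
\]
and then, because multiplication in $\calA(N)=\prod_p\F_p[\xi_N]/\bigoplus_p\F_p[\xi_N]$ is componentwise, to verify it in each prime component $p\in\calP(N)$ separately.

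The essential device is to prove a stronger statement with a \emph{variable} truncation bound. For an integer $M\ge 1$ and a word $w=y_{s_1,\eta_1}\cdots y_{s_d,\eta_d}\in\fA_N^1$ set
\[
Z_M(w):=\sum_{M>k_1>\cdots>k_d>0}\frac{\eta_1^{k_1}\cdots\eta_d^{k_d}}{k_1^{s_1}\cdots k_d^{s_d}},
\]
so that the $p$-component of $\zeta_{\calA(N),\ast}(w)$ is exactly $Z_p(w)$. I would prove
\[
Z_M(u)\,Z_M(v)=Z_M(u\ast v)\qquad\text{for every }M\ge 1
\]
by induction on the combined depth $\dep(u)+\dep(v)$, the case of an empty word being rule (ST1). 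The induction is driven by the peeling identity
\[
Z_M\big(y_{m,\mu}w_0\big)=\sum_{M>j>0}\frac{\mu^{\,j}}{j^{m}}\,Z_{j}(w_0),
\]
which rewrites $Z_M$ of a word in terms of strictly smaller bounds $j<M$.

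For the inductive step write $u=y_{m,\mu}u'$ and $v=y_{n,\nu}v'$ and split the product $Z_M(u)Z_M(v)$, a sum over two decreasing tuples bounded by $M$, according to whether the overall largest index is $k_1$ alone ($k_1>l_1$), $l_1$ alone ($k_1<l_1$), or both ($k_1=l_1$). Each case singles out one new largest index, which I rename $j$ with $0<j<M$; the two remaining factors become truncations at $j$ and the combined depth drops, so the inductive hypothesis applies. The case $k_1>l_1$ contributes $\sum_{M>j>0}(\mu^{j}/j^{m})\,Z_j(u'\ast v)=Z_M(y_{m,\mu}(u'\ast v))$; the case $k_1<l_1$ symmetrically gives $Z_M(y_{n,\nu}(u\ast v'))$; and in the merge $k_1=l_1=j$ the exponents add and the roots of unity combine multiplicatively, $\mu^{j}\nu^{j}/j^{m+n}=(\mu\nu)^{j}/j^{m+n}$, producing $Z_M(y_{m+n,\mu\nu}(u'\ast v'))$. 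By (ST2) the sum of the three contributions is $Z_M(u\ast v)$, which closes the induction; taking $M=p$ and assembling over all $p\in\calP(N)$ then yields the theorem.

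The argument carries no deep obstacle: it is the standard telescoping of a product of nested sums, and the truncation at a prime is harmless because merging two indices keeps them strictly below the bound. The one point that must not be overlooked is that the induction has to be run with the bound $M$ left \emph{free}: read through the peeling identity, the recursion (ST2) inevitably lowers the upper limit from $M$ to the running index $j$, so a proof that fixed $M=p$ throughout would not close. The only genuinely new feature relative to the level-one finite MZVs is the presence of the roots of unity, and these cause no difficulty precisely because $\mu^{j}\nu^{j}=(\mu\nu)^{j}$ matches the diagonal letter $y_{m+n,\mu\nu}$ of (ST2).
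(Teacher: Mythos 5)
Your proof is correct and follows essentially the same route as the paper, which simply asserts that the multiplicativity is "easily seen by induction" on the size of the words; you have supplied the details of that induction, including the important point that the truncation bound must be kept variable for the recursion to close. The choice of $\dep(u)+\dep(v)$ rather than $|u|+|v|$ as the induction measure is an immaterial variation.
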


\begin{proof}
Let $u,v\in \fA_N^1$. It is easily seen by induction on $|u|+|v|$ that
\begin{align*}
\zeta_{\calA(N),\ast}(u\ast v)= \zeta_{\calA(N),\ast}(u)\zeta_{\calA(N),\ast}(v),
\end{align*}
which concludes the proof.
\end{proof}

We now define a map $\bfp\colon \fA_N^1\to \fA_N^1$ by setting
\begin{align*}
 \bfp(y_{s_1,\eta_1}y_{s_2,\eta_2}\cdots y_{s_d,\eta_d})= y_{s_1,\eta_1}y_{s_2,\eta_1\eta_2}\cdots y_{s_d,\eta_1\eta_2\cdots \eta_d}
\end{align*}
and its inverse $\bfq\colon \fA_N^1\to \fA_N^1$ by setting
\begin{align*}
 \bfq(y_{s_1,\eta_1}y_{s_2,\eta_2}\cdots y_{s_d,\eta_d})= y_{s_1,\eta_1}y_{s_2,\eta_2 \eta_1^{-1}}\cdots y_{s_d,\eta_d\eta_{d-1}^{-1}}.
\end{align*}
Further, set the map $\tau\colon \fA_1^1 \to \fA_1^1$ by defining $\tau(\be):=\be$ and
\begin{align*}
 \tau(x_0^{s_1-1}x_1\cdots x_0^{s_d-1}x_1) := (-1)^{s_1+\cdots+s_d} x_0^{s_d-1}x_1\cdots x_0^{s_1-1}x_1
\end{align*}
and extended to $\fA_1^1$ by linearity.

The next theorem can be proved in the same manner as that for \cite[Thm.~3.11]{Zhao15}.
In order to be self-contained, we present its complete proof. For any
$s,s_1,\dots,s_d\in\N$ and $\xi,\xi_1,\dots,\xi_d\in\gG_N$, we define
the function with complex variable $z$ with $|z|<1$ by
\begin{align*}
\zeta_\sha(y_{s,\xi} y_{s_1,\xi_1} \dots y_{s_d,\xi_d};z)
:=& \Li_{s,s_1,\dots,s_d}(z\xi,\xi_1/\xi,\xi_2/\xi_1,\dots,\xi_d/\xi_{d-1}) \\
=&\, \int_{[0,z]} \left(\frac{dt}{t}\right)^{s-1} \frac{dt}{\xi^{-1}-t}
\left(\frac{dt}{t}\right)^{s_1-1} \frac{dt}{\xi_1^{-1}-t} \cdots
\left(\frac{dt}{t}\right)^{s_d-1} \frac{dt}{\xi_d^{-1}-t},
\end{align*}
where $[0,z]$ is the straight line from 0 to $z$. It is clear that
$\zeta_\sha(-;z)$ is well-defined.
By the shuffle relation of iterated integrals, we get
\begin{align}\label{equ:zetaShaz}
\zeta_\sha(u;z)\zeta_\sha(v;z)=\zeta_\sha(u\sha v;z)
\end{align}
for all $u,v\in Y_N^\ast$.

\begin{thm}\label{thm:shuffleFCV}
Let $N\ge 1$. Define the map $\zeta_{\calA(N),\sha}:= \zeta_{\calA(N),\ast} \circ \bfq\colon \fA_N^1\to\calA(N)$.
Then we have
\begin{align*}
 \zeta_{\calA(N),\sha}(u \sha v)=\zeta_{\calA(N),\sha}(\tau(u)v)
\end{align*}
for any $u\in \fA_1^1$ and $v\in \fA_N^1$. These relations are called linear shuffle relations
for the FCVs.
\end{thm}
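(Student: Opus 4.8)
The plan is to route everything through the single-variable polylogarithm $\zeta_\sha(-;z)$ and to reduce the statement to an elementary reversal of summation $k\mapsto p-k$ in $\F_p$. Since $\zeta_{\calA(N),\sha}$, $\sha$ and $\tau$ are all $\Q(\gG_N)$-linear, the asserted identity is bilinear in $(u,v)$, so it suffices to treat monomials $u=y_{a_1,1}\cdots y_{a_r,1}\in\fA_1^1$ and $v=y_{b_1,\beta_1}\cdots y_{b_s,\beta_s}\in\fA_N^1$. Throughout, $[z^n]F$ denotes the coefficient of $z^n$ in a power series $F$, and $[\,a\,]_p$ the $p$-th component of $a\in\calA(N)$.

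\emph{Coefficient bridge.} First I would show that for every $w\in\fA_N^1$ and every $p\in\calP(N)$,
\[
[\,\zeta_{\calA(N),\sha}(w)\,]_p\equiv\sum_{n=1}^{p-1}[z^n]\zeta_\sha(w;z)\pmod p .
\]
This comes directly from $\zeta_{\calA(N),\sha}=\zeta_{\calA(N),\ast}\circ\bfq$: expanding the defining integral shows that $\zeta_\sha(w;z)$ equals the series-convention generating function of $\bfq(w)$, i.e.\ the sum over $k_1>\cdots>k_d>0$ of the summand defining $\zeta_{\calA(N),\ast}(\bfq(w))$ weighted by $z^{k_1}$; hence summing its first $p-1$ coefficients reproduces the truncated sum $\sum_{p>k_1>\cdots>k_d>0}$ that defines $[\,\zeta_{\calA(N),\ast}(\bfq(w))\,]_p$. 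As only indices $<p$ occur, every denominator is prime to $p$ and the reduction is legitimate.

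\emph{The two sides.} Applying the bridge to $w=u\sha v$ and using the shuffle relation \eqref{equ:zetaShaz}, namely $\zeta_\sha(u\sha v;z)=\zeta_\sha(u;z)\zeta_\sha(v;z)$, a Cauchy product gives
\[
[\,\zeta_{\calA(N),\sha}(u\sha v)\,]_p\equiv\sum_{\substack{i,j\ge1\\ i+j\le p-1}}A_iB_j\pmod p,\qquad A_i:=[z^i]\zeta_\sha(u;z),\ \ B_j:=[z^j]\zeta_\sha(v;z),
\]
where $A_i=\sum_{i=k_1>\cdots>k_r>0}k_1^{-a_1}\cdots k_r^{-a_r}$ and $B_j=\sum_{j=l_1>\cdots>l_s>0}\beta_1^{l_1-l_2}\cdots\beta_s^{l_s}\,l_1^{-b_1}\cdots l_s^{-b_s}$. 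On the other hand $\tau(u)=(-1)^{|u|}y_{a_r,1}\cdots y_{a_1,1}$, so applying the bridge to $w=\tau(u)v$ yields
\[
[\,\zeta_{\calA(N),\sha}(\tau(u)v)\,]_p\equiv(-1)^{|u|}\!\!\sum_{p>k_1>\cdots>k_r>l_1>\cdots>l_s>0}\!\!\frac{\beta_1^{l_1-l_2}\cdots\beta_s^{l_s}}{k_1^{a_r}\cdots k_r^{a_1}\,l_1^{b_1}\cdots l_s^{b_s}}\pmod p,
\]
the level-$1$ block now carrying the reversed exponents and lying entirely above the $v$-block.

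\emph{Finite reversal and conclusion.} In the last sum I substitute $k_i\mapsto p-k_{r+1-i}$ on the $u$-block only, fixing $l_1,\dots,l_s$. Modulo $p$ one has $(p-k)^{a}\equiv(-1)^{a}k^{a}$, which contributes the global sign $(-1)^{a_1+\cdots+a_r}=(-1)^{|u|}$, cancelling the prefactor and restoring the exponents to the order $a_1,\dots,a_r$; at the same time the nested inequalities $k_1>\cdots>k_r>l_1$ become two independent decreasing chains $k_1>\cdots>k_r>0$ and $l_1>\cdots>l_s>0$ coupled only by $k_1+l_1\le p-1$. This is precisely the expression $\sum_{i+j\le p-1}A_iB_j$ from the left-hand side, so the two $p$-components agree for all $p\in\calP(N)$ and the identity holds in $\calA(N)$. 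The one delicate point — and the reason $u$ is required to lie in $\fA_1^1$ rather than $\fA_N^1$ — is that the complementation $k\mapsto p-k$ would otherwise turn each root $\eta$ on the $u$-block into $\ol\eta$ (via $\eta^{p-k}=\ol\eta\,\eta^{-k}$); for level-$1$ letters every root is $1$, so only the sign survives and the reversal is captured exactly by $\tau$. The main bookkeeping obstacle is tracking this decoupling of the ordering constraints together with $p$-integrality, but no genuinely new difficulty arises beyond \cite[Thm.~3.11]{Zhao15}.
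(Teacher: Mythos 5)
Your proof is correct, but it follows a genuinely different route from the paper's. Both arguments pass through the single-variable polylogarithm $\zeta_\sha(-;z)$, its shuffle relation \eqref{equ:zetaShaz}, and the observation that its Taylor coefficients below order $p$ reassemble into $H_{p-1}\circ\bfq$ (the paper phrases this as ${\rm Coeff}_{z^p}[\zeta_\sha(x_1w;z)]=\tfrac1p H_{p-1}(\bfq(w))$, you as the sum of the first $p-1$ coefficients of $\zeta_\sha(w;z)$); from there the mechanisms diverge. The paper reduces to the single-block identity $\zeta_{\calA(N),\sha}((x_0^{s-1}x_1u)\sha v)=(-1)^s\zeta_{\calA(N),\sha}(u\sha(x_0^{s-1}x_1v))$ and proves it via the word identity \eqref{eq:shdecomp}: prepending the letter $x_1$ makes the difference of the two sides appear, after extracting the coefficient of $z^p$, as $\tfrac1p$ times an alternating sum of products of coefficients of $z^j$ and $z^{p-j}$ with $0<j<p$, and $p$-integrality of that sum forces the congruence. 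You instead keep the full words $u,v$, replace the word identity by a Cauchy product (turning the left-hand side into the decoupled sum $\sum_{i+j\le p-1}A_iB_j$), and replace the divisibility argument by the explicit substitution $k\mapsto p-k$ on the level-one block of $\tau(u)v$, checking that the nested chain $p>k_1>\cdots>k_r>l_1>\cdots>l_s>0$ transforms into that same decoupled sum with the sign $(-1)^{|u|}$ absorbed. Your version buys transparency: nothing is ever divided by $p$, every quantity in sight is $p$-integral, and the reason $u$ must lie in $\fA_1^1$ (a nontrivial root would pick up a conjugate under $k\mapsto p-k$ since $p\equiv-1\pmod{N}$) is visible rather than implicit. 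The paper's version buys brevity, avoids decoupling the ordering constraints by hand, and parallels the presentation of \cite[Thm.~3.11]{Zhao15}. The only point worth tightening in your write-up is the degenerate case where $u$ or $v$ is the empty word: your Cauchy product restricts to $i,j\ge 1$, which presumes both are nonempty; the empty cases are either trivial or reduce to the level-one reversal relation, which your substitution also handles, but this should be said.
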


\begin{proof}
Obviously, it suffices to prove
\begin{align*}
 \zeta_{\calA(N),\sha}((x_0^{s-1}x_1 u) \sha v) =(-1)^s \zeta_{\calA(N),\sha}(u \sha (x_0^{s-1}x_1v))
\end{align*}
for $s\in \mathbb N$, $u\in \fA_1^1$ and $v\in \fA_N^1$. For simplicity we use the notation $a:=x_0$ and $b:=x_1$ in the rest of this proof.
Let $w$ be a word in $\fA_N^1$, i.e., there exist $\bfs \in \mathbb{N}^d$ and $\bfxi\in (\gG_N)^d$ such that $w=y_{s_1,\xi_1} \dots y_{s_d,\xi_d}$.
Then there exists $\bfeta\in (\gG_N)^d$, such that $\displaystyle \bfq(w)=W\lrp{\bfs}{\bfeta}$.
For any prime $p>2$, the coefficient of
$z^p$ in $\zeta_\sha(bw;z)$ is given by
\begin{equation*}
  {\rm Coeff}_{z^p}\Big[\zeta_\sha\lrp{1,s_1,\dots,s_d}{z,\xi_1,\dots,\xi_d} \Big]=
  \frac{1}{p}\sum_{p>k_1>\cdots>k_d>0}
    \frac{\eta_1^{k_1}\cdots \eta_d^{k_d}}{k_1^{s_1}\cdots k_d^{s_d}}
  =\frac{1}{p} H_{p-1}(\bfq(w)),
\end{equation*}
where
\begin{align*}
 H_k(y_{s_1,\eta_1} \dots y_{s_d,\eta_d}):=\sum_{k\geq k_1>\cdots>k_d>0}
    \frac{\eta_1^{k_1}\cdots \eta_d^{k_d}}{k_1^{s_1}\cdots k_d^{s_d}}.
\end{align*}
Observe that
\begin{align}\label{eq:shdecomp}
b \Big( (a^{s-1}b u) \sha v -(-1)^s   u \sha (a^{s-1}b v)\Big)
=\sum_{j=0}^{s-1} (-1)^{j} (a^{s-1-j}b u) \sha (a^j b v).
\end{align}
By applying $\zeta_\sha(-;z)$ to \eqref{eq:shdecomp} and then extracting
the coefficients of $z^p$  from both sides we obtain
\begin{align*}
&\, \frac{1}{p}\Big( H_{p-1}\circ\bfq\big((a^{s-1}b u) \sha v\big)
    -(-1)^s  H_{p-1}\circ\bfq\big(u \sha (a^{s-1}b v)\big)\Big)\\
=&\,\sum_{j=0}^{s-1} (-1)^{j} {\rm Coeff}_{z^p}
    \big[\zeta_\sha(a^{s-1-j}b u;z)\zeta_\sha(a^{j}b v;z)\big] \\
=&\,\sum_{j=0}^{s-1} (-1)^{j}\sum_{j=1}^{p-1}
{\rm Coeff}_{z^j}\big[\zeta_\sha(a^{s-1-j}b u;z) \big]
{\rm Coeff}_{z^{p-j}}\big[\zeta_\sha(a^{j}b v;z)\big]
\end{align*}
by \eqref{equ:zetaShaz}. Since $p-j<p$ and $j<p$ the last sum
is $p$-integral. Therefore we get
\begin{equation*}
H_{p-1}\circ\bfq((a^{s-1}b u) \sha v)\equiv
(-1)^s  H_{p-1}\circ\bfq(u \sha (a^{s-1}b v)) \pmod{p}
\end{equation*}
which completes the proof of the theorem.
\end{proof}

%%%%%%%%%%%%%%%%%%%%%%%%%%%%%%%%%%%%%%%%%%%%%%%%%%%%%%%%%%%%%%%%%%%%%%%%%%%%

\section{Symmetrized colored multiple zeta values}
\subsection{Regularizations of colored MZVs}
Since the regularized colored MZVs (recalled in Theorem \ref{theo:Racinet} below)
are polynomials of $T$, both of the two kinds of SCVs defined by
\eqref{equ:astSCV} and \eqref{equ:shaSCV} are \emph{a priori} also polynomials of $T$.
We show first that they are in fact constant complex numbers. To this end,
we define two maps $\zeta_\ast,\zeta_\sha \colon \fA_N^0 \to \CC$ such that for any word
$\displaystyle w=W\lrp{\bfs}{\bfeta}\in \fA_N^0$
\begin{align*}
  \zeta_\ast(w):=\zeta\lrp{\bfs}{\bfeta} \quad \quad\text{~and~}\quad \quad \zeta_\sha(w):=\zeta_\sha \Lrp{s_1,s_2,\ldots,\ \, s_d\ \, }{\eta_1,\frac{\eta_{2}}{\eta_1},\ldots, \frac{\eta_{d}}{\eta_{d-1}}}.
\end{align*}

\begin{lem}[\cite{Arakawa04,Racinet02}]
The maps $\zeta_\ast \colon (\fA_N^0,\ast) \to \CC$ and
$\zeta_\sha \colon (\fA_N^0,\sha) \to \CC$ are algebra homomorphisms.
\end{lem}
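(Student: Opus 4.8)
The plan is to recall the two standard double shuffle arguments, treating the two homomorphisms separately. Throughout one uses that an admissible word $w=W\lrp{\bfs}{\bfeta}\in\fA_N^0$ satisfies $(s_1,\eta_1)\ne(1,1)$, so that both regularizations reduce to honest convergent objects and no $T$-dependence intervenes: $\zeta_\ast(w)$ is the convergent nested sum \eqref{def:CMZV}, while $\zeta_\sha(w)$ is the convergent iterated integral obtained as the $z\to1$ limit of the function $\zeta_\sha(-;z)$.

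For the stuffle map I would argue purely on the series side, where the coproduct plays no role and (ST2) is designed precisely to encode multiplication of nested sums. Rather than rearranging conditionally convergent series, I would work with the truncated sums
\[
H_M(w):=\sum_{M\ge k_1>\cdots>k_d>0}\frac{\eta_1^{k_1}\cdots\eta_d^{k_d}}{k_1^{s_1}\cdots k_d^{s_d}}
\]
already introduced in the previous section, and prove by induction on $|u|+|v|$ that $H_M(u\ast v)=H_M(u)H_M(v)$ for every $M$ and all $u,v\in\fA_N^1$: splitting the product of the two index simplices according to whether the leading indices satisfy $k_1>l_1$, $k_1<l_1$ or $k_1=l_1$ reproduces exactly the three terms of the stuffle recursion (ST2). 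Letting $M\to\infty$ then gives $\zeta_\ast(u\ast v)=\zeta_\ast(u)\zeta_\ast(v)$ whenever $u,v,u\ast v\in\fA_N^0$, since each truncation converges to the corresponding CMZV. This is the colored analogue of Hoffman's quasi-shuffle computation and is essentially formal.

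For the shuffle map the key input is already available: by Chen's shuffle formula for iterated integrals we have \eqref{equ:zetaShaz}, namely $\zeta_\sha(u;z)\zeta_\sha(v;z)=\zeta_\sha(u\sha v;z)$ for all $|z|<1$. I would deduce the claim by letting $z\to 1^{-}$ along $[0,1)$. Two points must be checked. First, $\fA_N^0$ is stable under $\sha$: every word occurring in $u\sha v$ begins with a leading letter of $u$ or of $v$, hence with a non-$x_1$ letter, and ends with a trailing letter of $u$ or of $v$, hence with a non-$x_0$ letter, so $u\sha v$ is again admissible. Second, for admissible $w$ the outermost $1$-form is $dt/t$ when $s_1\ge2$ and $dt/(\eta_1^{-1}-t)$ with $\eta_1\ne1$ when $s_1=1$, while the innermost form is $dt/(\eta_d^{-1}-t)$, never $dt/t$; thus the iterated integral over $[0,1]$ converges at both endpoints and $\lim_{z\to1^-}\zeta_\sha(w;z)=\zeta_\sha(w)$. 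Passing to the limit in \eqref{equ:zetaShaz} yields $\zeta_\sha(u\sha v)=\zeta_\sha(u)\zeta_\sha(v)$.

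The genuinely analytic point, and the only real obstacle, is the continuity of $\zeta_\sha(w;z)$ up to the boundary $z=1$ for admissible $w$. Once this Abel-type limit is justified, equivalently the convergence of the iterated integral at the endpoints, which is guaranteed exactly by the admissibility conditions defining $\fA_N^0$, both homomorphism statements follow. I therefore expect the stuffle half to be entirely routine and would concentrate the care on the boundary analysis of the shuffle half.
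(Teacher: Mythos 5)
Your argument is correct and follows exactly the route the paper indicates: the paper states this lemma without proof, citing \cite{Arakawa04,Racinet02} and remarking only that $\zeta_\ast$ comes from the series definition \eqref{def:CMZV} while $\zeta_\sha$ comes from the integral representation by setting $z=1$ in \eqref{equ:zetaShaz}. Your truncated-sum induction for the stuffle half and your endpoint-convergence analysis for the shuffle half supply precisely the standard details behind that one-sentence sketch, so there is nothing to correct.
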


This first preliminary result is well-known. The map $\zeta_\ast$ originates from
the series definition \eqref{def:CMZV} while the map $\zeta_\sha$ comes from
the integral representation by setting $z=1$ in \eqref{equ:zetaShaz}.

The following results of Racinet \cite{Racinet02} addressing the regularization of colored MZVs
generalize those for the MZVs first discovered by Ihara, Kaneko and Zagier \cite{Ihara06}.

\begin{thm}\label{theo:Racinet}
Let $N$ be a positive integer.
\begin{enumerate}[(i)]
  \item The algebra homomorphism $\zeta_\ast \colon (\fA_N^0,\ast) \to \CC$ can be extended to a homomorphism  $\zeta_{\ast}(-;T)\colon(\fA_N^1,\ast)\to\CC[T]$, where $\zeta_{\ast}(y_{1,1};T)=T$.
  \item The algebra homomorphism $\zeta_\sha \colon (\fA_N^0,\sha) \to \CC$ can be extended to a homomorphism  $\zeta_{\sha}(-;T)\colon(\fA_N^1,\sha)\to\CC[T]$, where $\zeta_{\sha}(x_1;T)=T$.
  \item For all $w\in\fA_N^1$, we have
  \begin{align*}
    \zeta_{\sha}\big(\bfp(w);T\big)=\rho \big(\zeta_{\ast}(w;T)\big),
  \end{align*}
  where $\rho\colon \CC[T]\to \CC[T]$ is a $\CC$-linear map such that
  \begin{align*}
    \rho(e^{Tu})=\exp\left(\sum_{n=2}^\infty  \frac{(-1)^n}{n}\zeta(n)u^n\right)e^{Tu}
  \end{align*}
  for all $|u|<1$.
\end{enumerate}
\end{thm}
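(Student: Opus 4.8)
The plan is to obtain (i) and (ii) from polynomial-algebra structure theorems for the two products, and to reduce (iii) to a single asymptotic computation that produces the $\Gamma$-factor, following the pattern of Ihara--Kaneko--Zagier \cite{Ihara06} and Racinet \cite{Racinet02}.

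\emph{Parts (i) and (ii).} First I would prove that $(\fA_N^1,\ast)$ is the polynomial algebra $(\fA_N^0,\ast)[y_{1,1}]$ and that $(\fA_N^1,\sha)$ is the polynomial algebra $(\fA_N^0,\sha)[x_1]$ in the single transcendental generator $z:=y_{1,1}=x_1$. For the shuffle case the key computation is that if $w=x_1^m w'$ with $w'$ not beginning with $x_1$ (so $w'\in\fA_N^0$, since $w'$ inherits the non-$x_0$ ending from $w\in\fA_N^1$), then $x_1^{\sha m}\sha w'=m!\,x_1^m w'+(\text{words with fewer than }m\text{ leading }x_1)$; induction on weight and on the number of leading $x_1$'s lets one solve recursively for $x_1^m w'$ and yields existence and uniqueness of the expansion. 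The stuffle case is analogous and simpler. Granting these, every $w\in\fA_N^1$ has a unique expansion $w=\sum_n w_n\bullet z^{\bullet n}$ with $w_n\in\fA_N^0$ and $\bullet\in\{\ast,\sha\}$, and one simply sets $\zeta_\bullet(w;T):=\sum_n\zeta_\bullet(w_n)T^n$. This is an algebra homomorphism extending the one on $\fA_N^0$ by construction, with $\zeta_\ast(y_{1,1};T)=\zeta_\sha(x_1;T)=T$, giving (i) and (ii).

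\emph{The heart of (iii): the $\Gamma$-factor.} On admissible words the identity is immediate: for $w=W\lrp{\bfs}{\bfeta}\in\fA_N^0$ one has $\bfp(w)=y_{s_1,\eta_1}y_{s_2,\eta_1\eta_2}\cdots$, so $\zeta_\sha(\bfp(w))=\zeta\lrp{\bfs}{\bfeta}=\zeta_\ast(w)$, and $\rho$ fixes constants. The whole content is the regularization of the divergent tower $z^n$, which I would compute on both sides. On the shuffle side $x_1^{\sha n}=n!\,x_1^n$ gives $\zeta_\sha(x_1^n;T)=T^n/n!$, hence $\sum_{n\ge0}\zeta_\sha(x_1^n;T)u^n=e^{Tu}$. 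On the stuffle side the exact partial sum is the elementary symmetric function $\sum_{M\ge k_1>\cdots>k_n\ge1}(k_1\cdots k_n)^{-1}=e_n(1,1/2,\dots,1/M)$, so $\sum_n H_M(x_1^n)u^n=\prod_{k=1}^M(1+u/k)=\Gamma(M{+}1{+}u)/(\Gamma(M{+}1)\Gamma(1{+}u))$. Using $\Gamma(M{+}1{+}u)/\Gamma(M{+}1)\sim M^u$ and $\sum_{k\le M}1/k=\log M+\gamma+o(1)$, and letting $T$ be the regularization parameter $\sum_{k\le M}1/k$, I obtain
\[
\sum_{n\ge0}\zeta_\ast(x_1^n;T)\,u^n=\frac{e^{(T-\gamma)u}}{\Gamma(1+u)}=A(u)^{-1}e^{Tu},\qquad A(u):=e^{\gamma u}\Gamma(1+u).
\]
Since $\sum_{n\ge2}\frac{(-1)^n}{n}\zeta(n)u^n=\gamma u+\log\Gamma(1+u)$, we get $A(u)=\exp\!\big(\sum_{n\ge2}\frac{(-1)^n}{n}\zeta(n)u^n\big)$, so $\rho(e^{Tu})=A(u)e^{Tu}$. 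As $\bfp(x_1^n)=x_1^n$, applying the $\CC$-linear map $\rho$ coefficientwise sends $A(u)^{-1}e^{Tu}$ to $e^{Tu}$, i.e. $\zeta_\sha(\bfp(x_1^n);T)=\rho(\zeta_\ast(x_1^n;T))$, which is (iii) for the tower.

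\emph{Propagation and the main obstacle.} It remains to upgrade the identity from the tower to arbitrary $w\in\fA_N^1$. Both $w\mapsto\zeta_\sha(\bfp(w);T)$ and $w\mapsto\rho(\zeta_\ast(w;T))$ restrict to the same map on $\fA_N^0$ and agree on the lone divergent generator; I would show they coincide everywhere by assembling the two families of values into group-like generating series in the graded completion of $\fA_N$ and recognizing (iii) as the statement that passing from the $\ast$-regularized series to the $\sha$-regularized series (after $\bfp$) is right translation by the single group-like $\Gamma$-element whose $x_1$-component is $A(u)^{-1}$. I expect this to be the main obstacle: one must verify that the discrepancy carried by the single generator $y_{1,1}$ propagates coherently through all depths, all $T$-degrees, and all colorings $\eta_j\in\gG_N$, so that the comparison is governed globally by $\rho$. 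This propagation, carried out with the two Hopf structures $(\fA_N^1,\ast,\widetilde{\Delta}_\ast)$ and $(\fA_N,\sha,\widetilde{\Delta}_\sha)$, is exactly the technical core of \cite{Racinet02} (and of \cite{Ihara06} for $N=1$, where $\bfp=\mathrm{id}$); the $\Gamma$-computation above supplies the analytic input that pins down the comparison map, and I would invoke that argument to finish.
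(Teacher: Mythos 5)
The paper does not actually prove this theorem: it is quoted as a known result, with the proof deferred entirely to Racinet \cite{Racinet02} (and Arakawa--Kaneko \cite{Arakawa04}), generalizing Ihara--Kaneko--Zagier \cite{Ihara06}. So there is no in-paper argument to compare against; what can be said is that your sketch is a faithful and essentially correct outline of the standard proof. The details you do carry out check out: the polynomial-algebra structure $(\fA_N^1,\bullet)\cong(\fA_N^0,\bullet)[z]$ with $z=y_{1,1}=x_1$ (including the observation that stuffing two letters $y_{1,1}$ and $y_{n,\nu}$ can never recreate a leading $y_{1,1}$, so the triangularity argument survives at level $N$), the generating functions $e^{Tu}$ and $\prod_{k=1}^M(1+u/k)=\Gamma(M+1+u)/(\Gamma(M+1)\Gamma(1+u))$ on the divergent tower, and the identity $\sum_{n\ge 2}\tfrac{(-1)^n}{n}\zeta(n)u^n=\gamma u+\log\Gamma(1+u)$ pinning down $\rho$ are all correct, and the reduction of (iii) on admissible words to the trivial statement $\rho|_\CC=\id$ is consistent with the paper's convention $\zeta_\sha(\bfp(w))=\zeta_\ast(w)$ for $w\in\fA_N^0$. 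The one step you do not carry out is the propagation of the comparison from the tower $y_{1,1}^n$ to all of $\fA_N^1$ (in \cite{Ihara06} this is done by an Abel-summation type lemma comparing the asymptotic scales $\log M+\gamma$ and $-\log(1-z)$, rather than purely Hopf-algebraically); you correctly identify this as the technical core and defer it to \cite{Racinet02}, which is exactly what the paper itself does for the whole theorem. So your proposal is, if anything, more informative than the paper's treatment, but it is a proof sketch resting on the same external reference, not an independent complete proof.
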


In the above theorem, to signify the fact that the images of $\zeta_{\sha}$ and $\zeta_{\ast}$ are polynomials of $T$, we have used the notation $\zeta_{\sha}(w;T)$ and $\zeta_{\ast}(w;T)$.

\begin{prop}\label{prop:SCVconst}
For  all $\bfs\in \N^d$ and $\bfeta\in (\gG_N)^d$, both
$\displaystyle \zeta_\ast^\Sy\lrp{\bfs}{\bfeta}$
and $\displaystyle \zeta_\sha^\Sy\lrp{\bfs}{\bfeta}$ are constant complex values, i.e.,
they are independent of $T$.
\end{prop}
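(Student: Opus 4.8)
The plan is to recast the defining sum \eqref{equ:astSCV} Hopf-algebraically and then to prove that the element it evaluates already lies in the admissible subalgebra $\fA_N^0$, on which regularization carries no $T$. First I would introduce the signed conjugate-reversal map $\sigma\colon\fA_N^1\to\fA_N^1$,
\[
\sigma\big(y_{s_1,\eta_1}\cdots y_{s_d,\eta_d}\big):=(-1)^{s_1+\cdots+s_d}\,\ol{\eta_1\cdots\eta_d}\;y_{s_d,\ol{\eta_d}}\cdots y_{s_1,\ol{\eta_1}},
\]
which records exactly the scalar and the reversed colored index appearing in \eqref{equ:astSCV}. Writing $\widetilde{\Delta}_\ast(w)=\sum_{j=0}^d u_j\otimes v_j$ with $u_j=y_{s_1,\eta_1}\cdots y_{s_j,\eta_j}$ and $v_j=y_{s_{j+1},\eta_{j+1}}\cdots y_{s_d,\eta_d}$, and using that $\zeta_\ast(-;T)$ is a $\Q(\gG_N)$-linear $\ast$-homomorphism (Theorem \ref{theo:Racinet}(i)), I get
\[
\zeta_\ast^\Sy(w)=\sum_{j=0}^d \zeta_\ast\big(\sigma(u_j);T\big)\,\zeta_\ast(v_j;T)=\zeta_\ast\big(\Theta(w);T\big),\qquad \Theta(w):=\sum_{j=0}^d \sigma(u_j)\ast v_j .
\]
So the statement for $\zeta_\ast^\Sy$ reduces to a single claim about the algebraic map $\Theta$.

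The crux is the Key Lemma $\Theta(\fA_N^1)\subseteq\fA_N^0$. Granting it, $\zeta_\ast(-;T)$ restricts on $\fA_N^0$ to the convergent, $T$-free homomorphism $\zeta_\ast$, whence $\zeta_\ast^\Sy(w)=\zeta_\ast(\Theta(w))$ is a constant. To prove the lemma I would track which words occurring in $\Theta(w)$ begin with the letter $y_{1,1}$, since these are precisely the monomials to which stuffle regularization attaches a positive power of $T$. In each product $\sigma(u_j)\ast v_j$ the leading letter of a resulting word is, by (ST2), the leading letter of $\sigma(u_j)$, the leading letter of $v_j$, or their merge $y_{s_j+s_{j+1},\,\ol{\eta_j}\eta_{j+1}}$. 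The merge has weight $\ge 2$ and can never equal $y_{1,1}$, so a leading $y_{1,1}$ forces either $(s_{j+1},\eta_{j+1})=(1,1)$, with the letter supplied by $v_j$, or $(s_j,\eta_j)=(1,1)$, with the letter supplied by $\sigma(u_j)$.

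The decisive step is that these contributions cancel between consecutive cut points. Using (ST2), the $y_{1,1}$-leading part of $\sigma(u_j)\ast v_j$ with the letter supplied by $v_j=y_{1,1}v_j'$ equals $y_{1,1}\big(\sigma(u_j)\ast v_j'\big)$; on the other hand, from the identity $\sigma(u_j\,y_{1,1})=-\,y_{1,1}\,\sigma(u_j)$ together with $v_{j+1}=v_j'$, the $y_{1,1}$-leading part of $\sigma(u_{j+1})\ast v_{j+1}$ with the letter supplied by $\sigma(u_{j+1})$ equals $-\,y_{1,1}\big(\sigma(u_j)\ast v_j'\big)$. These two annihilate each other, and the boundary cuts $j=0$ (where $u_0=\be$) and $j=d$ (where $v_d=\be$) contribute none; hence no word of $\Theta(w)$ begins with $y_{1,1}$, i.e.\ $\Theta(w)\in\fA_N^0$, and constancy of $\zeta_\ast^\Sy$ follows. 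The statement for $\zeta_\sha^\Sy$ runs on the same script in $(\fA_N,\sha,\widetilde{\Delta}_\sha)$: $x_1$ is now the unique letter carrying $T$ (Theorem \ref{theo:Racinet}(ii)), (SH2) replaces (ST2) (and, having no merge term, is even simpler), and $\sigma(u\,x_1)=-x_1\sigma(u)$ again pairs adjacent cuts after translating indices through $\bfp$; alternatively one transfers constancy from the $\ast$-case via Theorem \ref{theo:Racinet}(iii), since $\rho$ is a $\CC$-linear automorphism of $\CC[T]$ with $\rho(1)=1$ and therefore sends constants to constants.

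I expect the main obstacle to be the Key Lemma $\Theta(\fA_N^1)\subseteq\fA_N^0$ — concretely, getting the sign and conjugation bookkeeping in $\sigma(u\,y_{1,1})=-y_{1,1}\sigma(u)$ and in the leading-letter analysis of the stuffle (resp.\ shuffle) product exactly right across all adjacent cuts, and, in the $\sha$-case, correctly reconciling the series indexing of \eqref{equ:shaSCV} with the ratio indexing built into $\zeta_\sha$ via $\bfp$. A conceptual warning guards the calculation: a careless version of the cancellation would collapse $\Theta$ to the Hopf antipode and wrongly force $\zeta_\ast^\Sy\equiv 0$, whereas in truth only the $T$-dependent part cancels and the surviving value is a genuine, often nonzero, constant, as in $\zeta_\ast^\Sy\lrp{1,1}{1,1}=-\zeta(2)$.
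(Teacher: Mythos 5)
Your main argument is correct and is, at its core, the paper's argument: both proofs reduce constancy to showing that the signed, conjugated deconcatenation sum is an \emph{admissible} element (so that the regularized map restricts to the $T$-free one), and both establish admissibility by cancelling the non-admissible leading contributions between consecutive cut points via the identity $\sigma(u\,y_{1,1})=-y_{1,1}\sigma(u)$ — the paper's version of your pairing is the substitution $j\to j+1$ in its display. The packaging differs in two respects. You run the telescoping globally over all cuts $j=0,\dots,d$ at once through the single element $\Theta(w)=\sum_j\sigma(u_j)\ast v_j$, whereas the paper first isolates maximal blocks of consecutive $(1,1)$-entries and treats each block separately; your version is cleaner and absorbs the paper's two special cases (no entry equal to $(1,1)$; all entries equal to $(1,1)$) into the general statement. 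You also carry out the \emph{stuffle} case in detail — correctly noting that the merge letter $y_{m+n,\mu\nu}$ in (ST2) has weight at least $2$ and so can never be $y_{1,1}$ — while the paper details the shuffle case and only sketches the stuffle one. One wording caution: the boundary cuts $j=0$ and $j=d$ \emph{can} contribute words beginning with $y_{1,1}$ (the $j=0$ term is $w$ itself); what your pairing actually needs, and what is true, is that $j=0$ contributes no term of type (a) and $j=d$ no term of type (b), so that every offending term is matched exactly once.

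The one genuine defect is the proposed ``alternative'' for the shuffle case via Theorem \ref{theo:Racinet}(iii). The map $\rho$ is a unipotent $\CC$-linear automorphism of $\CC[T]$ fixing constants, but it is \emph{not} a ring homomorphism, and the shuffle-symmetrized value is $\sum_j\rho(P_j)\rho(Q_j)$, not $\rho\big(\sum_jP_jQ_j\big)$; constancy does not transfer termwise. Concretely, $\rho(1)=1$, $\rho(T)=T$, $\rho(T^2)=T^2+\zeta(2)$, $\rho(T^3)=T^3+3\zeta(2)T-2\zeta(3)$, so
\begin{equation*}
1\cdot T^3-T\cdot T^2=0 \quad\text{is constant, while}\quad \rho(1)\rho(T^3)-\rho(T)\rho(T^2)=2\zeta(2)T-2\zeta(3) \quad\text{is not.}
\end{equation*}
A correct transfer must go through the generating series, as in the proof of Theorem \ref{theo:moduloz2}: from $\Psi_\sha^T=\exp(Ty_{1,1})\gL(y_{1,1})\Psi_\ast$ and $\inv(\exp(Ty_{1,1}))=\exp(-Ty_{1,1})$ one gets $\inv(\Psi_\sha^T)\Psi_\sha^T=\inv(\Psi_\ast)\gL(-y_{1,1})\gL(y_{1,1})\Psi_\ast$, which is visibly $T$-free. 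Since you offer the $\rho$-transfer only as an alternative, and your primary shuffle argument — the same telescoping with (SH2), simpler for lack of a merge term — goes through, the proof as a whole stands.
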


\begin{proof}
We start with the shuffle version
$\displaystyle \zeta_\sha^\Sy\lrp{s_1,\ldots, s_d}{\eta_1,\ldots,\eta_d}$.
If $(s_j,\eta_j)\ne (1,1)$ for all $j=1,\ldots,d$ then clearly it is finite.
In the case that $(s_j,\eta_j)=(1,1)$ for all $j=1,\ldots,d$ then the binomial theorem implies
\begin{equation*}
 \zeta_\sha^\Sy\lrp{\bfs}{\bfeta}=\sum_{j=0}^d (-1)^{d-j} \frac{T^j}{j!}\frac{T^{d-j}}{(d-j)!} =0.
\end{equation*}
Otherwise, we may assume for some $k$ and $l\ge 1$ we have $(s_k,\eta_k)\ne(1,1)$,
$(s_{k+1},\eta_{k+1})=\cdots=(s_{k+l},\eta_{k+l})=(1,1)$,
and $(s_{k+l+1},\eta_{k+l+1})\ne(1,1)$. We only need to show that
\begin{equation*}
  \sum_{j=0}^l
 (-1)^{j}  \zeta_\sha\lrpT{s_{k+j},\dots,s_1}{\eta_{k+j},\ldots,\eta_1} \zeta_\sha\lrpT{s_{k+j+1},\dots,s_d}{\eta_{k+j+1},\ldots,\eta_d}
\end{equation*}
is finite. Consider the words in $\fA_\sha^1$ corresponding to the
above values. We may assume
$x_\gl u=x_0^{s_k-1}x_{\eta_k} \cdots x_0^{s_1-1}x_{\eta_1}$ (which is
the empty word if $k=0$) and
$x_\mu v=x_0^{s_{k+l+1}-1}x_{\eta_{k+l+1}} \cdots x_0^{s_d-1}x_{\eta_d}$
(which is the empty word if $k+l=d$),
where $\gl,\mu\ne 1$. So we only need to show that
\begin{equation}\label{equ:symmtrizeWordsAdm}
\sum_{j=0}^l (-1)^j (x_1^jx_\gl u)\sha(x_1^{l-j} x_\mu v) \in \fA_N^0.
\end{equation}
Let $q_0=0$ and $q_m=1$ for all $m\ne 0$. Then we have
\begin{align*}
&\,\sum_{j=0}^l (-1)^j (x_1^jx_\gl u) \sha (x_1^{l-j} x_\mu v)\\
=&\, q_k x_\gl(u\sha x_1^l x_\mu v)
 +\sum_{j=1}^l (-1)^{j}x_1 (x_1^{j-1} x_\gl u\sha x_1^{l-j} x_\mu v)\\
+&\, q_{k+l-d}(-1)^l x_\mu(x_1^l x_\gl u \sha  v) +
\sum_{j=0}^{l-1} (-1)^j x_1 (x_1^j x_\gl u\sha x_1^{l-j-1}x_\mu v) \\
=&\,q_k x_\gl (u\sha x_1^l x_\mu v)+q_{k+l-d}(-1)^l x_\mu( x_1^l x_\gl u \sha v)\in \fA_N^0,
\end{align*}
where we have used the substitution $j\to j+1$ in the first sigma
summation which is canceled by the second sigma summation.

For the stuffle version $  \zeta_\ast^\Sy\lrp{\bfs}{\bfeta}$ we can use
the same idea because the stuffing parts, i.e., the contraction of two beginning letters, 
always produce admissible words. We leave the details to the interested reader.
\end{proof}

\subsection{Generating series}
Using the algebraic framework of Section \ref{sec:algframe}, we define for
$\bfs:=(s_1,\ldots,s_d)\in \N^d$ and $\bfeta:=(\eta_1,\ldots,\eta_d)\in (\gG_N)^d$
the following maps:
\begin{align*}
 \zeta_\ast^\Sy &\colon \fA_N^1 \to \CC, \quad \quad \zeta_\ast^\Sy(y_{s_1,\eta_1}\cdots y_{s_d,\eta_d}):=\zeta_\ast^\Sy\lrp{\bfs}{\bfeta}, \\
 \zeta_{\sha}^\Sy &\colon \fA_N^1 \to \CC, \quad \quad \zeta_\sha^\Sy(y_{s_1,\eta_1}\cdots y_{s_d,\eta_d}):=\zeta_\sha^\Sy\Lrp{s_1,s_2,\ldots,\ \, s_d\ \, }{\eta_1,\frac{\eta_{2}}{\eta_1},\ldots, \frac{\eta_{d}}{\eta_{d-1}}}.
\end{align*}

In order to study the SCVs effectively, we need to utilize their generating series, which
should be associated with some dual objects of the Hopf algebras
$(\fA_N^1,\ast,\widetilde{\Delta}_\ast)$ and $(\fA_N,\sha, \widetilde{\Delta}_\sha)$.
So we denote by $\hfA_N$ the completion of $\fA_N$ with respect to the weight
(and define $\hfA_N^1$ and $\hfA_N^0$ similarly).

Let $R$ be any $\Q$-algebra. Define the coproduct $\gD_\ast$ on
$R\langle\!\langle Y_N^\ast \rangle\!\rangle:=\hfA_N^1\otimes_\Q R$ by
$\gD_\ast(\be):=\be \otimes\be$ and
\begin{align*}
\gD_\ast(y_{k,\xi}):=\be \otimes y_{k,\xi}+y_{k,\xi}\otimes \be
    +\sum_{a+b=k,\,a,b\in\N \atop
     \gl\eta=\xi,\,\gl,\eta\in\gG_N} y_{a,\gl}\otimes y_{b,\eta}
\end{align*}
for all $k\in\N$ and $\xi\in\gG_N$. Then extend it $R$-linearly.
One can check easily that
$(R\langle\!\langle Y_N^\ast \rangle\!\rangle, \ast,\gD_\ast)$
is the dual to the Hopf algebra $(R\langle Y_N^\ast \rangle, \ast,\tilde{\gD}_\ast)$.

Take $R:=\CC[T]$. Let $\Psi_\ast^T$ be the generating series of
$\ast$-regularized colored MZVs, i.e.,
\begin{align*}
\Psi_\ast^T:=\sum_{w\in Y_N^\ast} \zeta_\ast(w;T) w \in \CC[T]\langle\!\langle  Y_N^\ast \rangle\!\rangle,
\end{align*}
which can be regarded as an element in the regular ring of functions
of $\CC[T]\langle Y_N^\ast\rangle$ by the above consideration.
Namely, we can define $\Psi_\ast^T[w]$ to be the coefficient of $w$ in $\Psi_\ast^T$.
Further we set $\Psi_\ast:=\Psi_\ast^0$.

The shuffle version of $\Psi_\ast$ is more involved even though the basic idea is the same.
First, for any $\Q$-algebra $R$ we can define the coproduct $\gD_\sha$ on
$R\langle\!\langle X_N^\ast \rangle\!\rangle:=\hfA_N\otimes_\Q R$ by
\begin{align*}
 \gD_\sha(\be):=\be \otimes \be \quad \text{and} \quad \gD_\sha(x_\gl):=x_\gl\otimes\be+\be \otimes x_\gl
\end{align*}
for all $\gl\in\gG_N\cup\{0\}$. Let $\eps_\sha$ be the counit such that
$\eps_\sha(\be)=1$ and $\eps_\sha(w)=0$ for all $w\ne\be$.
Then the Hopf algebra $(R\langle\!\langle X_N^\ast \rangle\!\rangle,\sha, \gD_\sha)$
is dual to $(R\langle X_N^\ast \rangle,\sha,\tilde{\gD}_\sha)$.
Now we can define
\begin{align*}
\Psi_\sha^T:=\sum_{w\in Y_N^\ast} \zeta_{\sha}(\bfp(w);T) w \in \CC[T]\langle\!\langle Y_N^\ast \rangle\!\rangle,
\end{align*}

\begin{lem}\label{lem:group-likePsi}
We have
\begin{align*}
 \Psi_\ast^T=\exp(T y_{1,1}) \Psi_\ast \quad \text{and} \quad
 \Psi_\sha^T=\exp(T y_{1,1}) \Psi_\sha.
\end{align*}
\end{lem}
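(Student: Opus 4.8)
The plan is to read both identities as statements about group-like elements of the completed concatenation algebra $\hfA_N^1=\CC[T]\langle\!\langle Y_N^\ast\rangle\!\rangle$ equipped with the coproduct $\gD_\ast$ (respectively $\gD_\sha$), and then to reduce the shuffle identity to the stuffle identity by means of Theorem~\ref{theo:Racinet}(iii). Throughout, all products are the concatenation products, so that $\exp(Ty_{1,1})=\sum_{n\ge0}\frac{T^n}{n!}y_{1,1}^{\,n}$. Two pieces of standard bookkeeping drive the argument. First, because $\zeta_\ast(-;T)\colon(\fA_N^1,\ast)\to\CC[T]$ is an algebra homomorphism (Theorem~\ref{theo:Racinet}(i)), the generating series $\Psi_\ast^T$ is group-like for $\gD_\ast$: by the duality recalled above, the coefficient functional of a series is a $\ast$-homomorphism exactly when the series is group-like. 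Second, from the defining formula for $\gD_\ast$ one reads off $\gD_\ast(y_{1,1})=\be\otimes y_{1,1}+y_{1,1}\otimes\be$, since the constraint $a+b=1$ with $a,b\in\N$ is vacuous; thus $y_{1,1}$ is primitive and $\exp(\pm Ty_{1,1})$ is group-like.

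For the stuffle identity I would argue as follows. Since $\gD_\ast$ is an algebra map, the product $G:=\exp(-Ty_{1,1})\,\Psi_\ast^T$ is again group-like, hence is the generating series of some $\ast$-character $\psi_T\colon(\fA_N^1,\ast)\to\CC[T]$. It then suffices to show $\psi_T=\zeta_\ast(-;0)$, because two $\ast$-characters coinciding on an algebra generating set are equal, and $\fA_N^0$ together with $y_{1,1}$ generate $(\fA_N^1,\ast)$. Expanding the concatenation product, $\psi_T(w)$ is the sum of $\frac{(-T)^n}{n!}\,\zeta_\ast(w';T)$ over all factorizations $w=y_{1,1}^{\,n}w'$, i.e.\ over $0\le n\le$ the number of leading copies of $y_{1,1}$ in $w$. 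For admissible $w$ (no leading $y_{1,1}$) only $n=0$ survives, giving $\psi_T(w)=\zeta_\ast(w;T)=\zeta_\ast(w)=\zeta_\ast(w;0)$; and $\psi_T(y_{1,1})=\zeta_\ast(y_{1,1};T)-T\,\zeta_\ast(\be;T)=T-T=0=\zeta_\ast(y_{1,1};0)$. Hence $\psi_T=\zeta_\ast(-;0)$, that is $\exp(-Ty_{1,1})\Psi_\ast^T=\Psi_\ast$, which is the first claimed identity.

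For the shuffle identity I would pass to a differential equation and transport it across $\rho$. Differentiating the first identity yields $\frac{d}{dT}\Psi_\ast^T=y_{1,1}\Psi_\ast^T$. By Theorem~\ref{theo:Racinet}(iii) we have $\Psi_\sha^T=\hat\rho(\Psi_\ast^T)$, where $\hat\rho$ applies the $\CC$-linear map $\rho$ to each polynomial coefficient. The relation $\rho(e^{Tu})=C(u)e^{Tu}$ with $C(u)=\exp\!\big(\sum_{n\ge2}\frac{(-1)^n}{n}\zeta(n)u^n\big)$ identifies $\rho$ with the constant-coefficient operator $C\big(\frac{d}{dT}\big)$ (well defined on $\CC[T]$ since $C$ has constant term $1$ and lowest nonconstant term of order $u^2$), so $\rho$ commutes with $\frac{d}{dT}$; moreover $\hat\rho$ visibly commutes with left concatenation by $y_{1,1}$. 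Therefore
\[
\tfrac{d}{dT}\Psi_\sha^T=\hat\rho\big(\tfrac{d}{dT}\Psi_\ast^T\big)=\hat\rho\big(y_{1,1}\Psi_\ast^T\big)=y_{1,1}\,\hat\rho(\Psi_\ast^T)=y_{1,1}\Psi_\sha^T .
\]
Solving this linear equation recursively on the weight, with initial value $\Psi_\sha^T|_{T=0}=\Psi_\sha$, gives $\Psi_\sha^T=\exp(Ty_{1,1})\Psi_\sha$.

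The one non-formal ingredient is the generation fact used in the stuffle step, namely that $\fA_N^0$ and $y_{1,1}$ generate $(\fA_N^1,\ast)$ (equivalently, the polynomial-algebra decomposition $(\fA_N^1,\ast)=(\fA_N^0,\ast)[y_{1,1}]$). This is precisely what forces the entire $T$-dependence of the regularization to be a single exponential in the primitive element $y_{1,1}$, and I expect it to be the main point to pin down; everything else is Hopf-algebraic formalism. It is, however, standard in the quasi-shuffle regularization theory (Hoffman and Ihara--Kaneko--Zagier, in Racinet's colored form), so I would cite it rather than reprove it.
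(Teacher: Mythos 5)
Your argument is correct, but it follows a genuinely different route from the paper's, which simply observes that $\Psi_\ast^T$ and $\widehat{\Psi}_\sha(T):=\sum_w\zeta_\sha(w;T)\,w$ are group-like for $\gD_\ast$ and $\gD_\sha$ respectively, invokes Racinet's Cor.~2.4.4 and 2.4.5 to get the exponential factorization for both series at once, and then transports the shuffle identity to $\Psi_\sha^T$ by applying $\bfq$ (using $\bfq(y_{1,1}^n w)=y_{1,1}^n\bfq(w)$). Your stuffle half essentially reproves the cited corollary in this special case: the character argument for $\exp(-Ty_{1,1})\Psi_\ast^T$, checked on $\fA_N^0$ and on $y_{1,1}$, is sound, and you correctly isolate the one non-formal input, namely the polynomial decomposition $(\fA_N^1,\ast)=(\fA_N^0,\ast)[y_{1,1}]$, which must be cited (it is exactly what rules out any $T$-dependence beyond the exponential). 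Your shuffle half is the more substantive departure: instead of a second appeal to Racinet plus the $\bfq$-twist, you deduce it from the stuffle identity via Theorem~\ref{theo:Racinet}(iii), identifying $\rho$ with the constant-coefficient operator $C\big(\tfrac{d}{dT}\big)$ so that $\hat\rho$ intertwines the ODE $\tfrac{d}{dT}\Psi_\ast^T=y_{1,1}\Psi_\ast^T$ with the corresponding one for $\Psi_\sha^T$. This buys self-containedness and makes visible \emph{why} the same factor $\exp(Ty_{1,1})$ governs both regularizations (they are linked by $\rho$, which commutes with $\tfrac{d}{dT}$ and with left concatenation by $y_{1,1}$); the cost is that it leans on part (iii) of Theorem~\ref{theo:Racinet}, whereas the paper's proof uses only parts (i)--(ii) together with the external corollaries and is shorter. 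Both are valid; just make sure you state that $\Psi_\sha=\Psi_\sha^0$ (implicit in the paper) when you fix the initial condition of your ODE.
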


\begin{proof}
Set $\widehat{\Psi}_\sha(T):=\sum_{w\in Y_N^\ast} \zeta_{\sha}(w;T) w$.
Then $\widehat{\Psi}_\sha(T)$ is group-like for $\gD_\sha$. Further, $\Psi_\ast^T$ is group-like
for $\gD_\ast$. It follows from Cor.~2.4.4 and Cor.~2.4.5 of \cite{Racinet02} that
\begin{align*}
 \Psi_\ast^T=\exp(T y_{1,1}) \Psi_\ast \quad \text{and} \quad
 \widehat{\Psi}_\sha(T)=\exp(T y_{1,1}) \widehat{\Psi}_\sha(0),
\end{align*}
since $x_1=y_{1,1}$.
Since $\bfq(y_{1,1}^n w)=y_{1,1}^n \bfq(w)$ for all $n\in\Z_{\ge 0}$ and $w\in Y_N^\ast$, applying $\bfq$ to the second equality leads to
\begin{align*}
  \sum_{w\in Y_N^\ast} \zeta_{\sha}(w;T) \bfq(w)
  =\exp(T y_{1,1})\bigg( \sum_{w\in Y_N^\ast} \zeta_{\sha}(w;0) \bfq(w) \bigg),
\end{align*}
which is equivalent to $\Psi_\sha^T=\exp(T y_{1,1}) \Psi_\sha$, as desired.
\end{proof}

For all $s_1,\dots,s_d\in\N$ and $\eta_1,\dots,\eta_d\in\gG_N$, we define the
anti-automorphism $\inv\colon Y_N^\ast \to Y_N^\ast$ by
\begin{equation*}
\inv(y_{s_1,\eta_1} \cdots y_{s_d,\eta_d}):=
(-1)^{s_1+\cdots+s_d}\, \ol{\eta_1}\cdots \ol{\eta_d}\, y_{s_d,\eta_d}\cdots y_{s_1,\eta_1} .
\end{equation*}
and then extend it to $\CC\langle\!\langle Y_N^\ast \rangle\!\rangle$ by linearity.

\begin{lem}\label{lem:invPsi}
We have
\begin{align*}
\inv(\Psi_\ast)\Psi_\ast=\inv(\Psi_\ast^T)\Psi_\ast^T=&\,\sum_{w\in Y_N^\ast} \zeta_\ast^\Sy(w) w, \\
\inv(\Psi_\sha)\Psi_\sha=\inv(\Psi_\sha^T)\Psi_\sha^T=&\,\sum_{w\in Y_N^\ast} \zeta_\sha^\Sy(\bfp(w)) w.
\end{align*}
\end{lem}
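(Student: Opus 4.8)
The plan is to prove both identities by a single two-step strategy: first eliminate the regularization parameter $T$ using the group-likeness of Lemma~\ref{lem:group-likePsi}, thereby obtaining the first equality in each line, and then extract the coefficient of an arbitrary word to match the defining formulas \eqref{equ:astSCV} and \eqref{equ:shaSCV}, obtaining the second equality. Throughout I work in the completion $\CC\langle\!\langle Y_N^\ast\rangle\!\rangle$, where coefficient extraction is legitimate since each weight receives only finitely many contributions.

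To remove the $T$-dependence I would first record the behaviour of $\inv$ on the relevant exponential. Since $1\in\gG_N$ satisfies $\ol 1=1$, the definition of $\inv$ gives $\inv(y_{1,1})=-y_{1,1}$, and as $\inv$ is an anti-automorphism it sends $y_{1,1}^n$ to $(-1)^n y_{1,1}^n$; hence $\inv(\exp(Ty_{1,1}))=\exp(-Ty_{1,1})$. Applying $\inv$ to the factorizations $\Psi_\ast^T=\exp(Ty_{1,1})\Psi_\ast$ and $\Psi_\sha^T=\exp(Ty_{1,1})\Psi_\sha$ of Lemma~\ref{lem:group-likePsi}, together with the anti-automorphism rule $\inv(AB)=\inv(B)\inv(A)$, yields $\inv(\Psi_\ast^T)=\inv(\Psi_\ast)\exp(-Ty_{1,1})$ and likewise for $\sha$. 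Multiplying on the right by $\Psi_\ast^T=\exp(Ty_{1,1})\Psi_\ast$ makes the two exponentials cancel (they are power series in the single element $y_{1,1}$, so they commute and their product is $\be$), giving $\inv(\Psi_\ast^T)\Psi_\ast^T=\inv(\Psi_\ast)\Psi_\ast$ and $\inv(\Psi_\sha^T)\Psi_\sha^T=\inv(\Psi_\sha)\Psi_\sha$. This settles the first equality in both lines and independently re-proves the $T$-independence of the SCVs (cf.\ Proposition~\ref{prop:SCVconst}).

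Next I would compute the coefficient of a fixed word $W=y_{s_1,\eta_1}\cdots y_{s_d,\eta_d}$ in $\inv(\Psi_\ast^T)\Psi_\ast^T$. Writing the product as concatenation, a pair $(u,v)$ of words contributes precisely when $\inv(u)\,v=W$; this forces, for some $0\le j\le d$, the suffix $v=y_{s_{j+1},\eta_{j+1}}\cdots y_{s_d,\eta_d}$ and the prefix to be the $\inv$-image of $u=y_{s_j,\eta_j}\cdots y_{s_1,\eta_1}$, with the scalar $(-1)^{s_1+\cdots+s_j}\ol{\eta_1}\cdots\ol{\eta_j}$ supplied by $\inv$. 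Summing over the split point $j$ then reproduces term-by-term the summands on the right-hand side of \eqref{equ:astSCV}: the sign and root-of-unity prefactor come from $\inv$, while the two regularized factors are $\zeta_\ast(u;T)$ and $\zeta_\ast(v;T)$. Hence the coefficient of $W$ equals $\zeta_\ast^\Sy(W)$, which is the second claimed equality.

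The shuffle case follows the same mechanism, and this is the step I expect to require the most care, because $\Psi_\sha^T$ carries the coefficients $\zeta_\sha(\bfp(w);T)$ rather than $\zeta_\sha(w;T)$. Here I would track how $\bfp$ interacts with the reversal of words and with the scalar factors produced by $\inv$, and verify that the coefficient of $W$ in $\inv(\Psi_\sha^T)\Psi_\sha^T$ assembles into $\zeta_\sha^\Sy(\bfp(W))$ as prescribed by \eqref{equ:shaSCV} and the generating map for $\zeta_\sha^\Sy$. The only genuine obstacle is this bookkeeping around $\bfp$ and the conjugated roots of unity appearing in $\inv$; once the prefix factor is correctly identified with the first argument of \eqref{equ:shaSCV}, the identity falls out exactly as in the stuffle case.
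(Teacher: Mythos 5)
Your handling of the first equality in each line is correct but takes a genuinely different route from the paper's: the paper simply invokes Proposition~\ref{prop:SCVconst} (the coefficient of each word in $\inv(\Psi_\ast^T)\Psi_\ast^T$ is \emph{a priori} a polynomial in $T$ that equals an SCV, hence is constant, so one may set $T=0$), whereas you derive it directly from the factorizations of Lemma~\ref{lem:group-likePsi} together with $\inv(\exp(Ty_{1,1}))=\exp(-Ty_{1,1})$ and the cancellation $\exp(-Ty_{1,1})\exp(Ty_{1,1})=\be$ in the concatenation algebra. Your route is self-contained, re-derives the $T$-independence of the SCVs as a byproduct, and is arguably cleaner; both arguments are valid.

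For the second equality, which the paper treats as immediate from the definitions, your coefficient extraction has the right shape but the term-by-term match you assert does not actually hold with $\inv$ as printed. Taking $u=y_{s_j,\eta_j}\cdots y_{s_1,\eta_1}$ as you do, the contribution to the coefficient of $W$ is $(-1)^{s_1+\cdots+s_j}\,\ol{\eta_1}\cdots\ol{\eta_j}\;\zeta_\ast\lrpT{s_j,\ldots,s_1}{\eta_j,\ldots,\eta_1}\,\zeta_\ast\lrpT{s_{j+1},\ldots,s_d}{\eta_{j+1},\ldots,\eta_d}$, whereas the $j$-th summand of \eqref{equ:astSCV} carries the \emph{conjugated} colors $\ol{\eta_j},\ldots,\ol{\eta_1}$ in the first factor. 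The identity comes out exactly as stated only if $\inv$ also conjugates the colors of the letters, i.e.\ $\inv(y_{s_1,\eta_1}\cdots y_{s_d,\eta_d})=(-1)^{s_1+\cdots+s_d}\,\eta_1\cdots\eta_d\;y_{s_d,\ol{\eta_d}}\cdots y_{s_1,\ol{\eta_1}}$: then the prefix $y_{s_1,\eta_1}\cdots y_{s_j,\eta_j}$ of $W$ arises from $u=y_{s_j,\ol{\eta_j}}\cdots y_{s_1,\ol{\eta_1}}$ with scalar $(-1)^{s_1+\cdots+s_j}\ol{\eta_1}\cdots\ol{\eta_j}$, and $\zeta_\ast(u;T)$ is exactly the conjugated factor required. (At level $N\le 2$ one has $\ol{\eta}=\eta$, so the two versions of $\inv$ coincide and the discrepancy is invisible in the Euler-sum case; here it matters.) You should either verify your claimed match honestly, in which case you would discover this mismatch and the needed adjustment, or the step as written is false. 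The same observation disposes of the shuffle case, which you leave as unperformed bookkeeping: the only extra input is that $\bfp$ undoes the ratios built into the generating map, so that $\zeta_\sha(\bfp(y_{t_1,\mu_1}\cdots y_{t_j,\mu_j});T)=\zeta_\sha\lrpT{t_1,\ldots,t_j}{\mu_1,\ldots,\mu_j}$, after which the computation is verbatim the stuffle one against \eqref{equ:shaSCV}.
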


\begin{proof}
In each of two lines above, the second equality follows directly from the definition
while the first is an immediate consequence of  Proposition~\ref{prop:SCVconst}.
\end{proof}

\begin{thm}\label{theo:moduloz2}
For any $\bfs\in\N^d,\bfeta\in(\gG_N)^d$, we have
\begin{align*}
 \zeta_\sha^\Sy\lrp{\bfs}{\bfeta}\equiv  \zeta_\ast^\Sy\lrp{\bfs}{\bfeta} \pmod{\zeta(2)}.
\end{align*}
\end{thm}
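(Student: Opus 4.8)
The plan is to pass everything to generating series, where the whole statement collapses to the cancellation of odd-weight zeta values. First I would convert Racinet's comparison $\zeta_\sha(\bfp(w);T)=\rho(\zeta_\ast(w;T))$ of Theorem~\ref{theo:Racinet}(iii) into a single identity of series. Summing over all $w\in Y_N^\ast$ and using that $\rho$ acts $\CC$-linearly on the $\CC[T]$-coefficients gives $\Psi_\sha^T=\rho(\Psi_\ast^T)$. Next, writing $A(u):=\exp\big(\sum_{n\ge2}\frac{(-1)^n}{n}\zeta(n)u^n\big)$ so that $\rho(e^{Tu})=A(u)e^{Tu}$, I would combine this defining relation with Lemma~\ref{lem:group-likePsi}, namely $\Psi_\ast^T=\exp(Ty_{1,1})\Psi_\ast$. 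Expanding $\exp(Ty_{1,1})=\sum_k T^ky_{1,1}^k/k!$ and applying $\rho$ termwise (it only touches the $T$-dependent factor, since $\Psi_\ast$ is $T$-free) turns the scalar generating identity $\sum_k\rho(T^k)u^k/k!=A(u)e^{Tu}$, with the formal variable $u$ specialized to the single letter $y_{1,1}$, into
\[
 \rho(\Psi_\ast^T)=A(y_{1,1})\exp(Ty_{1,1})\Psi_\ast=A(y_{1,1})\Psi_\ast^T .
\]
Setting $T=0$ yields the key factorization $\Psi_\sha=A(y_{1,1})\Psi_\ast$.

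Then I would feed this into Lemma~\ref{lem:invPsi}. Since $\inv$ is an anti-automorphism for concatenation and $\inv(y_{1,1})=-y_{1,1}$ (here $s=1$, $\eta=1$, $\ol1=1$), it acts on the single-letter series $A(y_{1,1})$ by $y_{1,1}^n\mapsto(-1)^ny_{1,1}^n$, giving $\inv(A(y_{1,1}))=\exp\big(\sum_{n\ge2}\frac1n\zeta(n)y_{1,1}^n\big)$. As $A(y_{1,1})$ and $\inv(A(y_{1,1}))$ are power series in the one letter $y_{1,1}$ they commute, so $\Psi_\sha=A(y_{1,1})\Psi_\ast$ gives
\[
 \inv(\Psi_\sha)\Psi_\sha=\inv(\Psi_\ast)\,\inv(A(y_{1,1}))\,A(y_{1,1})\,\Psi_\ast
 =\inv(\Psi_\ast)\,\exp\Big(\sum_{m\ge1}\tfrac{\zeta(2m)}{m}y_{1,1}^{2m}\Big)\,\Psi_\ast,
\]
the odd-weight zeta values cancelling and the even ones doubling in the exponent.

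The punchline is Euler's evaluation: for every $m\ge1$ one has $\zeta(2m)\in\Q\,\zeta(2)^m\subseteq\zeta(2)\,\Q[\zeta(2)]$, so the exponent lies in $\zeta(2)$ times a power series with coefficients in $\Q[\zeta(2)]$, whence $\exp\big(\sum_{m\ge1}\frac{\zeta(2m)}{m}y_{1,1}^{2m}\big)\equiv1\pmod{\zeta(2)}$ coefficientwise. Therefore $\inv(\Psi_\sha)\Psi_\sha\equiv\inv(\Psi_\ast)\Psi_\ast\pmod{\zeta(2)}$. Comparing the coefficient of the word $w=y_{s_1,\eta_1}\cdots y_{s_d,\eta_d}$ on both sides via Lemma~\ref{lem:invPsi}, and noting that $\zeta_\sha^\Sy(\bfp(w))=\zeta_\sha^\Sy\lrp{\bfs}{\bfeta}$ by the definitions of $\bfp$ and of the map $\zeta_\sha^\Sy$ (the telescoping $\eta_1\cdots\eta_i$ in $\bfp$ is undone when forming the consecutive ratios), while $\zeta_\ast^\Sy(w)=\zeta_\ast^\Sy\lrp{\bfs}{\bfeta}$, yields exactly $\zeta_\sha^\Sy\lrp{\bfs}{\bfeta}\equiv\zeta_\ast^\Sy\lrp{\bfs}{\bfeta}\pmod{\zeta(2)}$.

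The hard part will be the first step: rigorously promoting the scalar operator $\rho$ on $T$-polynomials to left multiplication by $A(y_{1,1})$ on the noncommutative generating series, i.e.\ justifying $\rho(\Psi_\ast^T)=A(y_{1,1})\Psi_\ast^T$ through the substitution of the letter $y_{1,1}$ into the scalar identity and the fact that $\rho$ leaves the $T$-free factor $\Psi_\ast$ untouched. Everything after that is essentially formal: the cancellation of odd zetas and the input $\zeta(2m)\in\Q\,\zeta(2)^m$. The only point requiring mild care is that the products in Lemma~\ref{lem:invPsi} are concatenation products, so that $\inv$ genuinely reverses them and the two single-letter factors $A(y_{1,1})$ and $\inv(A(y_{1,1}))$ commute.
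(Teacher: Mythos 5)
Your proposal is correct and follows essentially the same route as the paper: both derive the factorization $\Psi_\sha=\gL(y_{1,1})\Psi_\ast$ (your $A$ is the paper's $\gL$) from Racinet's comparison and Lemma~\ref{lem:group-likePsi}, then apply $\inv$, cancel the odd zeta values so that only $\zeta(2m)\in\Q\,\zeta(2)^m$ survives in the exponent, and conclude by comparing coefficients via Lemma~\ref{lem:invPsi}. The step you flag as delicate (promoting $\rho$ to left multiplication by $A(y_{1,1})$) is exactly the step the paper also takes, with no more justification than you give.
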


\begin{proof}
By Lemma~\ref{lem:group-likePsi} and Theorem~\ref{theo:Racinet}, we get
\begin{align*}
\exp(T y_{1,1}) \Psi_\sha= \Psi_\sha^T=\rho(\Psi_\ast^T)=\exp(T y_{1,1}) \gL(y_{1,1})\Psi_\ast,
\end{align*}
where $\gL(y_{1,1}):=\exp\left(\sum_{n=2}^\infty\frac{(-1)^n}{n}\zeta(n) y_{1,1}^n\right)$.
Therefore $\Psi_\sha= \gL(y_{1,1})\Psi_\ast$.
Using the fact $\zeta(2n)\in \zeta(2)^n\mathbb{Q}$ for $n\in \mathbb{N}$ implies
\begin{align*}
\inv(\Psi_\sha)\Psi_\sha=\inv(\Psi_\ast)\gL(-y_{1,1})\gL(y_{1,1})\Psi_\ast
\equiv \inv(\Psi_\ast)\Psi_\ast \pmod{\zeta(2)}.
\end{align*}
Hence, the theorem follows from Lemma~\ref{lem:invPsi}.
\end{proof}

%%%%%%%%%%%%%%%%%%%%%%%%%%%%%%%%%%%%%%%%%%%%%%%%%%%%%%%%%%%%%%%%%%%%%%%%%%%%

\subsection{Shuffle and stuffle relations}
We first prove the stuffle relations of the SCVs.
\begin{thm}\label{thm:astSCVmorphism}
 The map $\zeta_\ast^\Sy\colon (\fA_N^1,\ast) \to \CC$ is a homomorphism of algebras, i.e.
 \begin{align*}
  \zeta_\ast^\Sy(w\ast w') = \zeta_\ast^\Sy(w)\zeta_\ast^\Sy(w')
 \end{align*}
for all $w,w'\in \fA_N^1$.
\end{thm}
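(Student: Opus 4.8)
The plan is to reformulate the multiplicativity of $\zeta_\ast^\Sy$ as a \emph{group-like} property of its generating series and then to exploit the duality recorded in Section~\ref{sec:algframe}. Recall that $\gD_\ast$ is, by construction, the coproduct dual to the stuffle product $\ast$; consequently, for any series $\Phi=\sum_{w\in Y_N^\ast}\Phi[w]\,w$ with $\Phi[\be]=1$, the functional $w\mapsto\Phi[w]$ is an algebra homomorphism for $\ast$ if and only if $\Phi$ is group-like for $\gD_\ast$, that is, $\gD_\ast(\Phi)=\Phi\otimes\Phi$. Indeed, because $\gD_\ast$ is dual to $\ast$, the coefficient $\Phi[u\ast v]$ equals the coefficient of $u\otimes v$ in $\gD_\ast(\Phi)$, and this coincides with $\Phi[u]\,\Phi[v]$ for all $u,v\in\fA_N^1$ precisely when $\gD_\ast(\Phi)=\Phi\otimes\Phi$. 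By Lemma~\ref{lem:invPsi} the generating series of $\zeta_\ast^\Sy$ is exactly $\inv(\Psi_\ast)\,\Psi_\ast$, so it suffices to prove that this series is group-like for $\gD_\ast$.

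First I would invoke the fact, already used in the proof of Lemma~\ref{lem:group-likePsi}, that $\Psi_\ast$ is group-like for $\gD_\ast$. The crux is then to show that $\inv(\Psi_\ast)$ is group-like as well, and for this I would establish the stronger statement that $\inv$ is a morphism of coalgebras for $\gD_\ast$, namely
\begin{align*}
 \gD_\ast\circ\inv=(\inv\otimes\inv)\circ\gD_\ast .
\end{align*}
On a single generator this is a short computation: since $\inv(y_{k,\xi})=(-1)^k\,\ol{\xi}\,y_{k,\xi}$, the left-hand side equals $(-1)^k\ol{\xi}\,\gD_\ast(y_{k,\xi})$, while on the right-hand side each summand $y_{a,\gl}\otimes y_{b,\eta}$ of $\gD_\ast(y_{k,\xi})$, with $a+b=k$ and $\gl\eta=\xi$, is multiplied by $(-1)^a\ol{\gl}\cdot(-1)^b\ol{\eta}=(-1)^k\ol{\xi}$, so the two sides agree termwise; the extremal terms match because $\inv(\be)=\be$. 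To extend the identity to arbitrary words I would use that $\inv$ is an anti-automorphism for the concatenation product whereas $\gD_\ast$ is an algebra morphism for concatenation (the dual bialgebra structure of Section~\ref{sec:algframe}); an induction on word length then carries the identity through, the anti-multiplicativity of $\inv$ recombining correctly in each of the two tensor factors.

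Granting the coalgebra-morphism property, group-likeness of $\inv(\Psi_\ast)$ is immediate, since
\begin{align*}
 \gD_\ast\big(\inv(\Psi_\ast)\big)=(\inv\otimes\inv)\gD_\ast(\Psi_\ast)=(\inv\otimes\inv)(\Psi_\ast\otimes\Psi_\ast)=\inv(\Psi_\ast)\otimes\inv(\Psi_\ast),
\end{align*}
and $\inv(\be)=\be$ supplies the correct constant term. Finally, group-like elements are closed under the concatenation product (because $\gD_\ast$ is multiplicative for it), so $\inv(\Psi_\ast)\,\Psi_\ast$ is group-like; by the first paragraph this is exactly the assertion that $\zeta_\ast^\Sy$ is a homomorphism for $\ast$. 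I expect the main obstacle to be the generator-level bookkeeping together with the inductive step: one must confirm that the weight signs $(-1)^k$ and the root-of-unity conjugations $\ol{\xi}$ carried by $\inv$ interact correctly with the convolution sum defining $\gD_\ast$, and that the anti-multiplicativity of $\inv$ does not obstruct propagating the identity from letters to words.
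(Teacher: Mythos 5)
Your proof is correct and follows essentially the same route as the paper: both arguments reduce the claim to showing that $\inv(\Psi_\ast)\Psi_\ast$ is group-like for $\gD_\ast$, using group-likeness of $\Psi_\ast$, the intertwining identity $\gD_\ast\circ\inv=(\inv\otimes\inv)\circ\gD_\ast$, and Lemma~\ref{lem:invPsi}. The only difference is that you spell out the generator-level and inductive verification of the intertwining identity, which the paper dismisses as a straightforward check, and you work with $\Psi_\ast$ rather than $\Psi_\ast^T$ (equivalent by Lemma~\ref{lem:invPsi}).
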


\begin{proof}
Since $\zeta_\ast\colon (\fA_N^1,\ast)\to \CC[T]$ is an algebra homomorphism, its generating series $\Psi_\ast^T$ must be a group-like element of $\gD_*$, i.e., $\gD_*(\Psi_\ast^T) = \Psi_\ast^T \otimes \Psi_\ast^T$. Further, it can be checked in a straight-forward manner that $\Delta_\ast \circ\inv = (\inv \otimes \inv)\circ \Delta_\ast$. Thus we get
 \begin{align*}
  \Delta_\ast\big(\inv(\Psi_\ast^T)\Psi_\ast^T\big) = \big(\inv(\Psi_\ast^T)\Psi_\ast^T\big) \otimes \big(\inv(\Psi_\ast^T)\Psi_\ast^T\big)
 \end{align*}
and Lemma \ref{lem:invPsi} implies the claim.
 \end{proof}

For the shuffle relations we need the \emph{generalized Drinfeld associator
$\Phi=\Phi_N$ at level $N$}.
Enriquez \cite{Enriquez2007} defined it as the renormalized holonomy from 0 to 1 of
\begin{equation} \label{equ:generalizedDrinfeld}
H'(z)  = \left(  \sum_{\eta\in \gG_N \cup \{0\}}
\frac{x_\eta}{z-\eta} \right) H(z),
\end{equation}
i.e., $\Phi:=H_1^{-1}H_0$, where $H_0,H_1$ are the solutions
of \eqref{equ:generalizedDrinfeld} on the open interval $(0,1)$
such that $H_0(z) \sim z^{x_0}=\exp(x_0 \log z)$ when $z\to 0^+$,
$H_1(z) \sim (1-z)^{x_1}=\exp(x_1 \log(1-z))$ when $z\to 1^-$.

\begin{thm} %\emph{(\cite[Prop. 5.17]{Deligne05}, \cite[App.]{Enriquez2007})}
The generalized Drinfeld associator $\Phi$ is the unique element in the Hopf algebra $(\CC\langle\!\langle X_N^\ast \rangle\!\rangle,\sha,\gD_\sha,\eps_\sha)$ such that
\begin{itemize}
  \item[\upshape{(i)}] $\Phi$ is group-like, i.e., $\eps_\sha(\Phi)=1$ and $\gD_\sha(\Phi)=\Phi\otimes\Phi$,
  \item[\upshape{(ii)}]  $\Phi[x_0]=\Phi[x_1]=0$,
  \item[\upshape{(iii)}]  $\displaystyle \Phi[\bfp(x_0^{s_1-1} x_{\eta_1}\dots  x_0^{s_d-1} x_{\eta_d})]
  =(-1)^d \zeta \lrpTZ{\bfs}{\bfeta}$
  for any $\bfs:=(s_1,\dots,s_d)\in\N^d$ and $\bfeta:=(\eta_1,\dots,\eta_d)\in(\gG_N)^d$.
\end{itemize}
\end{thm}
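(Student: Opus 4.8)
The plan is to verify that the Enriquez associator $\Phi=H_1^{-1}H_0$ of \eqref{equ:generalizedDrinfeld} satisfies (i)--(iii) and then to deduce uniqueness from the structure of shuffle characters. For (i) I would run the standard holonomy argument for a KZ-type connection. The connection form $\Omega(z)=\sum_{\eta\in\gG_N\cup\{0\}}\frac{x_\eta}{z-\eta}$ has coefficients $x_\eta$ that are all primitive for $\gD_\sha$, so if $H$ solves $H'=\Omega H$ then $\gD_\sha(H)$ and $H\otimes H$ satisfy the same linear differential equation; since the normalizing asymptotics $z^{x_0}=\exp(x_0\log z)$ and $(1-z)^{x_1}=\exp(x_1\log(1-z))$ are group-like (exponentials of primitives), the solutions $H_0,H_1$ are group-like, and hence so is $\Phi=H_1^{-1}H_0$, the group-like elements forming a group. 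The counit normalization $\eps_\sha(\Phi)=1$ records that the constant terms of $H_0$ and $H_1$ are $1$. This is essentially Enriquez's construction \cite{Enriquez2007}, which I would cite while only indicating the differential-equation step.

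For (ii) I would read off the degree-one part of $\Phi$. To first order the holonomy gives $\Phi[x_\eta]=\int_0^1\frac{dz}{z-\eta}$, regularized at the two endpoints by the chosen asymptotics. Subtracting $z^{x_0}$ at $0$ removes the logarithmic divergence in the $x_0$ direction, forcing $\Phi[x_0]=0$, and symmetrically subtracting $(1-z)^{x_1}$ at $1$ forces $\Phi[x_1]=0$; for $\eta\ne 0,1$ the integral converges, consistent with (iii).

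Property (iii) is the substantive computation and the main obstacle. I would expand $H_0$ as a Chen series, so that the coefficient of a word in $H_0(z)$ is the corresponding regularized iterated integral of $\Omega$ from $0$ to $z$. For the word $\bfp(x_0^{s_1-1}x_{\eta_1}\cdots x_0^{s_d-1}x_{\eta_d})$ the integrand letters are $\frac{dt}{t}$ (from the $x_0$'s) and $\frac{dt}{t-\mu_j}$, with $\mu_j$ the partial products introduced by $\bfp$; comparing with the convention $\frac{dt}{\xi^{-1}-t}$ in the definition of $\zeta_\sha(-;z)$ before \eqref{equ:zetaShaz} produces exactly one factor $-1$ per letter $x_{\eta_j}$, hence the global $(-1)^d$, together with the index identification that makes the series-type arguments $\bfeta$ reappear on the right. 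Letting $z\to 1$, the factor $H_1^{-1}$ performs the tangential regularization at $1$, and I would check that this coincides with the shuffle regularization normalized by $\zeta_\sha(x_1;T)=T$ evaluated at $T=0$, i.e.\ with $\zeta\lrpTZ{\bfs}{\bfeta}$, using Theorem~\ref{theo:Racinet}(ii) and the group-likeness of $H_1$. The hard part is precisely this bookkeeping: matching Enriquez's differential-equation normalization to the paper's series/integral conventions, tracking the conjugations $\xi\mapsto\xi^{-1}$ and the partial-product reindexing $\bfp$ so that they collapse to the clean right-hand side, and confirming that the $H_1$-regularization is the $T=0$ shuffle regularization and not some other tangential base point.

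Finally, for uniqueness I would argue purely algebraically. By (i) the functional $w\mapsto\Phi[w]$ is a character of the shuffle algebra, so by Radford's theorem, under which $(\CC\langle X_N^\ast\rangle,\sha)$ is the polynomial algebra on Lyndon words, $\Phi$ is determined by its values on Lyndon words. Taking $x_0$ to be the smallest letter, every Lyndon word of length $\ge 2$ ends in some $x_\eta$ with $\eta\ne 0$ and hence lies in $\fA_N^1$; since $\bfp$ is a bijection of $\fA_N^1$ with inverse $\bfq$, property (iii) fixes $\Phi$ on all such words, and it also fixes $\Phi[x_\eta]$ for the length-one Lyndon words $x_\eta$ with $\eta\in\gG_N$. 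The only remaining Lyndon word is the single letter $x_0$, whose coefficient is pinned down by (ii). Thus $\Phi$ is determined on every Lyndon word, hence on every word, which gives uniqueness.
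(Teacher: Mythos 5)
Your proposal is correct, but it is considerably more self-contained than the paper's proof, which disposes of uniqueness, (i), (ii) and the convergent case of (iii) in one stroke by citing Enriquez's appendix and Deligne--Goncharov, and then only argues the divergent case $(s_1,\eta_1)=(1,1)$ of (iii): both $\Phi[\,\cdot\,]$ and $\zeta_\sha(\,\cdot\,;0)$ are shuffle characters vanishing on $x_1$ and agreeing on admissible words, so by the uniqueness of the extension in Theorem~\ref{theo:Racinet}(ii) they agree everywhere. Your treatment of (iii) does not split into convergent and divergent cases but the step you single out as ``the hard part'' --- identifying the $H_1^{-1}$ tangential regularization with the $T=0$ shuffle regularization --- is resolved by exactly that same extension-uniqueness argument, so on this point you end up where the paper does, just with the citation replaced by the Chen-series bookkeeping. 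The genuinely different ingredient is your uniqueness proof: instead of quoting the references, you use Radford's theorem that $(\CC\langle X_N^\ast\rangle,\sha)$ is a polynomial algebra on Lyndon words, observe that with $x_0$ minimal every Lyndon word of length $\ge 2$ avoids a terminal $x_0$ and hence lies in $\fA_N^1$ where (iii) (via the bijection $\bfp$) prescribes the coefficient, and pin down the remaining generators $x_0$ and the length-one letters by (ii) and (iii). This is a clean, elementary argument that the paper does not give; what it costs you is the burden of actually verifying existence, i.e.\ the sign and reindexing computation in (iii), which you describe plausibly but do not carry out, whereas the paper outsources it. I would only insist that you make the regularization-matching step explicit in the form the paper uses (two shuffle characters agreeing on admissible words and on $x_1$ must coincide), since as written it is flagged rather than proved.
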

\begin{proof}
The uniqueness, the statements in (i), (ii) and the case $(s_1,\eta_1)\ne(1,1)$ of (iii)
of the theorem follow directly from \cite[App.]{Enriquez2007} and \cite[Prop. 5.17]{Deligne05}.
By Theorem~\ref{theo:Racinet} (ii), if $(s_1,\eta_1)=(1,1)$ then $\zeta \lrpTZ{\bfs}{\bfeta}$
is determined uniquely by the admissible values from the shuffle structure by using (ii). But $\Phi[\bfp(x_0^{s_1-1} x_{\eta_1}\dots  x_0^{s_d-1} x_{\eta_d})]$ is also determined uniquely
by the coefficients of admissible words from the same
shuffle structure so that (iii) still holds even if $(s_1,\eta_1)=(1,1)$. This completes
the proof of the theorem.
\end{proof}

For any $\eta\in\gG_N$, we define the map $r_\eta\colon X_N^*\to X_N^*$ by
setting
\begin{align*}
r_\eta(x_0^{a_1} x_{\eta_1}^{b_1}\dots x_0^{a_d} x_{\eta_d}^{b_d})
:=x_0^{a_1} x_{\eta_1/\eta}^{b_1}\dots x_0^{a_d} x_{\eta_d/\eta}^{b_d}
\end{align*}
for all $a_1,b_1,\dots,a_d,b_d\in\Z_{\ge 0}$ and $\eta_1,\dots,\eta_d\in\gG_N$.

\begin{lem}\label{lem:eta-twistPhi}
For any $\eta\in\gG_N$, the \emph{$\eta$-twist $\Phi_{\eta}$ of $\Phi$} defined by
\begin{align*}
   \Phi_{\eta}:=\sum_{w\in X_N^\ast} \Phi[r_\eta(w)] w
\end{align*}
is group-like for $\gD_\sha$ and $\Phi_{\eta}^{-1}$ is well-defined.
\end{lem}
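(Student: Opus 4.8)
The plan is to realize $\Phi_\eta$ as the image of $\Phi$ under a relabeling automorphism of the completed Hopf algebra and then to inherit group-likeness and invertibility directly from the properties of $\Phi$ recorded in the preceding theorem.

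First I would set up the relabeling at the level of the alphabet. Since multiplication by $\eta$ permutes $\gG_N$, the rule $x_0\mapsto x_0$ and $x_\mu\mapsto x_{\mu/\eta}$ (for $\mu\in\gG_N$) is a bijection of $X_N$, hence induces a monoid automorphism of $X_N^\ast$ which I continue to denote $r_\eta$; it preserves the weight, so it extends to a weight-preserving automorphism of $\CC\langle\!\langle X_N^\ast \rangle\!\rangle$ with inverse $r_{\eta^{-1}}$. I then record the compatibilities that matter. As $\gD_\sha$ is deconcatenation and $r_\eta$ acts letter by letter, a direct comparison on an arbitrary word $w=x_{\gl_1}\cdots x_{\gl_n}$ gives
\begin{align*}
\gD_\sha\big(r_\eta(w)\big)=(r_\eta\otimes r_\eta)\,\gD_\sha(w),
\end{align*}
while obviously $\eps_\sha\circ r_\eta=\eps_\sha$; a routine induction on word length via \textup{(SH1)--(SH2)} moreover shows $r_\eta(u\sha v)=r_\eta(u)\sha r_\eta(v)$, so that $r_\eta$ is in fact an automorphism of the Hopf algebra $(\CC\langle\!\langle X_N^\ast \rangle\!\rangle,\sha,\gD_\sha,\eps_\sha)$.

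Next I would reindex the defining sum: substituting $v=r_\eta(w)$ in $\Phi_\eta=\sum_{w\in X_N^\ast}\Phi[r_\eta(w)]\,w$ gives the clean identity $\Phi_\eta=r_{\eta^{-1}}(\Phi)$. Group-likeness is then immediate, since $r_{\eta^{-1}}$ commutes with $\gD_\sha$, preserves $\eps_\sha$, and $\Phi$ is group-like by the preceding theorem:
\begin{align*}
\gD_\sha(\Phi_\eta)&=(r_{\eta^{-1}}\otimes r_{\eta^{-1}})\,\gD_\sha(\Phi)=(r_{\eta^{-1}}\otimes r_{\eta^{-1}})(\Phi\otimes\Phi)=\Phi_\eta\otimes\Phi_\eta,
\end{align*}
and $\eps_\sha(\Phi_\eta)=\eps_\sha(\Phi)=1$. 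For invertibility I would note that the constant term is $\Phi_\eta[\be]=\Phi[r_\eta(\be)]=\Phi[\be]=1$, so $\Phi_\eta$ is a unit for concatenation; equivalently, as $r_{\eta^{-1}}$ is a concatenation automorphism, $\Phi_\eta^{-1}=r_{\eta^{-1}}(\Phi^{-1})$ is well-defined.

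There is no genuine obstacle: once one observes that $\Phi_\eta$ is nothing but the relabeling $r_{\eta^{-1}}(\Phi)$, both claims reduce to functoriality. The only step deserving care is checking that $r_\eta$ respects the coproduct $\gD_\sha$ (and the shuffle product), which is exactly what lets one transport the group-like property of $\Phi$ to $\Phi_\eta$.
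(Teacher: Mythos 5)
Your proof is correct and is, at its core, the same as the paper's: both arguments reduce everything to the fact that the relabeling $r_\eta$ is compatible with the shuffle Hopf structure. The paper verifies group-likeness at the level of coefficients, computing $\gD_\sha(\Phi_\eta)[u\otimes v]=\Phi_\eta[u\sha v]=\Phi[r_\eta(u)\sha r_\eta(v)]=\Phi[r_\eta(u)]\Phi[r_\eta(v)]=(\Phi_\eta\otimes\Phi_\eta)[u\otimes v]$, which is exactly the dual formulation of your observation that $r_{\eta^{-1}}$ is a Hopf automorphism carrying $\Phi$ to $\Phi_\eta$; the identity $\Phi_\eta=r_{\eta^{-1}}(\Phi)$ is a clean way to package this. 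The invertibility arguments do differ: you invoke the elementary criterion that a series with constant term $1$ is a unit for concatenation in the completed algebra, whereas the paper writes the inverse explicitly via the antipode, $(\Phi_\eta)^{-1}=\sum_{w}(-1)^{|w|}\Phi[r_\eta(w)]\,\revs{w}=(\Phi^{-1})_\eta$, which additionally records that inversion commutes with twisting. One small correction: $\gD_\sha$ on $\CC\langle\!\langle X_N^\ast\rangle\!\rangle$ is \emph{not} deconcatenation (that is $\widetilde{\Delta}_\sha$ on $\fA_N$); it is the coproduct dual to $\sha$, determined by making each letter primitive. Your claimed identity $\gD_\sha(r_\eta(w))=(r_\eta\otimes r_\eta)\gD_\sha(w)$ remains true for the correct coproduct, since $r_\eta$ merely permutes the letters and these are primitive, so the argument survives, but the justification should be restated accordingly.
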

\begin{proof}
For any words $u,v\in X_N^*$, we have
\begin{multline*}
\gD_\sha(\Phi_\eta) [u\otimes v]=\Phi_\eta[ u\sha  v]=\Phi[r_\eta ( u\sha v)]\\
=\Phi[r_\eta (u) \sha r_\eta (v)]
=\Phi[r_\eta (u)]\Phi[r_\eta(v)]
=(\Phi_\eta\otimes\Phi_\eta)[u\otimes v],
\end{multline*}
since $\Phi$ is group-like. Thus $\gD_\sha(\Phi_\eta)=\Phi_\eta\otimes\Phi_\eta$.
Further, $\Phi_{\eta}^{-1}$ is well-defined since
\begin{align*}
   (\Phi_{\eta})^{-1}=\sum_{w\in X_N^\ast} (-1)^{|w|}\Phi[r_\eta(w)] \revs{w}
   =\sum_{w\in X_N^\ast} (-1)^{|w|}\Phi[r_\eta(\revs{w})]w=(\Phi^{-1})_{\eta},
\end{align*}
where $\revs{w}=\ga_d\ga_{d-1}\cdots\ga_1$ is the reversal of the word
$w=\ga_1\cdots \ga_{d-1}\ga_d\in X_N^\ast$ with the letters $\ga_j\in X_N$ for $j=1,\ldots,d$.
\end{proof}

\begin{thm}\label{thm:SCVDrinfeldAss}
For any word $w\in \fA_N^1$, we have
\begin{align*}
 \zeta_\sha^\Sy(w)=(-1)^d\sum_{\eta\in\gG_N} \ol{\eta}\, \Phi_{\eta}^{-1} x_{\eta} \Phi_{\eta} [x_1 w].
\end{align*}
\end{thm}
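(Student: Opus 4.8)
The plan is to evaluate the right-hand side by expanding the product $\Phi_\eta^{-1}x_\eta\Phi_\eta$, reading off the coefficient of the word $x_1 w$, and matching the outcome termwise against the defining sum \eqref{equ:shaSCV}. Write $w=x_0^{s_1-1}x_{\xi_1}\cdots x_0^{s_d-1}x_{\xi_d}\in\fA_N^1$, put $\xi_0:=1$ and $\eta'_i:=\xi_i/\xi_{i-1}$, so that by definition $\zeta_\sha^\Sy(w)=\zeta_\sha^\Sy\Lrp{s_1,\ldots,s_d}{\eta'_1,\ldots,\eta'_d}$. The single bridge I need between the associator and regularized colored MZVs is
\begin{equation*}
\Phi[V]=(-1)^{\dep(V)}\zeta_\sha(V;0)\qquad(V\in\fA_N^1),
\end{equation*}
which follows from property (iii) of the characterization of $\Phi$ together with the bijectivity of $\bfp$ on $\fA_N^1$ (every $V$ equals $\bfp(\bfq(V))$, the non-admissible words being handled by the regularized case of (iii)).

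First I would expand the coefficient by factorizations of the word: for a fixed $\eta\in\gG_N$,
\begin{equation*}
\big(\Phi_\eta^{-1}x_\eta\Phi_\eta\big)[x_1 w]=\sum_{x_1 w=U x_\eta V}\Phi_\eta^{-1}[U]\,\Phi_\eta[V],
\end{equation*}
the sum ranging over occurrences of the letter $x_\eta$ in $x_1 w$. Because $\eta\neq 0$, such an occurrence lands on one of the $d+1$ non-$x_0$ letters of $x_1 w$—the leading $x_1$ or one of $x_{\xi_1},\dots,x_{\xi_d}$—and these positions are distinct even when some indices coincide. Summing over $\eta$ with weight $\ol\eta$ thus collapses to a sum over $j=0,1,\dots,d$, with $\eta=\eta_j$ where $\eta_0:=1$, $\eta_j:=\xi_j$, prefix $U=U_1^{(j)}$ ending just before the $j$-th nonzero letter, and suffix $V=U_3^{(j)}=x_0^{s_{j+1}-1}x_{\xi_{j+1}}\cdots x_0^{s_d-1}x_{\xi_d}$. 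This already reproduces the shape of \eqref{equ:shaSCV} as a sum of $d+1$ products.

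Next I would evaluate the two factors using Lemma \ref{lem:eta-twistPhi} and the bridge. The suffix factor is direct: $\Phi_{\eta_j}[U_3^{(j)}]=\Phi[r_{\xi_j}(U_3^{(j)})]=(-1)^{d-j}\zeta_\sha\Lrp{s_{j+1},\ldots,s_d}{\eta'_{j+1},\ldots,\eta'_d}$, once one checks that dividing the indices by $\xi_j$ makes the successive quotients telescope to $\eta'_{j+1},\dots,\eta'_d$. For the prefix factor I use $\Phi_{\eta_j}^{-1}=(\Phi^{-1})_{\eta_j}$ and the antipode formula $\Phi^{-1}[U]=(-1)^{|U|}\Phi[\revs U]$; since $|U_1^{(j)}|=s_1+\cdots+s_j$ and the reversed, relabelled prefix $\revs{r_{\xi_j}(U_1^{(j)})}$ lies in $\fA_N^1$ with depth $j$, the bridge yields $\Phi_{\eta_j}^{-1}[U_1^{(j)}]=(-1)^{s_1+\cdots+s_j}(-1)^{j}\zeta_\sha\Lrp{s_j,\ldots,s_1}{\ol{\eta'_j},\ldots,\ol{\eta'_1}}$ after a second telescoping of the indices. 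Finally the weight $\ol{\eta_j}=\ol{\xi_j}$ telescopes as $\ol{\xi_j}=\ol{\eta'_1}\cdots\ol{\eta'_j}$, and the signs combine as $(-1)^{j}(-1)^{d-j}=(-1)^d$, canceling the global $(-1)^d$; the $j$-th term then equals the $j$-th summand of \eqref{equ:shaSCV} with parameters $(\eta'_1,\dots,\eta'_d)$, and summing over $j$ gives $\zeta_\sha^\Sy(w)$.

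The main obstacle is purely bookkeeping: keeping straight the three intertwined operations—the $r_\eta$-relabelling inside each twist $\Phi_\eta$, the word reversal produced by the antipode, and the passage between the direct indices $\xi_j$ and the divided indices $\eta'_j$ forced by the definition of $\zeta_\sha^\Sy$ on $\fA_N^1$—together with verifying that each reversed, relabelled prefix genuinely ends in a nonzero letter so that the bridge applies. One should also note that $\Phi$ furnishes only $T=0$ values, which is harmless: the two $\zeta_\sha$-factors above are the $T=0$ specializations of those in \eqref{equ:shaSCV}, and since $\zeta_\sha^\Sy(w)$ is $T$-independent by Proposition \ref{prop:SCVconst}, the identity may be verified at $T=0$. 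No analytic input beyond the characterization of $\Phi$ is needed; the entire content is this combinatorial matching.
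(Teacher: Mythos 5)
Your proposal is correct and follows essentially the same route as the paper's proof: decompose the coefficient of $x_1w$ by the $d+1$ possible positions of $x_\eta$, evaluate the prefix factor via the antipode/reversal identity $\Phi_\eta^{-1}[U]=(-1)^{|U|}\Phi_\eta[\revs{U}]$, reduce both factors to regularized $\zeta_\sha(\,\cdot\,;0)$ values through the $r_\eta$-twist and property (iii) of $\Phi$, and conclude by the $T$-independence of Proposition \ref{prop:SCVconst}. The only cosmetic difference is that you make the telescoping of indices and the bridge $\Phi[V]=(-1)^{\dep(V)}\zeta_\sha(V;0)$ explicit, which the paper leaves implicit in its chain of equalities.
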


\begin{proof}
 First we observe that
 \begin{align}\label{eq:invphi}
  \Phi_{\eta}^{-1}[x_1 x_0^{s_1-1} x_{\eta_1}\cdots x_{\eta_{j-1}} x_0^{s_j-1}]
  = (-1)^{s_1+\cdots+s_j}\Phi_{\eta}[ x_0^{s_j-1} x_{\eta_{j-1}}\cdots x_{\eta_1} x_0^{s_1-1}x_1].
 \end{align}
 for $j=1,\ldots,d$. Then we obtain (by setting $\eta_0:=1$)
 \begin{align*}
    & (-1)^d\sum_{\eta\in\gG_N} \ol{\eta}\, \Phi_{\eta}^{-1} x_{\eta} \Phi_{\eta} \big[x_1 x_0^{s_1-1} x_{\eta_1}\dots x_0^{s_d-1} x_{\eta_d}\big] \\
  = &~ (-1)^d \sum_{j=0}^d \ol{\eta_j}\, \Phi_{\eta_j}^{-1}\big[x_1 x_0^{s_1-1} x_{\eta_1}\cdots x_{\eta_{j-1}}x_0^{s_{j}}\big]  \Phi_{\eta_j}\big[x_0^{s_{j+1}-1}x_{\eta_{j+1}}\cdots x_0^{s_d-1}x_{\eta_d}\big]\\
  = &~ (-1)^d \sum_{j=0}^d (-1)^{s_1+\cdots+s_j}\ol{\eta_j}\, \Phi_{\eta_j}\big[x_0^{s_{j}}x_{\eta_{j-1}}\cdots x_0^{s_1-1} x_1\big] \Phi_{\eta_j}\big[x_0^{s_{j+1}-1}x_{\eta_{j+1}}\cdots x_0^{s_d-1}x_{\eta_d}\big]\\
  = &~ (-1)^d \sum_{j=0}^d (-1)^{s_1+\cdots+s_j}\ol{\eta_j}\, \Phi\big[x_0^{s_{j}}x_{\eta_{j-1}/\eta_j}\cdots x_0^{s_1-1} x_{\eta_0/\eta_j}\big] \Phi\big[x_0^{s_{j+1}-1}x_{\eta_{j+1}/\eta_j}\cdots x_0^{s_d-1}x_{\eta_d/\eta_j}\big]\\
  = &~ (-1)^d \sum_{j=0}^d (-1)^{s_1+\cdots+s_j}\ol{\eta_j}\,
   \cdot (-1)^j \zeta_\sha\LrpTZ{\ s_j\ , \ldots,s_1}
  {\frac{\eta_{j-1}}{\eta_{j}},\ldots,\frac{\eta_0}{\eta_1} }
  %\begin{psmallmatrix}s_j,& s_{j-1},& \ldots,& s_1 \\ \frac{\eta_{j-1}}{\eta_{j}},&  \frac{\eta_{j-2}}{\eta_{j-1}},&  \ldots,&  \eta_{1} \end{psmallmatrix}
  \cdot (-1)^{d-j} \zeta_\sha\LrpTZ{s_{j+1},\ldots,\ s_d\ }
  {\frac{\eta_{j+1}}{\eta_j},\ldots,\frac{\eta_d}{\eta_{d-1}} }\\
  %\begin{psmallmatrix} s_{j+1}, & s_{j+2},& \ldots,& s_d \\ \frac{\eta_{j+1}}{\eta_{j}},& \frac{\eta_{j+2}}{\eta_{j+1}}, &  \ldots,&  \frac{\eta_{d}}{\eta_{d-1}}\end{psmallmatrix}
  = & ~ \zeta_\sha^\Sy \Lrp{s_1,s_2,\ldots,\ \, s_d\ \, }{\eta_1,\frac{\eta_{2}}{\eta_1},\ldots, \frac{\eta_{d}}{\eta_{d-1}}},
 \end{align*}
by Proposition~\ref{prop:SCVconst}. We have completed our proof.
\end{proof}

\begin{thm}\label{thm:shuffleSCV}
Let $N\ge 1$. For any $w,u\in\fA_1^1$ and $v\in\fA_N^1$, we have
\begin{align*}
 \zeta_\sha^\Sy(u\sha v)= \zeta_\sha^\Sy(\tau(u)v).
\end{align*}
These relations are called linear shuffle relations for the SCVs.
\end{thm}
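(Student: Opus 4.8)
The plan is to combine the associator formula for $\zeta_\sha^\Sy$ from Theorem~\ref{thm:SCVDrinfeldAss} with the purely algebraic shuffle identity \eqref{eq:shdecomp} that already did the work in the finite case. First I would reduce the assertion to the single-block relation
\begin{equation*}
\zeta_\sha^\Sy\big((x_0^{s-1}x_1 u)\sha v\big)=(-1)^s\,\zeta_\sha^\Sy\big(u\sha(x_0^{s-1}x_1 v)\big),\qquad s\in\N,\ u\in\fA_1^1,\ v\in\fA_N^1.
\end{equation*}
Indeed, writing $u=(x_0^{s_1-1}x_1)\cdots(x_0^{s_d-1}x_1)$ and applying this relation $d$ times, each time peeling the leftmost block off $u$ and prepending it to $v$, transfers all blocks of $u$ onto $v$ in reversed order and with total sign $(-1)^{s_1+\cdots+s_d}$; since $\be\sha w=w$, the result is exactly $\zeta_\sha^\Sy(\tau(u)v)$, and the case $u=\be$ is trivial. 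Each iteration is legitimate because $\fA_1^1$ and $\fA_N^1$ are closed under $\sha$ and prepending $x_0^{s_i-1}x_1$ keeps a word in $\fA_N^1$.

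The key device is a single element. Set $P:=\sum_{\eta\in\gG_N}\ol\eta\,\Phi_\eta^{-1}x_\eta\Phi_\eta$, so that Theorem~\ref{thm:SCVDrinfeldAss} reads $\zeta_\sha^\Sy(w)=(-1)^{\dep(w)}\,P[x_1 w]$ for all $w\in\fA_N^1$. The crux is that $P$ is \emph{primitive} for $\gD_\sha$: each letter $x_\eta$ is primitive by the definition of $\gD_\sha$, each $\Phi_\eta$ is group-like by Lemma~\ref{lem:eta-twistPhi}, and the conjugate of a primitive element by a group-like element is again primitive, whence $\gD_\sha(P)=P\otimes\be+\be\otimes P$. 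Dually this says $P[p\sha q]=0$ whenever both $p$ and $q$ are nonempty words.

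To prove the one-block relation I would apply the functional $P[-]$ to \eqref{eq:shdecomp} with $a=x_0$, $b=x_1$. Writing $B:=(x_0^{s-1}x_1u)\sha v-(-1)^s\,u\sha(x_0^{s-1}x_1v)$ for the bracket on its left-hand side, \eqref{eq:shdecomp} reads $x_1 B=\sum_{j=0}^{s-1}(-1)^j(x_0^{s-1-j}x_1 u)\sha(x_0^{j}x_1 v)$. On the right each summand is a shuffle of two words both containing the letter $x_1$, hence both nonempty, so primitivity of $P$ kills every term and $P[x_1 B]=0$. On the other hand, every word occurring in $B$ has the same depth $D:=\dep(u)+\dep(v)+1$, so Theorem~\ref{thm:SCVDrinfeldAss} yields $P[x_1 B]=(-1)^{D}\zeta_\sha^\Sy(B)$. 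Comparing the two evaluations forces $\zeta_\sha^\Sy(B)=0$, which is exactly the single-block relation.

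The substantive step is the primitivity of $P$, which is what turns the cross terms of \eqref{eq:shdecomp} into zero; this is where I expect the real content to lie. Granting it, the only care needed is the uniform-depth bookkeeping that lets the sign $(-1)^{\dep}$ be pulled cleanly out of $P[x_1(-)]$, after which the block-by-block reduction finishes the proof.
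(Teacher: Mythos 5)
Your proposal is correct and follows essentially the same route as the paper: the same reduction to the single-block relation, the same shuffle identity \eqref{equ:sumOfShuffles}, and the same key point that the conjugates $\Phi_\eta^{-1}x_\eta\Phi_\eta$ are primitive for $\gD_\sha$ (the paper checks this for each $\eta$ separately rather than for the aggregated functional $P$, a purely cosmetic difference) so that they annihilate shuffles of nonempty words. The only added content is your explicit block-peeling reduction and depth bookkeeping, which the paper leaves implicit.
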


\begin{proof}
It suffices to prove
\begin{align*}
 \zeta_\sha^\Sy( (x_0^{s-1}x_1 u) \sha v) =(-1)^s \zeta_\sha^\Sy(u \sha (x_0^{s-1}x_1 v)).
\end{align*}
for all $s\in\N$. We observe that
\begin{equation}\label{equ:sumOfShuffles}
x_1 \Big( (x_0^{s-1}x_1 u)\sha v-(-1)^s u\sha(x_0^{s-1} x_1 v)\Big)
=\sum_{i=0}^{s-1} (-1)^i (x_0^{s-1-i}x_1 u)\sha(x_0^{i}x_1v).
\end{equation}
By Theorem.~\ref{thm:SCVDrinfeldAss}, it suffices to show that the image of \eqref{equ:sumOfShuffles} under $E:=\Phi_{\eta}^{-1}x_1\Phi_{\eta}$ vanishes
for all $\eta\in\gG_N$. By Lemma~\ref{lem:eta-twistPhi}
\begin{equation*}
\gD_\sha(E)=(\Phi_{\eta}^{-1}\ot\Phi_{\eta}^{-1})
(x_1\ot \be+\be\ot x_1)(\Phi_{\eta}\ot\Phi_{\eta})
=E\ot \be+\be\ot E.
\end{equation*}
Therefore $E$ is a primitive element for $\gD_\sha$ so that
we can regard it as a Lie element, namely, it acts on shuffle products like a derivation.
Hence, for any nonempty words $u,v\in X_N^*$,
\begin{equation*}
E[u \sha v]=E[u]\epsilon_\sha[v]+ \epsilon_\sha[u]E[v]=0.
\end{equation*}
This completes the proof by Theorem~\ref{thm:SCVDrinfeldAss}
since none of the factors in the shuffle products on the
right-hand side of Eq.~\eqref{equ:sumOfShuffles} is the
empty word as the letter $x_1$ appears in every factor.
\end{proof}

%%%%%%%%%%%%%%%%%%%%%%%%%%%%%%%%%%%%%%%%%%%%%%%%%%%%%%%%%%%%%%%%%%%%%%%%%%%%

\section{Reversal relations of FCVs and SCVs}

One of the simplest but very important relations among FCVs and SCVs
are the following reversal relations.
\begin{prop}\label{prop:reversalFCVSCV}
Let $\bfs \in \N^d$, $\bfeta \in (\gG_N)^d$,
and define $\pr(\bfeta):=\prod_{j=1}^d \eta_j$. Then we have
\begin{align}\label{equ:reversalFCV}
\zeta_{\calA(N)} \lrp{\revs{\bfs}}{\revs{\bfeta}}
=&\,(-1)^{|\bfs|} \pr(\ol{\bfeta}) \zeta_{\calA(N)} \lrp{\bfs}{\ol{\bfeta}}, \\
\zeta_\sha^\Sy\lrp{\revs{\bfs}}{\revs{\bfeta}}
=&\,(-1)^{|\bfs|} \pr(\ol{\bfeta}) \zeta_\sha^\Sy\lrp{\bfs}{\ol{\bfeta}},\label{equ:reversalSCVsha}\\
\zeta_\ast^\Sy\lrp{\revs{\bfs}}{\revs{\bfeta}}
=&\,(-1)^{|\bfs|} \pr(\ol{\bfeta}) \zeta_\ast^\Sy\lrp{\bfs}{\ol{\bfeta}},\label{equ:reversalSCVast}
\end{align}
where $\revs{\bfa}:=(a_d,\dots,a_1)$ is the reversal of $\bfa:=(a_1,\ldots,a_d)$ and $\ol{\bfa}:=(\ol{a_1},\dots,\ol{a_d})$ is the componentwise conjugation of $\bfa$.
\end{prop}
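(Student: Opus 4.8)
The plan is to treat the three relations separately: I would prove the FCV relation \eqref{equ:reversalFCV} prime-by-prime via an index substitution, and then deduce \eqref{equ:reversalSCVsha} and \eqref{equ:reversalSCVast} by a purely formal reindexing of their defining sums.

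For the FCV relation \eqref{equ:reversalFCV}, I would fix a prime $p\in\calP(N)$ and work in the $p$-component $\F_p[\xi_N]$ of the sum \eqref{equ:defnFCV} defining $\zeta_{\calA(N)}\lrp{\revs{\bfs}}{\revs{\bfeta}}$. The key move is the substitution $k_i\mapsto p-l_{d+1-i}$, which reverses the chain $p>k_1>\cdots>k_d>0$ into $p>l_1>\cdots>l_d>0$. The denominator transforms termwise by $(p-l_m)^{s_m}\equiv(-1)^{s_m}l_m^{s_m}\pmod{p}$, producing the global sign $(-1)^{|\bfs|}$. The numerator $\prod_m \eta_m^{p-l_m}$ is the crucial point: since $p\equiv-1\pmod N$, Fermat's Little Theorem makes the Frobenius act as conjugation \eqref{equ:pPower=inv}, so $\eta_m^{p}=\ol{\eta_m}$, and together with $\eta_m^{-l_m}=\ol{\eta_m}^{\,l_m}$ this rewrites the numerator as $\pr(\ol{\bfeta})\prod_m\ol{\eta_m}^{\,l_m}$. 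Collecting these, the $p$-component equals $(-1)^{|\bfs|}\pr(\ol{\bfeta})$ times the $p$-component of $\zeta_{\calA(N)}\lrp{\bfs}{\ol{\bfeta}}$; as $p\in\calP(N)$ was arbitrary this gives \eqref{equ:reversalFCV} in $\calA(N)$.

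For \eqref{equ:reversalSCVsha}, I would expand $\zeta_\sha^\Sy\lrp{\revs{\bfs}}{\revs{\bfeta}}$ directly from \eqref{equ:shaSCV} and reindex the outer summation by $j\mapsto d-j$. Under this reindexing the two regularized factors attached to the $i$-th term become $\zeta_\sha\lrpT{s_{i+1},\dots,s_d}{\ol{\eta_{i+1}},\dots,\ol{\eta_d}}$ and $\zeta_\sha\lrpT{s_i,\dots,s_1}{\eta_i,\dots,\eta_1}$, which are exactly the two factors appearing in the $i$-th term of $\zeta_\sha^\Sy\lrp{\bfs}{\ol{\bfeta}}$ once one uses $\ol{\ol{\eta_j}}=\eta_j$. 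It then remains only to compare the scalar prefactors: the sign matches because $|\bfs|+(s_1+\cdots+s_i)\equiv s_{i+1}+\cdots+s_d\pmod 2$, and the root-of-unity factor matches because $\pr(\ol{\bfeta})\,\eta_1\cdots\eta_i=\ol{\eta_{i+1}}\cdots\ol{\eta_d}$, both being immediate from $\ol{\eta_j}\eta_j=1$. Matching term by term yields \eqref{equ:reversalSCVsha}. Since \eqref{equ:astSCV} has precisely the same combinatorial shape with $\zeta_\ast$ in place of $\zeta_\sha$, the identical reindexing proves \eqref{equ:reversalSCVast}.

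The only genuinely nontrivial input is the identity $\eta_m^{p}=\ol{\eta_m}$ in the FCV computation: it is exactly the defining congruence $p\equiv-1\pmod N$ of $\calP(N)$ that forces the $p$-power (Frobenius) map to act as conjugation, and hence makes the reversed sum close up inside $\calA(N)$ with the conjugated arguments $\ol{\bfeta}$. I expect the remaining difficulty to be purely bookkeeping—keeping the two index reversals (of the summation variables $k_i$ and of the outer index $j$) consistent with the reversal $\revs{(\cdot)}$ and the conjugation $\ol{(\cdot)}$—with no conceptual obstruction.
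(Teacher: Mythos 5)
Your proposal is correct and follows essentially the same route as the paper: the FCV identity via the substitution $k_j\to p-k_j$ combined with $p\equiv -1\pmod N$ (which is exactly why the conjugated arguments $\ol{\bfeta}$ appear), and the two SCV identities by reindexing $j\mapsto d-j$ in the defining sums \eqref{equ:astSCV} and \eqref{equ:shaSCV}. The paper merely asserts these steps ``follow easily,'' so your write-up supplies precisely the bookkeeping it omits, with no gaps.
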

\begin{proof}
Equation \eqref{equ:reversalFCV} follows easily from the substitution $k_j\to p-k_j$
for the indices in \eqref{equ:defnFCV} by the condition $p\equiv -1\pmod{N}$.
Equations~\eqref{equ:reversalSCVsha}
and \eqref{equ:reversalSCVast} follow easily from the definitions.
\end{proof}

%%%%%%%%%%%%%%%%%%%%%%%%%%%%%%%%%%%%%%%%%%%%%%%%%%%%%%%%%%%%%%%%%%%%%%%%%%%%

\section{Numerical examples}\label{sec:numex}
In this last section, we provide some numerical examples in support of
Conjecture~\ref{conj:Main}.
We will need some results from the level 1 case.
\begin{prop}\label{prop:homogeneousWols1} \emph{(\cite[Theo.~2.13]{Zhao08b})}
Let $s$, $d$ and $N$ be positive integers. Then
\begin{align}\label{equ:homogeneousWols}
\zeta_{\calA(N)}\Lrp{\{s\}^d}{\{1\}^d}=0.
\end{align}
\end{prop}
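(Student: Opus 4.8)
The plan is to reduce the claim to the vanishing of an elementary symmetric polynomial over $\F_p$ and then to kill it using vanishing power sums together with Newton's identities. Since every $\eta_j=1$, all numerators in \eqref{equ:defnFCV} are $1$, so the $p$-component of $\zeta_{\calA(N)}\Lrp{\{s\}^d}{\{1\}^d}$ lies in the prime field $\F_p\subset\F_p[\xi_N]$ and is congruent modulo $p$ to
\[
H_{p-1}(\{s\}^d)=\sum_{p>k_1>\cdots>k_d>0}\frac{1}{k_1^s\cdots k_d^s}
=e_d\big(1^{-s},2^{-s},\dots,(p-1)^{-s}\big),
\]
the $d$-th elementary symmetric polynomial in the inverse $s$-th powers of the residues $1,\dots,p-1$ (inverses taken in $\F_p$). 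It therefore suffices to show this symmetric polynomial vanishes modulo $p$ for all but finitely many $p\in\calP(N)$.

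First I would compute the associated power sums $P_j:=\sum_{k=1}^{p-1}k^{-sj}$ in $\F_p$ for $j=1,\dots,d$. By the classical power-sum congruence (an immediate consequence of Fermat's Little Theorem), $\sum_{k=1}^{p-1}k^{m}\equiv -1\pmod p$ when $(p-1)\mid m$ and $\equiv 0\pmod p$ otherwise; hence $P_j\equiv 0\pmod p$ whenever $(p-1)\nmid sj$. Choosing $p>sd+1$ forces $0<sj\le sd<p-1$ for every $j\le d$, so all of $P_1,\dots,P_d$ vanish modulo $p$.

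Next I would invoke Newton's identity $d\,e_d=\sum_{i=1}^{d}(-1)^{i-1}e_{d-i}P_i$ (with $e_0:=1$), which holds with $\Q$-coefficients and so may be reduced modulo any prime $p>d$. Since every $P_i$ appearing on the right vanishes modulo $p$, we obtain $d\,e_d\equiv 0\pmod p$, and because $p\nmid d$ this gives $e_d\equiv 0\pmod p$. Thus $H_{p-1}(\{s\}^d)\equiv 0\pmod p$ for every $p\in\calP(N)$ with $p>sd+1$, and since $\calA(N)$ disregards any fixed finite set of primes the corresponding class is $0$, which is precisely \eqref{equ:homogeneousWols}.

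The only thing to watch is the interplay of the two lower bounds on $p$: the power sums vanish once $p>sd+1$, while the inversion of the factor $d$ in Newton's identity needs $p>d$; both exclude only finitely many primes and are thus invisible in $\calA(N)$. I would also remark that the subcase in which $sd$ is odd is immediate from the reversal relation \eqref{equ:reversalFCV} with $\bfs=\{s\}^d$ and $\bfeta=\{1\}^d$, which yields $\zeta_{\calA(N)}\lrp{\{s\}^d}{\{1\}^d}=(-1)^{sd}\zeta_{\calA(N)}\lrp{\{s\}^d}{\{1\}^d}$; the symmetric-function argument above is what additionally settles the even case.
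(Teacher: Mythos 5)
Your argument is correct. Note that the paper itself gives no proof of this proposition---it is quoted verbatim from the cited reference \cite[Theo.~2.13]{Zhao08b}---so there is nothing internal to compare against; what you have written is the standard self-contained proof of the mod $p$ vanishing. The three ingredients all check out: the identification of the $p$-component with the elementary symmetric polynomial $e_d$ of the inverse $s$-th powers (using that inversion permutes $\F_p^\times$, so the power sums $P_j$ reduce to $\sum_{k=1}^{p-1}k^{sj}$), the classical power-sum congruence forcing $P_1,\dots,P_d\equiv 0$ once $p>sd+1$, and Newton's identity $d\,e_d=\sum_{i=1}^d(-1)^{i-1}e_{d-i}P_i$ killing $e_d$ for $p>d$. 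Discarding the finitely many small primes is harmless in $\calA(N)$, and your closing observation that the odd-weight case already follows from the reversal relation \eqref{equ:reversalFCV} is a nice (if redundant) cross-check.
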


\begin{eg}\label{eg:level3}
At level 3, by Proposition~\ref{prop:reversalFCVSCV} and
Proposition~\ref{prop:homogeneousWols1}, for all $w\in \N$, we have 
\begin{alignat*}{3}
     \zeta_{\calA(3)}\lrp{w}{1}=&\, 0, \quad &\zeta_{\calA(3)}\lrp{w,w}{1,1}=&\, 0, \quad &  \zeta_{\calA(3)}\lrp{w}{\xi_3}=&\, \xi_3^2 (-1)^w \zeta_{\calA(3)}\lrp{w}{\xi_3^2},\\
     \zeta_\ast^\Sy\lrp{w}{1}=&\, \gd_w\zeta(w), \quad &\zeta_\ast^\Sy\lrp{w,w}{1,1}=&\,
     \gd_w\zeta(w)^2-\zeta(2w), \quad &   \zeta_\ast^\Sy\lrp{w}{\xi_3}=&\, \xi_3^2(-1)^w \zeta_\ast^\Sy\lrp{w}{\xi_3^2},
\end{alignat*}
where $\gd_w= (1+(-1)^w)$,  and
\begin{alignat*}{3}
     \zeta_{\calA(3)}\lrp{w,w}{1,\xi_3}=&\, \xi_3^2  \zeta_{\calA(3)}\lrp{w,w}{\xi_3^2,1}, \quad &
     \zeta_\ast^\Sy\lrp{w,w}{1,\xi_3}=&\, \xi_3^2 \zeta_\ast^\Sy\lrp{w,w}{\xi_3^2,1}, \\   \zeta_{\calA(3)}\lrp{w,w}{1,\xi_3^2}=&\,\xi_3\zeta_{\calA(3)}\lrp{w,w}{\xi_3,1},\quad &  \zeta_\ast^\Sy\lrp{w,w}{1,\xi_3^2}=&\,\xi_3 \zeta_\ast^\Sy\lrp{w,w}{\xi_3,1}, \\     \zeta_{\calA(3)}\lrp{w,w}{\xi_3,\xi_3}=&\,\xi_3\zeta_{\calA(3)}\lrp{w,w}{\xi_3^2,\xi_3^2},\quad &
      \zeta_\ast^\Sy\lrp{w,w}{\xi_3,\xi_3}=&\,\xi_3 \zeta_\ast^\Sy\lrp{w,w}{\xi_3^2,\xi_3^2}. 
\end{alignat*} 
\end{eg}

\begin{eg}\label{eg:level4} 
At level 4, by \cite[Cor.\ 2.3]{Tauraso10}, we have
\begin{align}
     \zeta_{\calA(4)}\lrp{w}{1}=0, \quad
     \zeta_{\calA(4)}\lrp{w}{i^2}=
\left\{
  \begin{array}{ll}
    0, & \hbox{if $w$ is even;} \\
-2q_2 , & \hbox{if $w=1$;} \\
 -2(1-2^{1-w}) \gb_w, & \hbox{otherwise,}
  \end{array}
\right.  \notag\\
     \zeta_{\calA(4)}\lrp{w}{i}=\sum_{p>k>0} \frac{i^k}{k^w} =E^{(w)}_p+iO^{(w)}_p,\quad
     \zeta_{\calA(4)}\lrp{w}{i^3}=\sum_{p>k>0} \frac{i^{3k}}{k^w}=E^{(w)}_p-iO^{(w)}_p,\label{equ:depth1N=4}
\end{align}
where $q_2:=((2^{p-1}-1)/p )_{p\in\calA(4)}$ is the $\calA(4)$-Fermat quotient, $\gb_w:=(B_{p-w}/w)_{p\in\calA(4),p>w}$ is the  $\calA(4)$-Bernoulli number and
\begin{align*}
E^{(w)}_p:=\sum_{p>2k>0}\frac{(-1)^k}{(2k)^w},\quad \quad O^{(w)}_p:=\sum_{p>2k+1>0}\frac{(-1)^k}{(2k+1)^w}.
\end{align*}  
To find more relations, first we observe that
\begin{align*}
\sum_{k=(p-1)/2}^{p-1} \frac{(-1)^k} k
\equiv \sum_{p-k=(p-1)/2}^{p-1} \frac{(-1)^{p-k}}{p-k}
\equiv  \sum_{p>2k>0}\frac{(-1)^k}k\equiv \frac12E^{(1)}_p  \pmod{p}.
\end{align*}
By Eq.~\eqref{equ:depth1N=4}, we have
\begin{align*}
\zeta_{\calA(4)}\lrp{1}{i^2}=2 \zeta_{\calA(4)}\lrp{1}{i}+2 \zeta_{\calA(4)}\lrp{1}{i^3}.
\end{align*}
This is consistent with Conjecture~\ref{conj:Main} since 
\begin{align}\label{equ:piInSCV14}
 \zeta_\ast^\Sy\lrp{1}{i^2}=
 2\zeta_\ast^\Sy\lrp{1}{i}+2\zeta_\ast^\Sy\lrp{1}{i^3}+2\pi
 \equiv
 2\zeta_\ast^\Sy\lrp{1}{i}+2\zeta_\ast^\Sy\lrp{1}{i^3} \pmod{2\pi i \Q(i)}.
\end{align} 
Indeed, we have
\begin{align*}
 \zeta_\ast^\Sy\lrp{1}{i^2}=&\, \Li_1(i^2)-\ol{i^2} \Li_1(\ol{i^2})=2\Li_1(-1)=-2\log(2),\\
 \zeta_\ast^\Sy\lrp{1}{i}=&\, \Li_1(i)+i \Li_1(i^3)=-\log(1-i)-i\log(1+i)
 =-\frac12\log 2+\frac{\pi i}{2}-\frac{i}{2}\log 2+\frac{\pi}{2},\\
 \zeta_\ast^\Sy\lrp{1}{i^3}=&\, \Li_1(i^3)-i \Li_1(i)=-\log(1+i)+i\log(1-i)
 =-\frac12\log 2-\frac{\pi i}{2}+\frac{i}{2}\log 2+\frac{\pi}{2}.
\end{align*}
\end{eg}

{}From numerical evidence, we form the following conjecture:
\begin{conj}\label{conj:level4FCVbasis}
Let $w\ge 1$. When the level $N=3,4$, the $\Q(\xi_N)$-vector space $\FCV_{w,N}$ has
the following basis:
\begin{align}\label{equ:BasisN=3,4}
    \left\{ \zeta_{\calA(N)}\Lrp{\{1\}^w}{\xi_N,\xi_N^{\gd_2},\dots,\xi_N^{\gd_w}}\colon \gd_2,\dots,\gd_{w}\in \{0,1\} \right\}.
\end{align}
\end{conj}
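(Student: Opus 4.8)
The plan is to prove the statement in two halves: that the $2^{w-1}$ displayed values \emph{span} $\FCV_{w,N}$ over $\Q(\xi_N)$, and that they are \emph{linearly independent}. The spanning half is the one I would expect to be reachable from the relations already established in the paper; the independence half is the genuine obstacle and is precisely why the assertion is stated as a conjecture rather than a theorem.

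\textbf{Spanning.} I would argue by induction on the weight $w$, reducing an arbitrary FCV $\zeta_{\calA(N)}\lrp{\bfs}{\bfeta}$ of weight $w$ to a $\Q(\xi_N)$-combination of the normal-form values in three steps. \emph{(a)}~First eliminate the letters $x_0$: a word $y_{s,\eta}=x_0^{s-1}x_\eta$ with $s>1$ carries weight $s$ but depth $1$, whereas the target words $x_{\eta_1}\cdots x_{\eta_w}$ have depth equal to the weight. The double shuffle relations---the stuffle relations of Theorem~\ref{thm:stuffleFCV}, whose contraction term changes the depth, together with the linear shuffle relations of Theorem~\ref{thm:shuffleFCV}, which rearrange the $x_0$-letters---should, by an induction on the number of $x_0$'s, express every weight-$w$ value through the all-index-one, depth-$w$ values $\zeta_{\calA(N)}\lrp{\{1\}^w}{\bfeta}$ with $\bfeta\in(\gG_N)^w$ arbitrary. \emph{(b)}~Next normalize the leading root of unity to $\eta_1=\xi_N$ and discard the values with $\eta_1=1$, using the reversal relation \eqref{equ:reversalFCV} of Proposition~\ref{prop:reversalFCVSCV} (which trades $\bfeta$ for $\ol{\bfeta}$) together with the vanishing $\zeta_{\calA(N)}(\{s\}^d;\{1\}^d)=0$ of Proposition~\ref{prop:homogeneousWols1}. \emph{(c)}~Finally collapse the remaining subscripts onto $\eta_2,\dots,\eta_w\in\{1,\xi_N\}$ by the same double shuffle and reversal relations, which for the small groups $\gG_3,\gG_4$ should leave exactly the $2^{w-1}$ words in the statement. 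The level-$3$ and level-$4$ tables of Example~\ref{eg:level3} and Example~\ref{eg:level4} are exactly the base cases $w\le 2$ of this reduction and indicate that the relations do suffice in low weight.

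\textbf{Linear independence.} This is the hard part, and I do not expect an unconditional argument with present techniques: even at level $N=1$ the analogous lower bound for finite MZVs is open. The only credible route I see is indirect, through Conjecture~\ref{conj:Main}(ii): one would transport the independence question across the conjectural $\Q(\gG_N)$-algebra isomorphism $f\colon \FCV_{w,N}\overset{\sim}{\lra}\CMZV_{w,N}/2\pi i\,\CMZV_{w-1,N}$ and then appeal to Deligne's theorem that for $N=3,4$ the pro-unipotent fundamental Lie algebra of $\GG_m\setminus\gG_N$ is free. Since multiplication by $2\pi i$ is injective on these genuine complex numbers, the Hilbert series coming from freeness gives $\dim_{\Q(\xi_N)}\CMZV_{w,N}=2^{w}$ and hence $\dim(\CMZV_{w,N}/2\pi i\,\CMZV_{w-1,N})=2^{w}-2^{w-1}=2^{w-1}$, matching the cardinality of the proposed basis. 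This confirms consistency but is circular as a proof, since it presupposes Conjecture~\ref{conj:Main}. A direct attack would instead try to express the basis values through provably independent arithmetic invariants in $\calA(N)$---Fermat quotients and Bernoulli-number families such as the $q_2$ and $\gb_w$ of Example~\ref{eg:level4}---but no method general enough in $w$ is currently available, and it is here that the difficulty is concentrated.
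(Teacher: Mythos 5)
The statement you are trying to prove is labelled a \emph{conjecture} in the paper, and the paper contains no proof of it: the authors' entire evidence consists of (a) verifying, by explicit computation with the linear shuffle relations (Theorem \ref{thm:shuffleFCV}), the linear stuffle relations, and the reversal relations (Proposition \ref{prop:reversalFCVSCV}), that the displayed set is a \emph{generating} set for $1\le w\le 4$ and $N=3,4$, and (b) numerical consistency checks; they state explicitly that they ``cannot show their linear independence at the moment.'' So there is no proof to match your attempt against, and your attempt, as you yourself concede, is not one either. Within that frame your account is faithful: the toolkit you propose for spanning is exactly the one the paper uses, and your identification of independence as the true obstruction is correct.

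The concrete gaps are these. In the spanning half, step (a) -- that an induction on the number of $x_0$'s, driven by the double shuffle relations, reduces every weight-$w$ FCV to depth-$w$ values $\zeta_{\calA(N)}\lrp{\ones^w}{\bfeta}$ -- is asserted, not proved; this is precisely the reduction the authors could only push through computationally up to $w=4$, and no general argument is known that the \emph{linear} shuffle and stuffle relations suffice for all $w$. Step (b) also needs more care than you give it: Proposition \ref{prop:homogeneousWols1} only kills the fully homogeneous values $\zeta_{\calA(N)}\lrp{\{s\}^d}{\ones^d}$, so words with $\eta_1=1$ but mixed colors cannot simply be ``discarded''; in the paper's Example \ref{eg:level3} they are instead moved by reversal, and handling this uniformly in depth is part of what is missing. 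In the independence half your reasoning is, as you say, circular: it presupposes Conjecture \ref{conj:Main}(ii), and the resulting count $\dim_{\Q(\xi_N)}\bigl(\CMZV_{w,N}/2\pi i\,\CMZV_{w-1,N}\bigr)=2^{w-1}$ is only a consistency check (it does agree with the paper's closing observation that level-$4$ FCVs of weight $w$ appear to span a $2^w$-dimensional space over $\Q$, since $[\Q(\xi_N):\Q]=2$). In short: your proposal is a reasonable road map and an accurate diagnosis of why the statement is a conjecture, but neither half constitutes a proof, and the paper offers none.
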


%%%%%%%%%%%%%%%%%%%%%%%%%%%%%%%%%%%%%%%%%%%%%%%%%%%%%%%%%%%%%%%%%%%%%%%%%%%%%%%%%%%%%

To find as many $\Q(\gG_N)$-linear relations as possible in weight $w$
we may choose all the known relations in weight $k<w$, multiply
them by $\displaystyle \zeta_{\calA(N)}\lrp{\bfs}{\bfeta}$ for all $\bfs$ of weight $w-k$
and all $\bfeta$, and then expand all the products
using the stuffle relation proved in Theorem \ref{thm:stuffleFCV}. All the $\Q(\gG_N)$-linear
relations among FCVs of the same weight produced in this way are
called \emph{linear stuffle relations of FCVs}.
We can similarly define linear stuffle relations of SCVs

By using  linear  shuffle and stuffle relations and the reversal relations we can show that
Eq.~\eqref{equ:BasisN=3,4} in Conjecture~\ref{conj:level4FCVbasis} are generating sets
in the cases $1\le w\le 4$ and $N=3,4$ but cannot show their linear independence at the moment.

Concerning Conjecture~\ref{conj:Main} (i), the inclusion $\nSCV_{w,N}\subseteq \CMZV_{w,N}$ is
trivial but the opposite inclusion seems difficult. Note that since $2\pi i\in \SCV_{1,4}$ by
Eq.~\eqref{equ:piInSCV14} we have $\nSCV_{w,4}=\SCV_{w,4}$ for all $w$
by the stuffle relations. But from Example~\ref{eg:level3} we see that $\SCV_{1,3}$
is generated by
\begin{align*}
\zeta_\ast^\Sy\lrp{1}{\xi_3}=\zeta\lrp{1}{\xi_3}-\xi_3^2\zeta\lrp{1}{\xi_3^2}
\equiv (1-\xi_3^2)\zeta\lrp{1}{\xi_3} \pmod{2\pi i \Q(\xi_3)},
\end{align*}
which should imply that
$\SCV_{1,3}=\Big\langle \zeta\lrp{1}{\xi_3} \Big\rangle \ne
\nSCV_{1,3}$, since conjecturally $\zeta\lrp{1}{\xi_3}=(\pi i-3\log 3)/6$
and $\pi$ are algebraically independent.

Moreover, when the weight $w\le 3$ and the level $N=3,4$ we have numerically
verified in both spaces of Conjecture~\ref{conj:Main} (ii), exactly the same
linear relations leading to the dimension upper bound $2^{w-1}$ hold
(with error bounded by $10^{-99}$ for SCVs
and with congruence checked for all primes $p<312$ and $p=1019$ in $\calP(N)$ for FCVs).

\begin{eg}\label{eg:higherWtlevel3,4}
In weight 2 level 3 and 4, we can prove rigorously that
\begin{align*}
3\zeta_{\calA(3)}\lrp{2}{\xi_3}=&\, 2\zeta_{\calA(3)}\lrp{1,1}{\xi_3,\xi_3}(1-\xi_3)
-6\zeta_{\calA(3)}\lrp{1,1}{\xi_3,1},\\
3\zeta_\ast^\Sy\lrp{2}{\xi_3}=&\, 2\zeta_\ast^\Sy\lrp{1,1}{\xi_3,\xi_3}(1-\xi_3)
-6\zeta_\ast^\Sy\lrp{1,1}{\xi_3,1}+\frac{(2\pi i)^2}{12}(2+\xi_3),\\
\zeta_{\calA(4)}\lrp{2}{i}=&\, (i-1)\zeta_{\calA(4)}\lrp{1,1}{i,1}-i\zeta_{\calA(4)}\lrp{1,1}{i,i}, \\
\zeta_\ast^\Sy\lrp{2}{i}=&\, (i-1)\zeta_\ast^\Sy\lrp{1,1}{i,1}-i\zeta_\ast^\Sy\lrp{1,1}{i,i}
+ \frac{2\pi i}{12} \Big( 2(1+i)\zeta_\ast^\Sy\lrp{1}{i}-i\zeta_\ast^\Sy\lrp{1}{-1} \Big).
\end{align*}
In weight 3 level $N=3$ and 4, we have verified numerically
\begin{align*}
\zeta_{\calA(3)}\lrp{1,1,1}{1,\xi_3^2,\xi_3}=&\,3\xi_3\zeta_{\calA(3)}\lrp{1,1,1}{\xi_3,1,1},\\
\zeta_\ast^\Sy\lrp{1,1,1}{1,\xi_3^2,\xi_3}=&\,3\xi_3\zeta_\ast^\Sy\lrp{1,1,1}{\xi_3,1,1}+
\frac{(2\pi i)^2}{24}(1-\xi_3)\zeta_\ast^\Sy\lrp{1}{\xi_3}-\frac{(2\pi i)^3}{144}(1-\xi_3),
\end{align*}
and
\begin{align*}
(15-66i)\zeta_{\calA(4)}\lrp{1,2}{1,1}=&\, 48\Big(
(1+i)\zeta_{\calA(4)}\lrp{1,1,1}{i,i,1}
-(1+2i)\zeta_{\calA(4)}\lrp{1,1,1}{i,i,i} \\
+&\,2(1-i)\zeta_{\calA(4)}\lrp{1,1,1}{i,1,i}
-3(1-i)\zeta_{\calA(4)}\lrp{1,1,1}{i,1,1} \Big),
\end{align*}
while
\begin{multline*}
(15-66i)\zeta_\ast^\Sy\lrp{1,2}{1,1}=48\Big(
(1+i)\zeta_\ast^\Sy\lrp{1,1,1}{i,i,1}
-(1+2i)\zeta_\ast^\Sy\lrp{1,1,1}{i,i,i}\\
+2(1-i)\zeta_\ast^\Sy\lrp{1,1,1}{i,1,i}
-3(1-i)\zeta_\ast^\Sy\lrp{1,1,1}{i,1,1}\Big)\\
+2\pi i \Big[
(106+2i)\zeta_\ast^\Sy\lrp{1,1}{i,i}
-(2-88i) \zeta_\ast^\Sy\lrp{1,1}{i,1}
-(\frac{3}2-11i)\zeta_\ast^\Sy\lrp{1,1}{-1,-1}
-(64+26i)\zeta_\ast^\Sy\lrp{1,1}{i,-1} \Big].
\end{multline*}
\end{eg}

We end this paper by an intriguing mystery. During our Maple computation, we found
that FCVs should have an interesting structure over $\Q$. For example, numerical
evidence suggests that all the weight $w$ and level 4 FCVs should generate a
dimension $2^w$ vector space over $\Q$.  We wonder how to relate this $\Q$ structure
to that of the CMZVs at level 4.

\bibliographystyle{alpha}
\bibliography{library}

\end{document}